\numberwithin{equation}{subsection}
\newtheorem{theorem}{Theorem}[section]
\newtheorem{proposition}[theorem]{Proposition}
\newtheorem{corollary}[theorem]{Corollary}
\newcommand{\X}{{\mathfrak X}}
\renewcommand{\H}{{\mathcal H}}
\def\al{\alpha}
\def\be{\beta}
\def\DE{\Delta}
\def\de{\delta}
\def\rh{\rho}
\def\et{\eta}
\def\ga{\gamma}
\def\GA{\Gamma}
\def\OM{\Omega}
\def\ve{\varepsilon}
\def\la{\lambda}
\def\OM{\Omega}
\def\om{\omega}
\def\va{\varphi}
\def\ta{\tau}
\def\cc{\mathfrak{c}}
\def\g{\mathfrak{g}}
\def\h{\mathfrak{h}}
\def\k{\mathfrak{k}}
\def\q{\mathfrak{q}}
\def\p{\mathfrak{p}}
\def\m{\mathfrak{m}}
\def\ll{\mathfrak{l}}
\def\x{\mathfrak x}
\def\ga{\gamma}
\def\la{\lambda}
\def\ve{\varepsilon}
\def\si{\sigma}
\def\om{\omega}
\def\ep{\epsilon}
\def\ph{\phi}
\def\ch{\chi}
\def\ta{\tau}
\def\ps{\psi}
\def\Om{\Omega}
\def\PH{\Phi}
\def\ol#1{\overline{#1}}
\def\nn{\nonumber}
\def\R{{\mathbb R}}
\def\C{{\mathbb C}}
\def\N{{\mathbb N}}
\def\Z{{\mathbb Z}}
\def\Id{{\mathbb I}}
\def\A{{\mathcal A}}
\def\B{{\mathcal B}}
\def\F{{\mathcal F}}
\def\H{{\mathcal H}}
\def\K{{\mathcal K}}
\def\S{{\mathcal S}}
\def\ZZ{{\mathcal Z}}
\def\XX{{\mathcal X}}
\def\Ad{{\text Ad}}
\def\dis{\text{dis}}
\def\iy{\infty}
\def\wh{\widehat}
\def\ol#1{\overline{#1}}
\def\ul#1{\underline{#1}}
\def\hb#1{\hbox{#1}}
\def\val#1{\vert #1\vert}
\def\no#1{\Vert #1\Vert }
\def\span#1{\text{span}\{#1\}}
\def\ker#1{\hb{ker}(#1)}
\def\res#1{_{\vert #1}}
\def\inv{^{-1}}
\def\es{\emptyset}
\def\hb #1{\hbox{#1}}
\def\hb#1{\hbox{#1}}
\def\val#1{\vert #1\vert}
\def\ti{\times}
\def\ker#1{\hb{ker}(#1)}
\def\noop#1{\Vert #1\Vert_{\rm op}}
\def\ind{{\rm ind} }
\def\L1#1{L^1(#1)}
\def\L#1#2{L^{#1}(#2)}
\def\l#1#2{L^{#1}(#2)}
\def\lef({\left(}
\def\rig){\right)}
\def\ci{\circ}
\newtheorem{definition}[theorem]{Definition}
\newtheorem{remark}[theorem]{Remark}
\def\Ad{\mathrm{\, Ad \,}}
\def\k{\mathfrak k}
\begin{document}
\title{The solvable Lie group $N_{6,28}$: an example of an almost $C_0(\K) $-C*-algebra}
\author{J. Inoue, Y.-F. Lin and J. Ludwig}

%\keywords{$C^*$-algebra, exponential Lie group, algebra of operator fields, unitary dual}

%\thanks{This research is supported in part by Taiwan NSC101-2918-I-259-001 grant}

\maketitle

\begin{abstract}
Motivated by the description of the C*-algebra of
the affine automorphism group $N_{6,28}$ of the Siegel upper half-plane of degree 2
as an algebra of operator fields defined over the unitary dual $\wh{N_{6,28}}$ of the group, we introduce a family of C*-algebras, which we call almost $C_0(\K)$, and we show that the C*-algebra of the group $N_{6,28}$ belongs to this class.
\end{abstract}

\section{Introduction}\label{intro}

In order to analyze  a C*-algebra $ \A $, one can use
the Fourier transform $ \F $, which allows us to decompose $ \A $ over its
unitary spectrum $ \widehat \A $. To be able to define this transform, consider the
algebra $ l^\iy(\widehat \A) $ of all bounded operator fields over $ \widehat \A $ defined by
\begin{eqnarray}\label{}
 \nn  l^\iy(\widehat \A):=\{A=(A(\pi)\in\B(\H_\pi))_{\pi\in\widehat \A}; \no A_\iy
:=\sup_{\pi\in \widehat \A}\noop {A(\pi)}<\iy\},
\end{eqnarray}
where $ \H_\pi $ is the Hilbert space of $\pi$.
The space $ l^\iy(\widehat \A) $ is a (huge) C*-algebra itself. The Fourier
transform $ \F $ defined by
\begin{eqnarray}\label{}
\nn \F(a)=\hat a:=(\pi(a))_{\pi\in\widehat \A} \quad \text{for} \quad a\in \A
\end{eqnarray}
is then an injective, hence isometric, homomorphism from $ \A $ into $ l^\iy(\widehat
\A) $. The problem is now to recognize the elements of $ \F(\A) $ inside this big
algebra $ l^\iy(\widehat \A) $.

Recall that a Lie group $ G $ is called \textit{exponential} if it is a connected, simply connected solvable Lie group for which the
exponential mapping $\text{exp}:\g \to G$ from the Lie algebra $ \g $ to its
Lie group $ G $ is a diffeomorphism.  The
Kirillov-Bernat-Vergne-Pukanszky-Ludwig-Leptin theory shows that there is a canonical
homeomorphism $ K:\g^*/G \to \widehat G $ from the space of coadjoint orbits of $ G
$ in the linear dual space $ \g^* $ onto the unitary dual space $ \widehat G $ of $ G $
(see \cite{Lep-Lud} for details and references).
Then the unitary spectrum  $ \widehat{C^*(G)} $ of the C*-algebra $ C^*(G) $ of the locally compact group $G$ can
be identified with the unitary dual $ \widehat G $ of $ G $.

Since connected Lie groups are second countable, the algebra $ C^*(G) $ and its
dual space $ \widehat G $ are separable topological spaces. This allows us to
work in $ \widehat G $ with sequences instead of nets.
Moreover, if $ G $ is amenable, then the left regular representation $ (\la,L^2(G)) $ of $
C^*(G) $, defined by $ \la(F)\xi:=F\ast \xi$ for $F\in L^1(G)$ and $\xi\in L^2(G) $, is
injective. So we can also identify $ C^*(G) $ with an algebra of convolution
operators on the Hilbert space $ L^2(G) $.

The method of describing group C*-algebras as algebras of operator fields defined on the dual spaces of the groups has been studied in \cite{Fe} and \cite{Lee}. In \cite{Lin-Lud1}, the C*-algebra of $ax+b$-like groups has been characterized as the algebra of operator fields, while the C*-algebra of the Heisenberg groups and of the threadlike groups are described in \cite{Lu-Tu}. In both cases, the topologies of the orbit spaces $\g^*/G$ of the groups are well understood (see for instance \cite{Ar-Lu-Sc}). The isomorphism problem for C*-algebras of $ax+b$-like groups was solved in \cite{Lin-Lud2}.

In this paper, we consider the Lie group $G_6$, whose Lie algebra is the 6-dimensional normal $j$-algebra $ \g_6 $ (which has been classified as $N_{6,28} $ by P. Turkowski in \cite{Tu}); a prototype of this class of Lie algebras can be found, for example,
 in \cite{Ino}. As a transformation group, our $G_6$ is the Iwasawa AN-group of $Sp(2,\R)$.
%the Iwasawa subgroup of the affine automorphism group of the Siegel upper half-plane of degree 2,
%which is a generalization of the $ax+b$ group.
For a geometric background of this class of Lie groups as affine automorphisms of
Siegel domains, we refer the reader, for example, to the textbook \cite{Kaneyuki}.
Thanks to the orbit structure of exponential groups, we can write down the dual space $\wh{G_6}$ of our group $G_6$ and determine its topology. The main difference here from the previous cases, the $ax+b$-like groups,  is that there are now coadjoint orbits of dimension 0, 2, 4 and 6, respectively, which decompose the orbit space into the union of a sequence $ (S_i)_{i=0}^6 $ of seven increasing closed subsets. On each of the sets $ \GA_i=S_i\setminus S_{i-1}, i=1,\ldots, 6 $, the orbit space topology is Hausdorff and the main difficulty is to understand for a given $ a\in C^*(G) $ the behaviour of the operators $ \hat a(\ga)$ for $ \ga\in \GA_i $, when $ \ga $ approaches elements in $ S_{i-1} $.  For each of these  sets $ \GA_i $, we obtain different conditions for the C*-algebra, conditions which we
shall call the \emph{continuity}, the \emph{infinity} and the \emph{boundary} conditions.

Our example motivates the introduction of a special class of C*-algebras which we call \textit{almost $C_0( \K)$,} where $\K$ is the algebra of all compact operators on a certain Hilbert space (Section \ref{spc}). In Section \ref{gsix}, we describe the  exponential Lie group $G_6$, its Lie algebra $\g_6$ and its coadjoint orbits in $\g_6^*$.  Each of them needs a special treatment which we describe in the following sections. In Section \ref{topwhG}, we present  the topology of the dual space $\widehat G_6$ of $G_6$, i.e. we determine the boundaries of each orbit and we compute the limit sets of properly converging sequences of coadjoint orbits. In Section \ref{contcon}, we discover the continuity and infinity conditions and in Section \ref{bound},  the most intricate one,  we introduce the 6 different regions $ \GA_i $ of the dual space of $ G_6 $ according to the dimensions of the coadjoint orbits and obtain the boundary conditions for each of these regions. In the last section (Section \ref{cstarGdef}),
we describe the actual C*-algebra of $G_6$ as an algebra of operator fields and we see that this C*-algebra has the structure of an almost $ C_0(\K)$-C*-algebra.

\section{A special class of C*-algebras}\label{spc}

\begin{definition}\label{cbga}
\rm   Let $\GA$ be a topological Hausdorff space, let $\H$ be a Hilbert space and denote by $\K$ or $\K(\H)$ the algebra of compact operators on $\H$.
Let $C_0(\GA,\K)$ be the space of all continuous mappings $\va:\GA\mapsto \K(\H)$ vanishing at infinity. Equipped with the norm
\begin{eqnarray*}
 \no \va_\iy:=\sup_{\ga\in\GA}\noop{\va(\ga)},\quad \va\in C_0(\GA,\K),
\end{eqnarray*}
the space $C_0(\GA,\K)$ is a C*-algebra, whose spectrum is homeomorphic to the topological space $\GA$.
 \end{definition}

Let now $ A $ be a separable C*-algebra and $\widehat A$ be the unitary dual of $A$.

\begin{definition}\label{}
\rm
We suppose that there exists a
finite increasing family $ S_0\subset S_1\subset\ldots\subset S_d=\wh A  $ of
closed
subsets of the spectrum $ \wh A $ of $ A $ such that for $ i=1,\cdots, d, $ the
subsets $\GA_0=S_0$ and  $ \GA_i:=S_i\setminus S_{i-1} $ are Hausdorff in
their relative topologies. Furthermore we assume that
for every $ i\in \{0,\cdots, d\} $ there exists a Hilbert space $ \H_i $ and
a concrete realization $ (\pi_\ga,\H_i) $ of $ \ga $ on
the Hilbert space $ \H_i $ for every $ \ga\in \GA_i $.
We also assume that the set $ S_0 $ is the collection $ \X $ of all characters of $ A $.
\end{definition}

\begin{definition}\label{fourtrans}
\rm   We define the Fourier transform $ \F:A\to l^\iy(\wh A) $ as to be the
mapping:
\begin{eqnarray*}
\F(a)(\ga)=\wh a(\ga):=\pi_\ga(a) \quad \text{for} \quad \ga\in \wh A, a\in A.
\end{eqnarray*}
 \end{definition}
For a subset $ S\subset\wh A $, denote by $ CB(S) $ the $ * $-algebra of
all uniformly bounded operator fields  $ (\ps(\ga)\in \B(\H_i))_{\ga\in S\cap
\GA_i, i=1,\cdots, d}$, which are operator norm continuous on the subsets $
\GA_i \cap S$ for every $  i\in\{1,\cdots, d\} $ for which  $ \GA_i\cap S\ne\es$.
We provide the algebra $ CB(S) $ with the infinity-norm:
\begin{eqnarray*}
\no{\ps}_{S}:=\sup_{\ga\in S}\noop{\ps(\ga)}.
\end{eqnarray*}

\begin{definition}\label{norcontspec}
\rm We say that a C*-algebra $ A $ is ``\textit{almost  $ C_0(\K) $}''
if for every $ a\in A $:
\begin{enumerate}\label{}
\item  The mappings $ \ga \mapsto \F (a)(\ga) $ are norm-continuous on the different
sets $ \GA_i $.
(We remark that for every closed subset $ S $ of $ \wh A $ and every $ a\in A $,
the restriction $ \F(a)\res S $ of the operator field $ \F (a) $ to $ S $ is then contained in $
CB(S) $).

\item For any $ i=1,\cdots,d$, we have a sequence  $
(\si_{i,k}:CB(S_{i-1})\to CB(S_i))_k $ of linear mappings which are
uniformly bounded in $ k $ such that
$$
\lim_{k\to\iy}\dis\Big((\si_{i,k}(\F (a)\res{S_{i-1}})-\F
(a)\res{\GA_i}),C_0(\GA_i,\K(\H_i))\Big)=0
$$
and such that
$$
\lim_{k\to\iy}\dis\Big((\si_{i,k}(\F (a)\res{S_{i-1}})^*-\F
(a^*)\res{\GA_i}),C_0(\GA_i,\K(\H_i))\Big)=0.
$$
(This condition justifies the name of ``almost $C_0(\K)$'').
\end{enumerate}
\end{definition}

\begin{definition}\label{dnormconspec}
\rm
 Let $ D^*(A) $ be the set of all operator fields $ \va $ defined over $ \wh A $
such that
\begin{enumerate}\label{}

\item The field $ \va $ is uniformly bounded, i.e. $
\no\va:=\sup_{\ga\in\wh A}\noop{\va(\ga)}<\iy $.
\item $ \va\res{\GA_i}\in CB(\GA_i) $ for every $ i=0,1, \ldots, d $.

\item For any sequence $ (\ga_k)_{k\in\N} $ going to
infinity in $\wh A$, we have  $ \lim_{k\to\iy}\noop{\va(\ga_k)}=0 $.

\item We have
%%\begin{eqnarray*}
%%\lim_{k\to\iy}\dis\Big((\si_{i,k}(\va\res{S_{i-1}})\res{\GA_i}-\va\res{\GA_i}),C_0(\GA_i,\K(\H_i))\Big)=0
%%\end{eqnarray*}
\begin{eqnarray*}
\lim_{k\to\iy}\dis\Big((\si_{i,k}(\va\res{S_{i-1}})-\va\res{\GA_i}),C_0(\GA_i,\K(\H_i))\Big)=0
\end{eqnarray*}
and
%%\begin{eqnarray*}
%%\lim_{k\to\iy}\dis\Big((\si_{i,k}(\va\res{S_{i-1}})\res{\GA_i}^*-(\va\res{\GA_i})^*),C_0(\GA_i,\K(\H_i))\Big)=0.
%%\end{eqnarray*}
\begin{eqnarray*}
\lim_{k\to\iy}\dis\Big((\si_{i,k}(\va\res{S_{i-1}})^*-(\va\res{\GA_i})^*),C_0(\GA_i,\K(\H_i))\Big)=0.
\end{eqnarray*}
 \end{enumerate}
\end{definition}

We see immediately that for every $ a\in A $, the operator field
$ \F(a) $
is contained in the set $ D^*(A) $. In fact it turns out that $ D^*(A) $ is  a
C*-subalgebra of $ l^\iy(\wh A) $ and that $ A $ is isomorphic to $ D^*(A) $.

\begin{theorem}\label{aisdsta}
Let $ A $ be a separable C*-algebra which is almost $ C_0(\K) $. Then
the subset $ D^*(A) $ of the C*-algebra $l^\iy(\wh A)  $ is a C*-subalgebra
which is isomorphic to $ A $ under the Fourier transform.
\end{theorem}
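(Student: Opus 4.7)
The plan is to show $D^*(A)=\F(A)$; since $\F$ is already known to be an injective (hence isometric) $*$-homomorphism from $A$ into $l^\iy(\wh A)$, this identification automatically makes $D^*(A)$ a C*-subalgebra and identifies it with $A$ via $\F$. The argument naturally splits into three stages: verify the inclusion $\F(A)\subseteq D^*(A)$; establish that $\F(A)$ is sup-norm dense in $D^*(A)$; and promote density to equality using completeness of $A$ together with isometry of $\F$.

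The first inclusion unpacks the almost $C_0(\K)$ hypothesis almost directly. For $a\in A$, conditions (1) and (4) in Definition \ref{dnormconspec} are precisely clauses (1) and (2) of Definition \ref{norcontspec}, and boundedness is automatic from $\no{\F(a)}\le\no{a}$. The remaining vanishing-at-infinity clause (3) I would prove by induction along the stratification: on $S_0=\X$ it is the classical decay of the Gelfand transform, while on each $\GA_i$ it follows from the induction hypothesis on $S_{i-1}$, the uniform bound on the $\sigma_{i,k}$, and the observation that any operator field lying within vanishing distance of $C_0(\GA_i,\K(\H_i))$ is itself norm-vanishing at infinity.

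The heart of the argument is the density of $\F(A)$ in $D^*(A)$, which I would establish by induction on $i=0,\ldots,d$. Fixing $\vp\in D^*(A)$ and $\ve>0$, at stage $i$ I produce $a_i\in A$ with $\no{\F(a_i)\res{S_i}-\vp\res{S_i}}$ bounded by a constant multiple of $\ve$, the constant depending only on the uniform bounds of the $\sigma_{j,k}$. The base case $i=0$ reduces to Gelfand approximation on the character space $\X$. For the inductive step, given $a_{i-1}$ approximating $\vp$ on $S_{i-1}$, clause (2) of Definition \ref{norcontspec} applied to $\F(a_{i-1})$ and clause (4) of Definition \ref{dnormconspec} applied to $\vp$ yield, for large $k$, that the difference $(\F(a_{i-1})-\vp)\res{\GA_i}$ lies in small sup-norm distance of some $\psi\in C_0(\GA_i,\K(\H_i))$, using linearity and uniform boundedness of $\sigma_{i,k}$ to absorb the $S_{i-1}$-error. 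I would then correct by choosing $b_i$ in the ideal $J_i:=\{a\in A:\F(a)\res{S_{i-1}}=0\}$ with $\F(b_i)\res{\GA_i}\approx\psi$, and set $a_i:=a_{i-1}+b_i$; since $\F(b_i)$ vanishes on $S_{i-1}$, the old approximation is preserved while the new stratum is handled.

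The main obstacle is the correction step: it requires $\F(J_i)\res{\GA_i}$ to be sup-norm dense in $C_0(\GA_i,\K(\H_i))$. This is a CCR/type-I property tacitly built into the almost $C_0(\K)$ framework and must be supplied by concrete information about the representations $\pi_\ga$; for the motivating example of $G_6$ it will follow from irreducibility of the Kirillov representations together with compactness of $\pi_\ga(a)$ for $a\in J_i$. Granting it, iterating the induction up to $i=d$ yields, for every $\ve>0$, an element $a_\ve\in A$ with $\no{\F(a_\ve)-\vp}_\iy<\ve$. The isometry of $\F$ then makes the family $(a_{1/n})_{n\in\N}$ Cauchy in $A$; its limit $a\in A$ satisfies $\F(a)=\vp$, which gives $D^*(A)=\F(A)$ and finishes the proof.
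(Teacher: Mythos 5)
Your overall strategy (show $\F(A)\subseteq D^*(A)$, prove sup-norm density of $\F(A)$ in $D^*(A)$ stratum by stratum, then use that the isometric image of the complete algebra $A$ is closed) is a legitimate alternative to the paper's argument, and your bookkeeping of errors through the uniformly bounded maps $\si_{i,k}$ is sound. But the proof as written has a genuine gap at exactly the point you flag and then ``grant'': the claim that $\F(J_i)\res{\GA_i}$, with $J_i=\ker(S_{i-1})=\{a\in A:\ \F(a)\res{S_{i-1}}=0\}$, is dense in $C_0(\GA_i,\K(\H_i))$. You assert this is extra CCR/type-I input that ``must be supplied by concrete information about the representations $\pi_\ga$'' (Kirillov theory for $G_6$). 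That both leaves the theorem unproved as stated -- it concerns an arbitrary separable almost $C_0(\K)$ C*-algebra, not $C^*(G_6)$ -- and misdiagnoses what is needed: the claim is a consequence of the axioms. Indeed, for $a\in\ker(S_{i-1})$ linearity of $\si_{i,k}$ gives $\si_{i,k}(\F(a)\res{S_{i-1}})=0$, so condition (2) of Definition \ref{norcontspec} forces $\F(a)\res{\GA_i}\in C_0(\GA_i,\K(\H_i))$ (continuity from condition (1), vanishing at infinity from the Riemann--Lebesgue type statement, Proposition \ref{infycon}). Since $\ker(S_{i-1})$ is an ideal whose restriction to the Hausdorff set $\GA_i$ acts by the irreducible representations $\pi_\ga$, takes values in the compacts, and separates the points of $\GA_i$, the Stone--Weierstrass theorem for C*-algebras (Dixmier/Glimm) yields $\widehat{\ker(S_{i-1})}\res{\GA_i}=C_0(\GA_i,\K(\H_i))$ -- which is precisely the lemma your correction step needs, and is exactly how the paper obtains it. Without this, your induction cannot proceed past any stratum with $i\geq 1$.

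For comparison: once that ideal lemma is in hand, the paper does not run an approximation scheme at all. It shows inductively that each $D^*_i$ is a C*-algebra (using the $\si_{i,k}$ and the adjoint conditions, which your route can ignore), then computes the spectrum of $D^*_i$: a representation killing the restriction ideal $I_{i-1}=\ker(R_{i-1})$ factors through $\F(A)\res{S_{i-1}}$ and hence lies in $S_{i-1}$, while one that does not kill $I_{i-1}$ is an evaluation at a point of $\GA_i$ because $I_{i-1}\subset C_0(\GA_i,\K(\H_i))$; since $\F(A)\res{S_i}$ has the same spectrum, a second application of Stone--Weierstrass gives $\F(A)\res{S_i}=D^*_i$. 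Your density-plus-closed-image argument would work and is in some ways more elementary (no spectrum computation for $D^*_i$), but only after you prove, rather than postulate, the statement $\widehat{\ker(S_{i-1})}\res{\GA_i}=C_0(\GA_i,\K(\H_i))$ from the almost $C_0(\K)$ axioms.
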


\begin{proof}
We remark that the conditions (1) to (4) imply that $ D^*(A) $ is a closed
involution-invariant subspace of $ l^\iy(\wh A) $.

For $ i=0,\cdots, d,  $ let  $ D^*_i $ be the set of all operator fields defined
over $ S_i $ satisfying conditions (1) to (4) on the sets  $ S_j$ for $j= 1,\cdots, i $.
Then the sets $ D^*_i $ are closed subspaces of the C*-algebra $l^\iy(S_i) $.

Let $ I_C $ be the closed two-sided ideal in $ A $ generated by the elements of
the form $ ab-ba, a,b\in A $.
Then the space of characters $ S_0=\X $ of $ A $ is the spectrum of $ A/I_C $
and  $ D^*_0$ equals the algebra  $ C_0(S_0) $ of continuous functions on $ S_0
$ vanishing at infinity by the conditions (1) and (2) Since $
\F(A)\res{S_0}=C_0(S_0) $  it follows that $D_0^*=\F(A)\res{S_0}$.

Let us assume now that for some $ 1\leq i <d$, we have that
$ D^*_j=\F(A)\res{S_j}$ for $j=0,\ldots, i-1 $.  We shall prove then that $
D^*_i=\F(A)\res{S_i} $. We know already that $ \F(A)\res{S_i} $ is a subalgebra
of the closed subspace  $ D^*_i \subset CB(\S_i)$ and it follows from its
definition that  the restriction of $ D_i^* $ to $ S_{i-1} $ is contained in $
D^*_{i-1} $. Hence
\begin{eqnarray*}
\F(A)\res{S_{i-1}}\subset D^*(A)\res{S_{i-1}}\subset D^*_{i-1}= \F(A)\res{S_{i-1}}.
\end{eqnarray*}
Therefore
$ D^*(A)\res{S_{i-1}}=\F(A)\res{S_{i-1}}.$
Let $ \ph_1,\ph_2\in D^*_i $.  By our assumption, there exists $ a_1,a_2\in A $ such
that $ \ph_j{\res{S_{i-1}}}=\hat a_j{\res{S_{i-1}}}, j=1,2 $.
The product $ \ph_1\circ \ph_2$ satisfies also the conditions (1), (2) and (3).
We shall show that it also satisfies the condition (4). We now have that
\begin{eqnarray*}
 &&\ph_1\circ \ph_2{\res{\GA_i}}-\si_{i,k}(\ph_1\circ \ph_2{\res{S_{i-1}}})\\
&=& \ph_1{\res{\GA_i}} \circ\ph_2{\res{\GA_i}}-\si_{i,k}(\hat a_1{\res{S_{i-1}}}\circ \hat a_2{\res{S_{i-1}}})\\
&=&\ph_1{\res{\GA_i}} \circ\ph_2{\res{\GA_i}}-\hat a_1{\res{\GA_{i}}}\circ \hat a_2{\res{\GA_{i}}}\\
& &+ (\hat a_1{\res{\GA_{i}}}\circ \hat a_2{\res{\GA_{i}}})-\si_{i,k}(\hat a_1{\res{S_{i-1}}}\circ \hat a_2{\res{S_{i-1}}}).
\end{eqnarray*}
Now
\begin{eqnarray*}
&& \lim_{\de\to 0}\dis\big( (\hat a_1{\res{\GA_{i}}}\circ \hat a_2{\res{\GA_{i}}}-\si_{i,k}(\hat a_1{\res{S_{i-1}}}\circ \hat a_2{\res{\S_{i-1}}})),C_0(\GA_i,\K(\H_i))\big)\\
&=& \lim_{\de\to 0}\dis\big( (\widehat {a_1a_2}{\res{\GA_{i}}}-\si_{i,k}(\widehat{ a_1a_2}{\res{S_{i-1}}})),C_0(\GA_i,\K(\H_i))\big)\\
&=& 0
\end{eqnarray*}
%
%\begin{eqnarray*}
%&& \lim_{\de\to 0}\dis\big( (\hat a_1{\res{S_{i-1}}}\circ \hat a_2{\res{S_{i-1}}}-\si_{i,k}(\hat a_1{\res{S_{i-1}}}\circ \hat a_2{\res{S_{i-1}}})),C_0(\GA_i,\K(\H_i))\big)\\
%&=& \lim_{\de\to 0}\dis\big( (\widehat {a_1a_2}{\res{S_{i-1}}}-\si_{i,k}(\widehat{ a_1a_2}{\res{S_{i-1}}})),C_0(\GA_i,\K(\H_i))\big)\\
%&=& 0
%\end{eqnarray*}
%
by condition (2) in Definition \ref{norcontspec}.
Furthermore,
\begin{eqnarray*}
 &&\ph_1{\res{\GA_i}} \circ\ph_2{\res{\GA_i}}-\hat a_1{\res{\GA_{i}}}\circ \hat a_2{\res{\GA_{i}}}\\
&=&\ph_1{\res{\GA_i}} \circ(\ph_2{\res{\GA_i}}-\hat a_2{\res{\GA_{i}}})
+(\ph_1{\res{\GA_i}}-\hat a_1{\res {\GA_{i}}}) \circ\hat a_2{\res{\GA_{i}}}\\
&=&\ph_1{\res{\GA_i}} \circ(\ph_2{\res{\GA_i}}-\si_{i,k}(\ph_2{\res {S_{i-1}}}))
+\ph_1{\res{\GA_i}} \circ(\si_{i,k}(\hat a_2{\res {S_{i-1}}})-\hat a_2{\res{\GA_i}})\\
&&+(\ph_1{\res{\GA_i}}-\si_{i,k}(\ph_1{\res {S_{i-1}}})) \circ\hat a_2{\res{\GA_{i}}}
+(\si_{i,k}(\hat a_1{\res {S_{i-1}}})-\hat a_1{\res {\GA_{i}}}) \circ\hat a_2{\res{\GA_{i}}}.
\end{eqnarray*}
%
%\begin{eqnarray*}
% &&\ph_1{\res{\GA_i}} \circ\ph_2{\res{\GA_i}}-\hat a_1{\res{\GA_{i}}}\circ \hat a_2{\res{\GA_{i}}}\\
%&=&\ph_1{\res{\GA_i}} \circ(\ph_2{\res{\GA_i}}-\hat a_2{\res{\GA_{i}}})
%+(\ph_1{\res{\GA_i}}-\hat a_1{\res {\GA_{i}}}) \circ\hat a_2{\res{\GA_{i}}}\\
%&=&\ph_1{\res{\GA_i}} \circ(\ph_2{\res{\GA_i}}-\si_{i,k}(\ph_2{\res {S_{i-1}}}))
%+\ph_1{\res{\GA_i}} \circ(\si_{i,k}(\hat a_2{\res {S_{i-1}}})-\hat a_2{\res{\GA_i}})\\
%&&+(\ph_1{\res{\GA_i}}-\si_{i,k}(\ph_1{\res {S_{i-1}}})) \circ\hat a_2{\res{\GA_{i}}}
%+(\si_{i,k}(\hat a_1{\res {S_{i-1}}})-\hat a_1{\res {S_{i-1}}}) \circ\hat a_2{\res{\GA_{i}}}.
%\end{eqnarray*}
%
Hence,
\begin{eqnarray*}
 \lim_{\de\to 0}\dis(\ph_1{\res{\GA_i}}\circ \ph_2{\res{\GA_i}}-\si_{i,k}(\ph_1\circ \ph_2{\res{S_{i-1}}}),C_0(\GA_i,\K(\H_i)))= 0.
\end{eqnarray*}
%
%\begin{eqnarray*}
% \lim_{\de\to 0}\dis(\ph_1{\res{\GA_i}}\circ \ph_2{\res{\GA_i}}-\si_{i,k}(\ph_1\circ \ph_2{\res{S_{i-1}}}),C_0(\GA_i,\K(\H_i)))= 0.
%\end{eqnarray*}
%
Therefore $ D^*_i $ is an algebra, i.e. a C*-subalgebra of $ CB(S_i) $. The conditions (1) and (2) for the C*-algebra $ A $ tell us that the ideal $ \ker{S_{i-1}}=\{a\in A: \hat a\res{S_{i-1}}=0\} $  of $ A $ is isomorphic with the algebra $ C_0(\GA_i, \K(\H_i))\subset  \F(A) $ of all continuous mappings defined on $ \GA_i $ with values in $ \K(\H_i) $ and vanishing at infinity. Indeed, for any $ \ga\ne\ga'\in S_i $, there exists an $ a\in A $ such that $ \ga(a)\ne 0 $, but $ \ga'(a)=0 $ and also $ \hat a\res{S_{i-1}}=0 $. Then $ \hat a\res{\GA_i} $ is  in $ C_0(\GA_i,\K(\H_i)) $ by the condition (1),(2) and (3) and in this way we see that $\ker{S_{i-1}}  $ separates the points in $ \GA_i $ and that $ \widehat{\ker{S_{i-1}}}\res{\GA_i}\subset C_0(\GA_i,\K(\H_i))$. The   theorem of Stone-Weierstrass says now that $ \widehat{\ker{S_{i-1}}}\res{\GA_i}=C_0(\GA_i,\K(\H_i)) $.

Let now $ \pi $ be an element of the spectrum of $D^*_i $. Let $ R_{i-1}:D^*_i \to D^*_{i-1} $ be the restriction map and
let $ I_{i-1}:= \ker{R_{i-1}}\subset D^*_i$.
Suppose that $ \pi(I_{i-1})=\{0\} $. We can then consider $ \pi $ as a representation of the algebra $D^*_i/I_{i-1}$. This algebra is isomorphic with the  image of $ R_{i-1}  $ which is itself isomorphic with $ \F(A)\res{S_{i-1}}$ and therefore $ \pi $ is an element of $ S_{i-1} $.
Suppose that  $ \pi $ is not trivial on $ I_{i-1} $.  By the condition (4) this kernel  $I_{i-1} $ is contained in  $ C_0(\GA_i,\K) $ and since it contains $ \ker{R_{i-1}}\cap \F(A) $, it is itself isomorphic to $ C_0(\GA_i) $. Therefore $ \pi $ is an evaluation at some point in $ \GA_{i} $.

We have seen that the spectrum of $ D^*_i $ coincides with the spectrum of the subalgebra  $ \F(A)\res{S_i} $.  Hence by
the theorem of Stone-Weierstrass (see \cite{Di}), the  C*-algebra $\F(A)\res{S_i}$ and $D^*_i$ coincide.
\end{proof}

\section{The group $ G_6 $}\label{gsix}

This paper aims to show that the C*-algebra of the exponential Lie group $G_6=\exp(\g_6)$ is in the class of almost $C_0(\K)$-C*-algebras.

%\paragraph{Definition:}
Let $\g= \g_6 $ be the Lie algebra spanned by the vectors $ A,B,P,Q,R,S $ and equipped with the Lie brackets:

\begin{equation*}
[P, Q]= R,
\qquad
[P, R]= S,
\qquad
[A, P]= \frac12 P,
\qquad
[B, P]=-\frac12 P,
\end{equation*}
\begin{equation*}
[B, Q]=Q,
\qquad
[A, R]=\frac12 R,
\qquad
[B, R]=\frac12 R,
\qquad
[A, S]=S,
\end{equation*}
\begin{equation*}
[A, Q]=0,
\qquad
[B, S]=0.
\end{equation*}
We introduce the following closed subgroups and coordinate functions on $ G $, write
\begin{eqnarray}\label{}
 \nn \g_0:= \span{A,B,P},\ G_0:=\exp {(\g_0)},\\
 \nn \h:= \span{Q,R,S},\ H:=\exp{( \h)}.\\
\end{eqnarray}
%Then the subsets $ G_0,G_1 $ are closed subgroups of $ G $ and the subsets $ K,H,L $ are closed normal subgroups. We have
Then the subset $ G_0 $ is a closed subgroup of $ G $, and the subset $ H $ is a closed normal subgroup. We have
\begin{eqnarray}\label{}
 \nn G &= &G_0\cdot H %=G_1\cdot K=C\cdot L.
\end{eqnarray}
as a topological product.
For $(a,b,p,q,r,s)\in\mathbb R^6$, let
\begin{equation*}
g=E(a,b,p,q,r,s):=
\exp( aA)\exp( bB)\exp( pP)\exp (qQ)\exp (rR)\exp (sS).
\end{equation*}
%This allows us to define 6 different coordinate functions on $ G $:
%\begin{eqnarray}\label{cooerd}
% \nn \al(g):= a, \be(g) :=b,\ph(g):=p,\\
%\nn \kappa(g):= q, \rh(g):=r,\si(g):=s.
%\end{eqnarray}
%\begin{eqnarray}\label{}
% \nn g = &g_0 h\\
%\nn g =&  g_0 \kappa(g)\rh(g)\si(G),\\
%\nn \text{where }& \kappa(g)\in\ul Q,  \rh(g)\in\ul R \text{ and } \si(G)\in\ul S.
%\end{eqnarray}

\subsection{A parametrization of the set of
coadjoint orbits}\label{desor}

We give in this section a system of representatives  of the set of coadjoint orbits $\g^*/\Ad^*G$.

Let $\{A^*, B^*, P^*, Q^*, R^*, S^*\}$ be the dual basis of $\{A, B, P, Q, R, S \}$, and
$f={a}^*A^*+{b}^*B^*+{p}^*P^*+{q}^*Q^*+
{r}^*R^*+{s}^*S\in\g^*$. Then
\begin{eqnarray}\label{coadG}
\Ad^* (g)(f)&=&
\left(a^*+\frac{p}{2}p^*+\frac12(pq+r)r^*+\left(s+\frac{pr}{2}\right)s^*\right)A^*\\
 \nn& &+  \left(b^*-\frac{p}{2}p^*+qq^*+\frac{1}{2}(r-pq)r^*-\frac{pr}{2}s^*\right)B^*\\
 \nn& &+  e^{\frac{-a+b}{2}}(p^*+qr^*+ rs^*)P^*\\
 \nn& &+  e^{-b}\left(q^*-pr^*+\frac12 p^2s^*\right) Q^*
 +  e^{\frac{-a-b}{2}}(r^* -ps^*) R^*
 +  e^{-a}s^*S^*.
\end{eqnarray}

In the sequel, we write
$\mathbb R^+:=\{x\in\mathbb R;\ x>0\}$ and $\mathbb R^{+,0}:=\mathbb R^+\cup\{0\}$.
We note that the determinant of the matrix
$\displaystyle{(f([V,W]))_{V,W\in\{A,B,P,Q,R,S\}}}$
is $\displaystyle{\frac14 {s^*}^2(2q^*s^*-{r^*}^2)^2}$,
and there are open orbits. The following list gives a parametrization of the coadjoint
orbits $\g^*/\Ad^*G$.
\begin{description}
\item[$0$-dimensional orbits]
$a^*A^*+b^*B^*$, where $a^*, b^*\in\mathbb R$.
\item[$2$-dimensional orbits] \
\begin{itemize}
\item [{\bf (2d-1)}]
$\alpha^*(\frac{A^*+B^*}{2})+ \varepsilon P^*$, where $\alpha^*\in\mathbb R,$ $\varepsilon=\pm 1$,
\begin{eqnarray*}
  \OM_{\alpha^*(\frac{A^*+B^*}{2})+ \varepsilon P^*}=\{
(\frac{\alpha^*}{2}+\frac{p}{2}\varepsilon)A^*
 + (\frac{\alpha^*}{2}-\frac{p}{2} \varepsilon)B^*
 + e^{-\alpha}\ve P^*, \alpha,p\in\R\}.
 \end{eqnarray*}
[ see \eqref{2d-1-1}, \eqref{2d-1-2} below]
\item [{\bf (2d-2)}]
$a^*A^*+ \nu Q^*$, where $a^*\in\mathbb R$, $\nu=\pm1$,
\begin{eqnarray*}
  \OM_{a^* A^*+\nu Q^*}=\{
a^*A^* + q\nu B^* +  e^{-b}\nu Q^*, b,q\in\R\}.
 \end{eqnarray*}
[ see %\eqref{2d-2-1},
      \eqref{2d-2-2}]
\end{itemize}
\item[$4$-dimensional orbits] \
\begin{itemize}
\item [{\bf (4d-1)}]
$\varepsilon P^*+\nu Q^*$, where
$\varepsilon=\pm1, \nu =\pm 1$,
\begin{eqnarray*}
  \OM_{\ve P^*+\nu Q^*}=\{
p A^* + q B^* + e^{\frac{-a+b}{2}}(\ve)P^*
 +  e^{-b}\nu Q^*, p,q,a,b\in\R\}.
 \end{eqnarray*}
[ see %\eqref{4d-1-1},
      \eqref{4d-1-2} ]
\item [{\bf (4d-2)}]
$b^*B^*+\varepsilon R^*$, where
$b^*\in\mathbb R$, $\varepsilon=\pm1$,
\begin{eqnarray*}
 \OM_{b^* B^*+\ve R^*}=\{
rA^*
 + \left(b^*+r-pq\ve\right)B^*
 + qP^*
 + (-p e^{-a}) Q^*
 +   e^{-a} \ve R^*,
 a,p,q,r \in\R\}.
 \end{eqnarray*}
[ see \eqref{4d-2-1}, \eqref{4d-2-2} ]
\item [{\bf(4d-3)}]
$b^*B^*+\varepsilon S^*$, where
$b^*\in\mathbb R$, $\varepsilon=\pm1$,
\begin{eqnarray*}\label{}
\OM_{b^*B^*+\varepsilon S^*}&=&\{
sA^*
 + \left(b^*-\frac{pr}{2}\ve\right)B^*
 + e^{\frac{-a}{2}}(r\ve)P^*
 +  \left(\frac{1}{2} p^2\ve \right) Q^*
 +  e^{\frac{-a}{2}}( -p\ve ) R^*
 +  e^{-a}\ve S^*,\\
 &&s,p,r,a\in\R\}.
\end{eqnarray*}
[ see \eqref{4d-3-1}, \eqref{4d-3-2} ]
\end{itemize}

\item[$6$-dimensional orbits]
\begin{itemize}
\item[{\bf (6d)}]
$\nu Q^*+\varepsilon S^*$, where $\nu=\pm1$, $\varepsilon=\pm1$,
\begin{eqnarray*}
\OM_{\nu Q^*+\varepsilon S^*}&=&\{s A^*+r B^*+q P^*+  e^{-b}(\nu+\frac{p^{2}\ve}{2})Q^*+(-e^{-\frac{a+b}{2}}p\ve)R^*+(e^{-a}\ve)S^*,\\
&& s,r,q, a,b,p\in\R\}.
 \end{eqnarray*}
[ see \eqref{6d1},\eqref{6d2}]
\end{itemize}
\end{description}

\paragraph{Proof and description of each orbit.}
Let $\Omega$ be a coadjoint orbit and $f \in \Omega$.

\begin{description}
\item[Case I)]
Suppose $f(S)\neq 0$.
Then by \eqref{coadG}, there exists  $g=E(a,0,p,0,r,s)$ such  that
$g\cdot f=b^*B^*+q^*Q^*+\varepsilon S^*$,
where $\varepsilon=\pm 1$ and $b^*, q^*\in\mathbb R$.
Thus let $f=b^*B^*+q^*Q^*+\varepsilon S^*$.
\begin{itemize}
\item[I-1)]
If $q^*\neq 0$, then there exists $g=E(0,b,0,q,0,0)$
such that $\Ad^*g(f)
=(b^*+qq^*)B^*+e^{-b}q^*Q^*+\varepsilon S^*=\nu Q^*+\varepsilon S^*$,
where $\nu=\pm1$.
\begin{eqnarray}\label{6d1}
f: & =&\nu Q^*+\varepsilon S^*,\\
\nn\Ad^*E(a,b,p,q,r,s)(f)
&=&\bar aA^*+\bar bB^*+\bar pP^*+\bar qQ^*+\bar rR^*+\bar s S^*,
\end{eqnarray}
where
\begin{eqnarray*}
&\bar a&=  \left(s+\frac12 pr\right)\varepsilon ,
\quad
\bar b=  \left( q\nu -\frac12 pr \varepsilon\right),
\quad
\bar p=  e^{\frac{-a+b}{2}}r\varepsilon ,
\\
&\bar q&=  e^{-b}\left(\nu +\frac12 p^2\varepsilon\right),
\quad
\bar r=  -e^{\frac{-a-b}{2}}p\varepsilon ,
\quad
\bar s=  e^{-a}\varepsilon .
\end{eqnarray*}
Let
$\Phi(l):=2l(Q)l(S)-l(R)^2$ for $l\in\g^*$. Then $\Phi(\bar f)=
2\bar q\bar s-\bar r^2=2e^{-a-b}\nu\varepsilon\neq 0$,
and  we have
\begin{equation}\label{6d2}
\Omega_f=\{l ;\ \Phi(l)\neq 0, \ l(S)\neq 0\}.
\end{equation}

\item[I-2)]
If $q^*=0$, then $f=b^*B^*+\varepsilon S^*$ and
$\g(f)=\mathbb R\mbox{-span}\{B, Q\}$.

\begin{equation}\label{4d-3-1}
\Ad^*E(a,0,p,0,r,s)(f)=\bar aA^*+\bar bB^*+\bar pP^*
+\bar qQ^*+\bar rR^*+\bar sS^*,
\end{equation}
where
\begin{eqnarray*}
& \bar a& =  \left(s+\frac{pr}{2}\right) \varepsilon,
\quad \bar b =\left(b^*-\frac{pr}{2}\varepsilon\right),
\quad \bar p= re^{-\frac{a}{2}}\varepsilon,\\
& \bar q& =\frac12 p^2\varepsilon,
\quad \bar r = -pe^{-\frac{a}{2}}\varepsilon,
\quad \bar s = e^{-a}\varepsilon.
\end{eqnarray*}
Let $\Phi, \Phi_2$ be functions on
$\g^*$ defined by
$$\Phi(l)
=2l(Q)l(S)-l(R)^2,
\quad
\Phi_2(l)
=l(R)l(P)-2(l(B)-b^*)l(S).$$

%$$\Phi_1(\bar a, \bar b, \bar p, \bar q, \bar r, \bar s)
%=\bar r^2-2\bar q\bar s,
%\quad
%\Phi_1(\bar a, \bar b, \bar p, \bar q, \bar r, \bar s)
%=\bar r\bar p-2(\bar b-b^*)\bar s.$$
Then we have
\begin{eqnarray}\label{4d-3-2}
\nn\Omega_f
=\Phi^{-1}(0)\cap\Phi_2^{-1}(0)\cap
(\mathbb RA^*+\mathbb RB^*+\mathbb RP^*+(\mathbb R^{+,0})\varepsilon Q^*
+\mathbb RR^*+\mathbb R^+\varepsilon S^*).\\
&&
\end{eqnarray}
In fact, let $\bar f:=(\bar a,\bar b,\bar p,\bar q, \bar r,\bar s)$ be
an element of the right hand side. We take
\begin{eqnarray*}
a:=  -\log(\varepsilon \bar s), \quad
p:=  -\bar r e^{\frac a2}\varepsilon, \quad
r:=  \bar p e^{-\frac a2}\varepsilon, \quad
s:=  \bar a-\frac{pr}{2}.
\end{eqnarray*}
Then we have
$\Ad^*E(a,0,p,0,r,s)f=\bar f$, this is, $\bar f\in\Omega_f$.
The reverse inclusion is easily verified.

\end{itemize}

\item[Case II)]
Suppose $f(S)=0$. Then $\Ad^*G(f)(S)=\{0\}$.
%Let $f={a}^*A^*+{b}^*B^*+{p}^*P^*+{q}^*Q^*+{r}^*R^*$.
Then \eqref{coadG}
is reduced to
\begin{eqnarray}\label{coad-s0}
\Ad^* g(f)&= &
\left(a^*+\frac{p}{2}p^*+\frac12(pq+r)r^*\right)A^*\\
\nn& &+ \left(b^*-\frac{p}{2}p^*+qq^*+\frac{1}{2}(r-pq)r^*\right)B^*\\
\nn& &+ e^{\frac{-a+b}{2}}(p^*+qr^*)P^*
 + e^{-b}\left(q^*-pr^*\right) Q^*
 + e^{\frac{-a-b}{2}}r^* R^*.
\end{eqnarray}

\begin{itemize}
\item[II-1)]
Suppose $f(R)=r^*\neq 0$. Then by \eqref{coad-s0},
there exists
$g=E(\alpha, \alpha, p,q,r,0)$
such that $\Ad^*g(f)=b^*B^*+\varepsilon R^*$, where $\varepsilon=\pm 1$,
$b^*\in\mathbb R$.
Let $f=b^*B^*+\varepsilon R^*$. Then $\g(f)=\mathbb R\mbox{-span}\{S, A-B\}$.
Let $X=A+B$, $Y=A-B$, $X^*=\frac 12(A^*+B^*)$, $Y^*=\frac 12(A^*-B^*)$.
Then $\{X^*, Y^*, P^*, Q^*, R^*, S^*\}$ is the dual basis of
$\{X,Y, P, Q, R, S\}$.

\begin{eqnarray}\label{4d-2-1}
f: &=&b^*B^*+\varepsilon R^*,\\
\nn\Ad^*E(\alpha, \alpha, p,q,r,0)(f)
&= &\frac12(pq+r)\varepsilon A^*
+  \left(b^*+\frac{1}{2}(r-pq)\varepsilon\right)B^*\\
\nn&  & +  q\varepsilon P^*
-  e^{-\alpha}p\varepsilon Q^*
+  e^{-\alpha}\varepsilon  R^*
\\
\nn&=& (b^*+r\varepsilon)X^* +(-b^*+pq\varepsilon)Y^*
+  q\varepsilon P^*
-  e^{-\alpha}p\varepsilon Q^*
+  e^{-\alpha}\varepsilon  R^*,
\end{eqnarray}
\begin{eqnarray}\label{4d-2-2}
\Omega_f= \{\bar xX^*+\bar yY^*+\bar pP^*+\bar qQ^*+\bar rR^* ;\
\bar r\in\varepsilon\mathbb R^+, \bar y=-b^*-(\bar p\bar q/\bar r)\}.
\end{eqnarray}

\item[II-2)]
Suppose $f(R)=0$. Then $G\cdot f(R)=\{0\}$, and we have
\begin{eqnarray*}\label{coad-r0-s0}
\nn\Ad^* g(f)=
\left(a^*+\frac{p}{2}p^*\right)A^*
+  \left(b^*-\frac{p}{2}p^*+qq^*\right)B^*
+  e^{\frac{-a+b}{2}} p^* P^*
+  e^{-b} q^* Q^*.
%+  0^*R^*
%+  0^*S^*
\end{eqnarray*}

\begin{itemize}
\item[II-2-i)]
Suppose $q^*\neq 0$ and $p^*\neq 0$. Then by
\eqref{coad-r0-s0}, there exists $g=E(a,b,p,q,0,0)$ such that
$\Ad^*g(f)=\varepsilon P^*+\nu Q^*$, where
$\varepsilon=\pm1, \nu =\pm 1$.
We have $\g(\varepsilon P^*+\nu Q^*)=\mathbb R\mbox{-span}\{S, R\}$, and
\begin{eqnarray*}\label{4d-1-1}
f &:=&  \varepsilon P^*+\nu Q^*,\\
\Ad^*E(a,b,p,q,0,0)(f) &=&
\frac{p}{2}\varepsilon A^*
+ \left(-\frac{p}{2}\varepsilon +q \nu \right)B^*
+  e^{\frac{-a+b}{2}} \varepsilon P^*
+  e^{-b} \nu Q^*,
%+  0^*R^* +  0^*S^*
\end{eqnarray*}
\begin{equation}\label{4d-1-2}
\Omega_f=  \mathbb RA^*+\mathbb RB^*+ \mathbb R^+\varepsilon P^*
+\mathbb R^+\nu Q^*.
\end{equation}

\item[II-2-ii)]
Suppose $q^*\neq 0$ and $p^*=0$.
Then there exists $g=E(0,b,0,q,0,0)$ such that
$g\cdot f = a^*A^*+ \nu Q^*$. We have
$\g(a^*A^*+\nu Q^*)=\mathbb R\mbox{-span}\{A, P, R, S\}$,
\begin{eqnarray*}\label{2d-2-1}
f & :=& a^*A^*+\nu Q^*,\\
\Ad^*E(0,b,0,q,0,0)(f)
 & = & a^*A^*+ q\nu B^* + e^{-b}\nu Q^*,
\end{eqnarray*}
\begin{equation}\label{2d-2-2}
\Omega_f  =a^*A^*+\mathbb R B^*+ \mathbb R^+\nu Q^*.
\end{equation}

\item[II-2-iii)]
Suppose $q^*=0$ and $p^*\neq 0$.
Then there exists $g=\exp(\alpha(A-B))\exp (pP)$ such that
$g\cdot f= \alpha^*(\frac{A^*+B^*}{2})+ \varepsilon P^*$,
where $\alpha ^*\in\mathbb R$, $\varepsilon=\pm1$.
Let $f=\alpha ^*(\frac{A^*+B^*}{2})+ \varepsilon P^*$. Then
we have
$\g(f)=\mathbb R\mbox{-span}\{ A+B, R, S, Q\}$.

Letting $X=A+B$, $Y=A-B$, $X^*=\frac{A^*+B^*}{2}$, $Y^*=\frac{A^*-B^*}{2}$,
we have
\begin{eqnarray}\label{2d-1-1}
f :=\alpha^*(\frac{A^*+B^*}{2})+ \varepsilon P^*=\alpha^*X^*+\varepsilon P^*,
\end{eqnarray}
\begin{eqnarray*}
\Ad^*E(\alpha,-\alpha, p,0, 0, 0)(f)
\nn&=& \left(\frac{\alpha^*}{2}+\frac p2 \varepsilon \right)A^*+
\left(\frac{\alpha^*}{2}-\frac p2 \varepsilon \right)B^*
+e^{-\alpha}\varepsilon P^*\\
\nn&=& \alpha^*X^*+ p\varepsilon Y^*+e^{-\alpha}\varepsilon P^*,
\end{eqnarray*}
\begin{equation}\label{2d-1-2}
\Omega_f  =\alpha^*X^*+\mathbb RY^*+\mathbb R^+\varepsilon P^*.
\end{equation}
\item[II-2-iv)]
Suppose $q^*=p^*=0$.
Then $G\cdot f=\{a^*A^*+b^*B^*\}$ which is a point.
\qed

\end{itemize}
\end{itemize}

\end{description}

\section{The topology of $ \wh G $}\label{topwhG}
\subsection{The closure of the different orbits and their corresponding irreducible representations}

We shall need in our description of $ C^*(G) $ the following criterion for the compactness of an operator $ \pi(F), F\in A, $ where $ \pi $ is an irreducible representation of a type I C*-algebra $ A $.

\begin{theorem}\label{comdfourth}
The operator $ \pi(F), F\in A $, is compact if and only if $ \pi'(F)=0 $ for every $ \pi' $ in the boundary of $ \pi $.
\end{theorem}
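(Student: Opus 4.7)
The plan is to show that the ``compact ideal'' $J(\pi):=\pi^{-1}(\K(\H_\pi))$, which is a closed two-sided ideal of $A$ by continuity of $\pi$ together with the fact that $\K(\H_\pi)$ is an ideal in $\B(\H_\pi)$, coincides with the intersection $\bigcap_{\pi'\in\partial\{\pi\}}\ker{\pi'}$ of all kernels in the boundary $\partial\{\pi\}:=\overline{\{\pi\}}\setminus\{\pi\}$ of $\pi$ in $\wh A$. Granted this, the theorem is immediate, since $\pi(F)\in\K(\H_\pi)$ iff $F\in J(\pi)$ iff $\pi'(F)=0$ for every $\pi'\in\partial\{\pi\}$. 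Throughout, the type I hypothesis is used via two standard facts: $\pi(A)\supseteq\K(\H_\pi)$ for every $\pi\in\wh A$, and the Dixmier map $\pi\mapsto\ker{\pi}$ is a homeomorphism of $\wh A$ onto $\mathrm{Prim}(A)$ equipped with the hull--kernel topology, so that $\pi'\in\overline{\{\pi\}}$ is equivalent to $\ker{\pi'}\supseteq\ker{\pi}$.

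For the ``only if'' direction I would fix $\pi'\in\partial\{\pi\}$, so $\ker{\pi'}\supsetneq\ker{\pi}$ and $\ker{\pi'}/\ker{\pi}$ is a nonzero closed two-sided ideal of $\pi(A)$. Since $\pi(A)$ is a C*-subalgebra of $\B(\H_\pi)$ acting irreducibly and containing $\K(\H_\pi)$, the ideal $\K(\H_\pi)$ is both essential and simple in $\pi(A)$; hence any nonzero closed ideal of $\pi(A)$ meets $\K(\H_\pi)$ nontrivially and, by simplicity, must contain all of it. Applying this to $\ker{\pi'}/\ker{\pi}$ yields $\ker{\pi'}\supseteq J(\pi)$, so $\pi(F)\in\K(\H_\pi)$ forces $\pi'(F)=0$.

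For the ``if'' direction I would argue by contrapositive. Suppose $\pi(F)$ is not compact; then its image in the quotient C*-algebra $\pi(A)/\K(\H_\pi)$ is nonzero, and since every C*-algebra is semisimple there is a primitive ideal $\bar P$ of $\pi(A)/\K(\H_\pi)$ not containing this image. Its preimage is a primitive ideal $P$ of $\pi(A)$ containing $\K(\H_\pi)$ but not $\pi(F)$; then $Q:=\pi^{-1}(P)$ is a primitive ideal of $A$ with $J(\pi)\subseteq Q$ and $F\notin Q$. By the Dixmier correspondence, $Q=\ker{\pi'}$ for some $\pi'\in\wh A$, which then lies in $\partial\{\pi\}$ (because $\ker{\pi'}\supseteq J(\pi)\supsetneq\ker{\pi}$) and satisfies $\pi'(F)\neq 0$.

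The main subtlety to nail down is the correspondence $\partial\{\pi\}\leftrightarrow\mathrm{Prim}(\pi(A)/\K(\H_\pi))$: every primitive ideal of $\pi(A)$ strictly containing $\{0\}$ automatically contains $\K(\H_\pi)$ (used in the necessity half), and every primitive ideal of the quotient lifts back to the kernel of some $\pi'\in\wh A$ (used in the sufficiency half). Both facts rest on the minimal/essential-ideal property of the compact operators inside an irreducibly acting C*-subalgebra of $\B(\H_\pi)$, which I would cite as a single black-box lemma rather than re-prove; the remaining bookkeeping is then routine.
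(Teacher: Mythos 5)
Your argument is correct and complete. Note that the paper does not actually prove Theorem \ref{comdfourth}: it is quoted as a known compactness criterion for irreducible representations of type I C*-algebras and immediately specialized to exponential groups in Theorem \ref{expcompcon}, so there is no internal proof to compare against; what you supply is the standard ideal-theoretic argument establishing $\pi^{-1}(\K(\H_\pi))=\bigcap_{\pi'\in\partial\{\pi\}}\ker{\pi'}$. The type I hypothesis enters exactly where it should, namely through $\K(\H_\pi)\subseteq\pi(A)$ and through the homeomorphism $\wh A\simeq\mathrm{Prim}(A)$, so that the boundary points of $\{\pi\}$ correspond precisely to the primitive ideals strictly containing $\ker{\pi}$. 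The inclusion of the compact ideal $J(\pi)$ into every boundary kernel then follows from $\K(\H_\pi)$ being a simple essential ideal of the irreducibly acting algebra $\pi(A)$, and the reverse inclusion from the fact that the primitive ideals of $\pi(A)/\K(\H_\pi)$ have zero intersection and lift to kernels of boundary representations; both steps are carried out correctly, including the check that the lifted kernel differs from $\ker{\pi}$ because $J(\pi)$ strictly contains $\ker{\pi}$. Your two black boxes (minimality and essentiality of the compacts inside an irreducibly acting C*-algebra containing them, and primitivity of preimages of primitive ideals under surjective homomorphisms) are standard and can be cited from Dixmier's book, which the paper already references. Your reading of ``boundary'' as $\overline{\{\pi\}}\setminus\{\pi\}$ matches the paper's use for orbit boundaries, and the degenerate case $\partial\{\pi\}=\emptyset$ is covered automatically, since your contrapositive then shows $\pi(A)=\K(\H_\pi)$. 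I see no gap.
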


Since the topology of the spectrum  $ \wh {C^*(G)} $ of an exponential Lie group $ G=\exp (\g )$
is that of the topology of the space of coadjoint orbits $ \g^*/G $ according to \cite{Lep-Lud}, we have the following.

\begin{theorem}\label{expcompcon}
If $ G=\exp(\g) $ is an exponential Lie group, $\ell $ is an element of $ \g^* $ and $ \pi_\ell $ is the corresponding irreducible unitary representation, then for $ F\in  C^*(G)  $ the operator $ \pi_\ell(F) $ is compact if and only $ \pi_q(F)=0 $ for any $ q $ in the boundary of the orbit $ \OM_\ell $ of $ \ell $.
\end{theorem}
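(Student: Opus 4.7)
The plan is to obtain this statement as a direct corollary of Theorem \ref{comdfourth} applied to the C*-algebra $A = C^*(G)$, combined with the Kirillov-Bernat-Vergne-Pukanszky-Ludwig-Leptin description of the unitary dual recalled in the introduction. The first step is to check that the hypotheses of Theorem \ref{comdfourth} are in force: since $G = \exp(\g)$ is an exponential solvable Lie group, a classical theorem (Dixmier, Pukanszky) says that $C^*(G)$ is of type I, so that its spectrum $\wh{C^*(G)}$ carries a well-behaved Jacobson topology and the compactness criterion in Theorem \ref{comdfourth} is applicable.

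Next I would invoke the canonical homeomorphism $K:\g^*/G \to \wh G$, $\OM_\ell \mapsto \pi_\ell$, cited from \cite{Lep-Lud}, together with the identification $\wh G = \wh{C^*(G)}$. Being a homeomorphism, $K$ sends closures to closures: for any $\ell\in\g^*$ and any other $q\in\g^*$, the representation $\pi_q$ lies in $\ol{\{\pi_\ell\}}$ in $\wh G$ if and only if the orbit $\OM_q$ lies in the closure of $\OM_\ell$ in $\g^*/G$, which in turn is equivalent to $q\in\ol{\OM_\ell}$ in $\g^*$. Subtracting the trivial cases, $\pi_q$ lies in the boundary $\ol{\{\pi_\ell\}}\setminus\{\pi_\ell\}$ of $\pi_\ell$ inside $\wh G$ if and only if $q$ lies in the boundary $\ol{\OM_\ell}\setminus\OM_\ell$ of the orbit $\OM_\ell$.

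Combining these two observations gives the claim: by Theorem \ref{comdfourth}, $\pi_\ell(F)$ is compact iff $\pi'(F)=0$ for every $\pi'$ in the boundary of $\pi_\ell$ in $\wh G$; and by the previous paragraph this set of $\pi'$ is precisely $\{\pi_q : q\in\partial\OM_\ell\}$. Hence the compactness of $\pi_\ell(F)$ is equivalent to $\pi_q(F)=0$ for every $q$ in the boundary of $\OM_\ell$.

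There is no real technical obstacle here; the only point requiring a small amount of care is to make sure that the set-theoretic ``boundary of the orbit'' on the $\g^*$-side is read through the quotient map $\g^*\to\g^*/G$ correctly, so that it matches the Fell-topological boundary of the singleton $\{\pi_\ell\}$ in $\wh G$. This identification is exactly what the homeomorphism $K$ provides, so the passage between the two descriptions of the boundary requires no additional argument.
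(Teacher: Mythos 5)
Your proposal is correct and follows essentially the same route as the paper: the paper also derives this theorem directly from the compactness criterion of Theorem \ref{comdfourth} together with the Leptin--Ludwig homeomorphism between $\g^*/G$ and $\wh G$, which identifies the boundary of the orbit $\OM_\ell$ with the boundary of $\{\pi_\ell\}$ in the dual topology. Your added remarks on $C^*(G)$ being type I and on closures passing through the homeomorphism are exactly the (implicit) justifications the paper relies on.
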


We are forced therefore to determine the boundary of every coadjoint orbit of our group $ G $.

\subsubsection{The open orbits $ \OM_{\ve S^*\pm\ve Q^*}, \ve=\pm1 $}\label{openor}

Let $f=\varepsilon S^*+\nu Q^*$, where
$\nu=\pm \ve$.
Then $\Ad^*G(f)$ is an open orbit, and
we have
$$\Omega_f=\{l ;\ \Phi(l)\neq 0, \ l(S)\neq 0\},$$
where
$\Phi(l):=2l(Q)l(S)-l(R)^2$ for $l\in\g^*$.
%Then $\Phi(\bar f)=
%2\bar q\bar s-\bar r^2=2e^{-a-b}\nu\varepsilon\neq 0$,
Noting that $\Phi(l)\leq 0$ if $l(S)=0,$ we have
\begin{eqnarray*}
\overline{\Omega_f}\setminus\Omega_f
=
\begin{cases}
%\{l; \Phi(l)\geq 0, l(S)=0\}\cup
\{l; \Phi(l)=0, l(S)\geq 0\}
&
\varepsilon=1, \nu=1,\\
\{l; \Phi(l)\leq 0, l(S)=0\}
\cup\{l; \Phi(l)=0, l(S)\geq 0\}
&
\varepsilon=1, \nu=-1,\\
\{l; \Phi(l)\leq 0, l(S)=0\}\cup
\{l; \Phi(l)=0, l(S)\leq 0\}
&
\varepsilon=-1, \nu=1,\\
%\{l; \Phi(l)\geq 0, l(S)=0\}\cup
\{l; \Phi(l)=0, l(S)\leq 0\}
&
\varepsilon =-1, \nu=-1.\\
\end{cases}
\end{eqnarray*}

\noindent
Considering the natural projection $\g^*\to \h^*$ defined by restriction and denoting
$G\cdot l:=\Ad^*G(l)\res{\h}$ for $l\in \g^*$ , we have
\begin{eqnarray*}
(\overline{\Omega_f}\res\h)\setminus({\Omega_f}\res\h)
=
\begin{cases}
%\{l; \Phi(l)\geq 0, l(S)=0\}\cup
\{G\cdot S^*,G\cdot Q^*, 0\}&
\varepsilon=1, \nu=1,\\
\{G\cdot S^* , \pm G\cdot Q^*, \pm G\cdot R^*, 0 \}
&
\varepsilon=1, \nu=-1,\\
\{-G\cdot S^*,\pm  G\cdot Q^*, \pm G\cdot R^*, 0 \}&
\varepsilon=-1, \nu =1,\\
%\{l; \Phi(l)\geq 0, l(S)=0\}\cup
\{-G\cdot S^*,-G\cdot Q^*, 0 \}&
\varepsilon=-1, \nu=-1.
\end{cases}
\end{eqnarray*}

{\bf Realization of $\boldsymbol{\pi_f} \in \widehat G$:}
We realize $\pi_f$ by taking the Pukanszky polarization
$\h=\mathbb R\mbox{-span}\{Q, R, S\}$
on $L^2(\exp(\mathbb RA+\mathbb RB)\exp(\mathbb RP))$.
We note that $\g_0:=\mathbb R\mbox{-span}\{A, B, P\}$ is
a subalgebra, and $G=G_0H$, where $G_0:=\exp\g_0$.
Writing $g_0=g_0(a,b,p):=\exp(aA+bB) \exp( pP)$, $g_1=g_1(q,r,s)=\exp(qQ+rR+sS)$,
$h=h(t,u,v)=\exp(tA+uB)\exp vP$, we have
$$
\pi_f(g_0g_1)\xi(h)=\chi_f(h^{-1}g_0g_1g_0^{-1}h)\xi(g_0^{-1}h)
=\chi_{\Ad^*(g_0^{-1}h)(f)}(g_1)\xi(g_0^{-1}h).
$$
We take the left Haar measure $dg_0$, $dg_1$, and $dg$ on $G_0$, $H$,
and $G$, respectively, which are defined by transferring Lebesgue measures by
$(a,b,p)\in \mathbb R^3 \mapsto g_0(a,b,p)\in G_0$,
$(q,r,s)\in \mathbb R^3 \mapsto g_1(q,r,s)\in H$, and $dg=dg_0dg_1$.
Let $F\in L^1(G)$. For $l\in\h^*$ we denote
\begin{equation*}
\widehat{F}^\h(a,b,p,l):=\widehat{F}^\h(g_0(a,b,p))(l):=
\int_\h F(g_0g_1)\chi_l(g_1)dg_1,
\quad
a,b,p\in\mathbb R.
\end{equation*}
Then we have
\begin{eqnarray*}
\pi_f(F)\xi(h)&=&\int_{G_0H}\pi_f(g_0g_1)\xi(h)F(g_0g_1)dg_0dg_1\\
&:=&\int_{\mathbb R^6}F(g_0(a,b,p)g_1(q,r,s))
\pi_f(g_0(a,b,p)g_1(q,r,s))\xi(h)
da db dp dq dr ds\\
&=&\int_{G_0H}F(g_0g_1)
\chi_{\Ad^*(g_0^{-1}h)(f)}(g_1)\xi(g_0^{-1}h)dg_0dg_1\\
&=&\int_{G_0H}F(hg_0g_1)
\chi_{\Ad^*(g_0^{-1})(f)}(g_1)\xi(g_0^{-1})dg_0dg_1\\
&=&\int_{G_0H} F(hg_0^{-1}g_1)
\chi_{\Ad^* g_0(f)}(g_1)\xi(g_0)\Delta_{G_0}^{-1}(g_0)dg_0dg_1\\
&=&\int_{G_0}\widehat{F^{\h}}(hg_0^{-1})(\Ad^*g_0(f|_\h))
\Delta_{G_0}^{-1}(g_0)\xi(g_0)dg_0\\
&=:&\int_{G_0}\mathcal K_F(h,g_0)\xi(g_0)dg_0,
\end{eqnarray*}
where $\Delta_{G_0}$ is the modular function of $G_0$ and
\begin{eqnarray}\label{kerS}
\mathcal K_F(h, g_0) & =&\mathcal K_F(h(t,u,v), g_0(a,b,p))\\
\nn&=&\widehat{F^{\h}}(hg_0^{-1})(\Ad^*g_0(f|_\h))
\Delta_{G_0}^{-1}(g_0)\notag \\
\nn& =&\widehat{F^{\h}}(g_0(t-a,u-b, e^{\frac{a-b}{2}}(v-p))
(\Ad^*g_0(a,b,p)(f|_\h))e^{\frac{a-b}{2}}\\
\nn&=&\wh F^{\h}(t-a,u-b, e^{\frac{a-b}{2}}(v-p),\ve( (e^{-b}(\pm 1+\frac{p^{2}}{2})Q^*+(-e^{-\frac{a+b}{2}}p)R^*+e^{-a}S^*))
e^{\frac{1}{2}(a-b)}.
\end{eqnarray}

\subsubsection{The orbits $ \OM_{b^* B^*+\ve S^*},\ve=\pm1,b^*\in\R $}\label{vesstrclo}

Let $f=b^*B^*+\varepsilon S^*$. Then $\g(f)=\mathbb R\mbox{-span}\{B,Q\}$.
We have
\begin{eqnarray*}
&\Omega_f=
\Phi^{-1}(0)\cap\Phi_2^{-1}(0)\cap
(\mathbb RA^*+\mathbb RB^*+\mathbb RP^*+(\varepsilon\mathbb R^{+,0})Q^*
+\mathbb RR^*+\varepsilon\mathbb R^+S^*),
\end{eqnarray*}
where
$$\Phi(l)
=2l(Q)l(S)-l(R)^2,
\quad
\Phi_2(l)
=l(R)l(P)-2(l(B)-b^*)l(S).$$
Then
\begin{eqnarray}\label{bdry-4d-3}
\nn\overline{\Omega_f}\setminus\Omega_f =&
(\mathbb RA^*+\mathbb RB^*+\varepsilon\mathbb R^+Q^*)\cup
(\mathbb RA^*+\mathbb RB^*+\mathbb RP^*)\\
 =& \bigcup_{a^*\in\mathbb R}\Ad^*G(a^*A^*+\varepsilon Q^*)
\cup \bigcup_{x^*\in\mathbb R, \nu=\pm1}\Ad^*G(x^*X^*+\nu P^*)
\notag\\
& \bigcup_{a^*, \lambda\in\mathbb R}(a^*A^*+\lambda B^*). \label{bdry-4d-3'}
\end{eqnarray}

\medskip\noindent
{\bf Proof of \eqref{bdry-4d-3}:}
Since $$\overline{\Omega_f}\subset
\Phi^{-1}(0)\cap\Phi_2^{-1}(0)\cap
(\mathbb RA^*+\mathbb RB^*+\mathbb RP^*+\mathbb R^{+,0}\varepsilon Q^*
+\mathbb RR^*+\mathbb R^{+,0}\varepsilon S^*),$$
we first note that
$$
\overline{\Omega_f}\setminus\Omega_f
\subset\Phi^{-1}(0)\cap\Phi_2^{-1}(0)\cap
(\mathbb RA^*+\mathbb RB^*+\mathbb RP^*+\mathbb R^{+,0}\varepsilon Q^*
+\mathbb RR^*).
$$
In fact, let $l\in\overline{\Omega_f}$,  and
suppose $l(S)\neq 0$. Then there exists $g\in G$ such that
$\Ad^*g(l)=b'B^*+q'Q^*+\varepsilon S^*$. Since
$\overline{\Omega_f}$ is $\Ad^*G$-invariant, we have
$\Phi(\Ad^*g(l))=2q'\varepsilon=0$ and
$\Phi_2(\Ad^*g(l))=-2(b'-b^*)\varepsilon=0$,
thus we have $\Ad^*g(l)=f$, this is, $l\in\Omega_f$.

Let
$f_\infty=\bar aA^*+\bar bB^*+\bar pP^*+\bar qQ^*+\bar rR^*
\in\overline{\Omega_f}\setminus\Omega_f$. Then $\bar r=0$, since
$\Phi(f_\iy)=0$. Suppose
$\bar f=\lim_{k\to\infty}\Ad^*E(a_k,0,p_k,0,r_k, s_k)(f)$.
Since $\bar f(S)=\lim_{k}e^{-a_k}\varepsilon=0$, we have
$\lim_ka_k=\infty$.

Suppose $\bar q=\lim_k\frac 12 p_k^2\varepsilon\neq 0$, since
$\bar b =\lim_{k}\left(b^*-\frac{p_kr_k}{2}\varepsilon\right)$, we have that
the sequence $\{r_k\}$ is bounded and
$\bar p=\lim_k r_ke^{-\frac{a_k}{2}}\varepsilon =0$.
It follows that
$\overline{\Omega_f}\setminus\Omega_f\subset
(\mathbb RA^*+\mathbb RB^*+\varepsilon\mathbb R^+Q^*)\cup
(\mathbb RA^*+\mathbb RB^*+\mathbb RP^*)$.

Conversely, let $\bar f=\bar aA^*+\bar bB^*+\bar qQ^*$ with
$\bar q\in\varepsilon\mathbb R^+$.
When $a\to\infty$, $p\to\sqrt{2\bar q\varepsilon}$,
$r\to 2\varepsilon(b^*-\bar b)\sqrt{2\bar q\varepsilon}^{-1}$ and
$s\to \varepsilon(\bar a+\bar b-b^*)$,
we have $\Ad^*E(a,0,p,0,r,s)f\to\bar f$.
Suppose $\bar f=\bar aA^*+\bar bB^*+\bar pP^*$ with $\bar p\neq 0$,
and let
$p(a):=2(b^*-\bar b)\bar p^{-1}e^{-\frac a2}$,
$r(a):=\varepsilon\bar pe^{\frac a2}$, and
$s(a):=\varepsilon\bar a-\frac{p(a)r(a)}{2}$.
Then $\Ad^*E(a, 0, p(a), 0,r(a), s(a))f\to\bar f$ as $a\to\infty$.
Suppose $\bar f=\bar aA^*+\bar bB^*$. Let
$p(a):=2(b^*-\bar b)\varepsilon e^{-\frac a4}$,
$r(a):=e^{\frac a4}$,
$s(a):=\varepsilon\bar a-\frac{p(a)r(a)}{2}$.
Then $\Ad^*E(a, 0, p(a), 0,r(a), s(a))f\to\bar f$ as $a\to\infty$.
\qed

\medskip\noindent
{\bf Realization of $\pi_f \in \widehat G$:}
 Taking the Pukanszky polarization $\mathfrak b=\mathbb R\mbox{-span}\{B, Q, R, S\}$ and $F\in L^1(G)$,
we realize $\pi_f$ and $\pi_f(F)$ on $L^2(\exp(\mathbb RX+\mathbb RP))$,
where $X=A+B$,
as follows: Let
\begin{equation*}
E_0(x,p):=\exp(xX+pP),
h=h(b,q,r,s):=\exp(bB)\exp(qQ+rR+sS)
\in \exp(\mathfrak b),
\end{equation*}
\begin{equation*}
E_0(x,p)_{b}:=\exp(bB)E_0(x,p)\exp(-bB)\in\exp(\mathbb RX+\mathbb RP).
\end{equation*}
For $\xi\in L^2(\exp(\mathbb RX+\mathbb RP))$, we have
\begin{eqnarray*}
 && \pi_f(E_0(x,p)h(b,q,r,s))\xi(E_0(t,u))\\
&=&\chi(F)(E_0(t-x, u-p)^{-1}_{(-b)}h(0,q,r,s)E_0(t-x, u-p)_{(-b)})
\chi_f(\exp(bB))\\
&&
\left(\frac{\Delta_{\exp(\mathfrak b)}}{\Delta_G}(\exp(-bB))\right)^{\frac12}
\xi(E_0(t-x,u-p)_{(-b)})\\
&=&\chi_{\Ad^*E_0(t-x, u-p)_{(-b)}(f)}(h(0,q,r,s))
\chi_f(\exp(bB))
\left(\frac{\Delta_{\exp(\mathfrak b)}}{\Delta_G}(\exp(-bB))\right)^{\frac12}
\xi(E_0(t-x,u-p)_{(-b)})\\
& =& \exp\bigg({i\varepsilon e^{-t+x}\big(\frac12q(u-p)^2e^{b}-r(u-p)e^{\frac b2}+s\big)}
+ib^*b+\frac{b}{4}\bigg)
\xi(t-x, e^{\frac b2}(u-p)).
\end{eqnarray*}
We denote $\widehat{F}^\h(g)(l)=\int_{\mathfrak h}\chi_l(\exp(qQ+rR+sS))
F(g\exp(qQ+rR+sS))dqdrds$ for $l\in\mathfrak h^*$ satisfying
$l([\mathfrak h,\mathfrak h])=\{0\}$, and for integrable
functions $F$ as before.
We take the left Haar measure $dg$ on $G$ defined by
$$
\int_GF(g)dg:=\int_{\mathbb R^6} F(E_0(x,p)h(b,q,r,s))e^{\frac b2}dxdpdbdqdrds.
$$
Then we have
\begin{eqnarray*}
&&\pi_f(F)\xi(E_0(t,u))
=\int_{\mathbb R^6}F(E_0(x,p)h(b,q,r,s))\pi_f(E_0(x,p)h(b,q,r,s))
\xi(E_0(t,u))e^{\frac b2}dxdpdbdqdrds\\
&=&\int_{\mathbb R^6}F(E_0(x,p)h(b,q,r,s))
\chi_{\Ad^*E_0(t-x, u-p)_{(-b)}(f)}(h(0,q,r,s))
\chi_f(\exp(bB))\\
&&
\left(\frac{\Delta_{\exp(\mathfrak b)}}{\Delta_G}(\exp(-bB))\right)^{\frac12}
\xi(E_0(t-x,u-p)_{(-b)})e^{\frac b2}dxdpdbdqdrds\\
&=&\int_{\mathbb R^3}\widehat{F^{\h}}
(E_0(t-x, u-p)\exp(bB))\left(\Ad^*E_0(x,p)_{(-b)}(f|_{\h})\right)
\chi_f(\exp(bB))\\
& &
\left(\frac{\Delta_{\exp(\mathfrak b)}}{\Delta_G}(\exp(-bB))\right)^{\frac12}
\xi(E_0(x,p)_{(-b)})e^{\frac b2}dxdpdb\\
&=&\int_{\mathbb R^3}\widehat{F^{\h}}
(E_0(t-x, u-e^{-\frac{b}{2}}p)\exp(bB))\left(\Ad^*E_0(x,p)(f|_{\mathfrak h})\right)
\chi_f(\exp(bB))
\xi(E_0(x,p))e^{\frac b4}dxdpdb\\
&=&\int_{\mathbb R^3} \widehat{F^{\h}}
(E_0(t-x,u-e^{-\frac b2}p)\exp(bB))
\left(\varepsilon e^{-x}\big(\frac12 p^2 Q^*- pR^*+S^*\big)\right)
e^{ib^*b}e^{\frac b4}\xi(E_0(x,p))dxdpdb\\
&=: &\int_{\mathbb R^2}\mathcal K_F(t,u; x,p)\xi(E_0(x,p))dxdp,
\end{eqnarray*}
where
\begin{eqnarray}\label{fdthreeker}
 &&\mathcal{K}_F(t,u ; x,p)\\
\nn  &:=&
\int_{\mathbb R}
 \widehat{F^{\h}}
(E_0(t-x,u-e^{-\frac b2}p)\exp (bB))
\left(\varepsilon e^{-x}\big(\frac12 p^2 Q^*- pR^*+S^*\big)\right)
e^{ib^*b}e^{\frac b4}
db.
\end{eqnarray}

\subsubsection{The orbits $ b^* B^*+\ve R^*, b^*\in \R, \ve=\pm 1 $}

Let
$f=b^*B^*+\varepsilon R^*$. Then $\g(f)=\mathbb R\mbox{-span}\{S, A-B\}$.
Writing $X=A+B$, $Y=A-B$, $X^*=\frac 12(A^*+B^*)$, $Y^*=\frac 12(A^*-B^*)$,
we have
\begin{eqnarray}
\Omega_f=& \{\bar xX^*+\bar yY^*+\bar pP^*+\bar qQ^*+\bar rR^* ;\
\bar r\in\varepsilon\mathbb R^+, \bar y=-b^*-(\bar p\bar q/\bar r)\}
\notag,\\
\overline{\Omega_f}\setminus\Omega_f
\nn= &\{\bar xX^*+\bar yY^*+\bar pP^* ;\ \bar x,\bar y,\bar p\in\mathbb R\}
\cup\{\bar xX^*+\bar yY^*+\bar qQ^* ;\ \bar x,\bar y,\bar q\in\mathbb R\}
\label{bdry-4d-2}\\
\nn= &\bigcup_{\bar x\in\mathbb R, \varepsilon=\pm 1}\Ad^*G(\bar xX^*+\varepsilon P^*)
\bigcup_{a^*\in\mathbb R, \nu =\pm 1}\Ad^*G(a^*A^*+\nu Q^*)\notag\\
& \bigcup_{a^*, \lambda \in\mathbb R}\{a^*A^*+\lambda B^*\}.
\end{eqnarray}

{\bf Proof of \eqref{bdry-4d-2}:}
Recall that
\begin{eqnarray*}
\Ad^*E(\alpha, \alpha, p,q,r,0)\cdot f
%= &\frac12(pq+r)\varepsilon A^*
%+  \left(b^*+\frac{1}{2}(r-pq)\varepsilon\right)B^*
%+  q\varepsilon P^*
%-  e^{-\alpha}p\varepsilon Q^*
%+  e^{-\alpha}\varepsilon  R^*
%\\
= (b^*+r\varepsilon)X^* +(-b^*+pq\varepsilon)Y^*
+  q\varepsilon P^*
-  e^{-\alpha}p\varepsilon Q^*
+  e^{-\alpha}\varepsilon  R^*.
\end{eqnarray*}
Let $\Psi$ be a function on
$\mathbb R\mbox{-span}\{X^*, Y^*, P^*, Q^*, R^*\}$
defined by
$$
\Psi(x,y,p,q,r):=r(y+b^*)+pq \quad \text{for }\,
(x,y,p,q,r)\in \mathbb R\mbox{-span}\{X^*, Y^*, P^*, Q^*, R^*\}.
$$
Then $\overline{\Omega_f}\subset
\Psi^{-1}(0)\cap(\mathbb R^4\times (\varepsilon\mathbb R^{+,0}))$
and $\overline{\Omega_f}\setminus\Omega_f\subset
\Psi^{-1}(0)\cap(\mathbb R^4\times\{0\})
=\{(x,y,p,0,0); x,y,p\in\mathbb R\}\cup\{(x,y,0,q,0); x,y,q\in\mathbb R\}$.

Conversely, let $\bar x, \bar y,\bar p\in\mathbb R$ and suppose $\bar
p\neq 0$, then
\[\begin{array}{rl}
\Ad^*E(\alpha,\alpha, \frac{\bar y+b^*}{q},q\varepsilon, r\varepsilon,0)f=
& (b^*+r)X^*+\bar yY^*+qP^*-e^{-\alpha}\frac{\varepsilon(\bar y+b^*)}{q}Q^*+
e^{-\alpha}\varepsilon R^*\\
\to & \bar xX^*+\bar yY^*+\bar p P^*
\, \text{ as } \, \alpha\to\infty, q\to\bar p, r\to -b^*+\bar x.
\end{array}\]
If $\bar p=0$, we take
\[\begin{array}{rl}
\Ad^* E(\alpha,\alpha, \varepsilon(\bar y+b^*)e^{\frac{\alpha}{2}},
e^{-\frac{\alpha}{2}},r\varepsilon,0)f =
&(b^*+r)X^*+\bar yY^*+ e^{-\frac{\alpha}{2}}\varepsilon P^*
-(\bar y+b^*)e^{-\frac{\alpha}{2}}Q^*+e^{-\alpha}\varepsilon R^*\\
\to &
\bar xX^*+\bar yY^*+0 P^*
\, \text{ as }\, \alpha\to\infty, r\to -b^*+\bar x.
\end{array}\]
Thus we have $\bar xX^*+\bar yY^*+\bar pP^*\in\overline{\Omega_f}$.
We can similarly verify
$\bar xX^*+\bar yY^*+\bar qQ^*\in\overline{\Omega_f}$.
\qed

{\bf Realization of $\boldsymbol{\pi_f} \in \widehat G$:}
We take the Pukanszky polarization $\q=\mathbb R\mbox{-span}\{Y, Q, R, S\}$
at $f$ to  realize $\pi_f$.
Noting that $[X,P]=0$, let
\begin{equation*}
E_0(t,u):=\exp(tX+uP) \, \text{ and } \,
h=h(y,q,r,s):=\exp yY\exp(qQ+rR+sS).
\end{equation*}
We realize $\pi_f=\ind_{\exp(\mathfrak q)}^G\chi_f$
 on $L^2(\exp(\mathbb RX+\mathbb RP))$ as follows:
Noting that $[Y,\mathbb RX+\mathbb RP]\subset\mathbb RP$,
we write $E_0(t,u)_y:=\exp(yY)E_0(t, u)\exp(-yY)$.
Let $\xi\in L^2(\exp(\mathbb RX+\mathbb RP))$. Then
\begin{eqnarray*}
&&\pi_f(E_0(x,p)h(y,q,r,s))\xi(E_0(t,u))\\
&&=\chi_f(E_0(t-x,u-p)^{-1}_{(-y)}h(0,q,r,s)E_0(t-x, u-p)_{(-y)})\chi_f(\exp yY)\\
 && \qquad \left(\frac{\Delta_{\exp(\mathfrak q)}}{\Delta_{G}}(\exp(-yY))\right)^{\frac12}
\xi(E_0(t-x,u-p)_{-y})
\\
&&
=\chi_{\Ad^*E_0(t-x,u-p)_{(-y)}(f)}(h(0,q,r,s))
\chi_f(\exp yY)
\left(\frac{\Delta_{\exp(\mathfrak q)}}{\Delta_{G}}(\exp(-yY))\right)^{\frac12}
\xi(E_0(t-x,u-p)_{(-y)})\\
&&=\exp\left({-i\varepsilon(e^{-y}q(u-p)-r)e^{x-t}+ib^*y-\frac{y}{2}}\right)
\xi(t-x, e^{-y}(u-p)).
\end{eqnarray*}
We take the left Haar measure $dg$ on $G$ defined by
\begin{equation*}
\int_GF(g)dg:=\int_{\mathbb R^6}F(E_0(x,p)h(y,q,r,s))e^{-y}dxdpdydqdrds
\end{equation*}
for integrable functions $F\in L^1(G)$, and define
\begin{equation*}
\widehat{F^\mathfrak h}(g)(l):=\int_{\mathfrak h}
F(g\exp(qQ+rR+sS))\chi_l(\exp(qQ+rR+sS))dqdrds,
\end{equation*}
%where $\mathfrak h=\mathbb R\mbox{-span}\{Q,R,S\}$,
for $l\in\mathfrak h^*$ satisfying $l([\mathfrak h,\mathfrak h])=\{0\}$. Then
\begin{eqnarray*}
&&\pi_f(F)\xi(E_0(t,u))
=\int_{\mathbb R^6} F(E_0(x,p)h)\pi_f(E_0(x,p)h))\xi(E_0(t,u))e^{-y}dxdpdydqdrds\\
&=&\int_{\mathbb R^6} F(E_0(x,p)h)\chi_{\Ad^*E_0(t-x,u-p)_{(-y)}(f)}(h(0,q,r,s))
\chi_f(\exp yY)\\
&& {}
\left(\frac{\Delta_{\exp(\mathfrak q)}}{\Delta_{G}}(\exp(-yY))\right)^{\frac12}
\xi(E_0(t-x,u-p)_{(-y)})e^{-y}dxdpdydqdrds\\
&=&\int_{\mathbb R^6} F(E_0(t-x,u-p)h)\chi_{\Ad^*E_0(x,p)_{(-y)}(f)}(h(0,q,r,s))
\chi_f(\exp yY)\\
&& {}
\left(\frac{\Delta_{\exp(\mathfrak q)}}{\Delta_{G}}(\exp(-yY))\right)^{\frac12}
\xi(E_0(x,p)_{(-y)})e^{-y}dxdpdydqdrds\\
%%%%%%%%%
&=&\int_{\mathbb R^6} F(E_0(t-x, u-e^yp)h(y,q,r,s))\chi_{\Ad^*E_0(x,p)(f)}(h(0,q,r,s))
\chi_f(\exp yY)e^{-\frac y2}\xi(E_0(x,p))dxdpdydqdrds\\
&=&\int_{\mathbb R^3} \widehat{F^\h}
(E_0(t-x,u-e^yp)\exp(yY))(\Ad^*E_0(x,p)(f|_\h )
e^{ib^*y}e^{-\frac y2}\xi(E_0(x,p))dxdpdy\\
&=&\int_{\mathbb R^3} \widehat{F^\h}
(E_0(t-x,u-e^yp)\exp(yY))(-\varepsilon p e^{-x}Q^*+\varepsilon e^{-x}R^*)
e^{ib^*y}e^{-\frac y2}\xi(E_0(x,p))dxdpdy\\
&=:&\int_{\mathbb R^2}\mathcal K_f(t,u; x,p)\xi(E_0(x,p))dxdp,
\end{eqnarray*}
where
\begin{eqnarray}\label{kerndftwo}
\mathcal{K}_f(t,u ; x,p):=
\int_{\mathbb R} e^{ib^*y}e^{-\frac y2}\widehat{F^\h}
(E(t-x,u-e^yp)\exp yY)(-\varepsilon p e^{-x}Q^*+\varepsilon e^{-x}R^*)dy.
\end{eqnarray}

\subsubsection{The orbits $ \nu Q^*+\ve P^*, \nu,\ve=\pm1 $}

Let $f=\varepsilon P^*+\nu Q^*$ for $\varepsilon, \nu =\pm 1$.
Then we have $\g(f)=\mathbb R\mbox{-span}\{S, R\}$.
Let $X^*=\frac 12(A^*+B^*)$, $Y^*=\frac12(A^*-B^*)$.
\begin{eqnarray}\label{pstqst}
\nn\Omega_f= & \mathbb RA^*+\mathbb RB^*+ \mathbb R^+\varepsilon P^*
+\mathbb R^+\nu Q^*\notag,\\
\nn\overline{\Omega_f}\setminus\Omega_f
= & \left(\mathbb RA^*+\mathbb RB^*+ \mathbb R^+\varepsilon P^*\right)
\cup\left(\mathbb RA^*+\mathbb RB^*+ \mathbb R^+\nu Q^*\right)
\cup(\mathbb RA^*+\mathbb RB^*)
\\
\nn =& \bigcup_{\alpha^*\in\mathbb R}\left(\alpha^*X^*+\mathbb RY^*+\mathbb R^+\varepsilon P^*\right)
  \bigcup_{a^*\in\mathbb R}
\left(a^*A^*+\mathbb RB^*+\mathbb R^+\nu Q^*\right)\notag\\
 \nn&\bigcup_{a^*, b^*\in\mathbb R}\{a^*A^*+b^*B^*\}\\
\nn=& \bigcup_{\alpha^*\in\mathbb R}\Ad^*(G)(\alpha^*X^*+\varepsilon P^*)
 \bigcup_{{y^*\in\mathbb R}}\Ad^*(G)(y^*Y^*+\nu Q^*)\notag\\
 & \bigcup_{a^*, b^*\in\mathbb R}\{a^*A^*+b^*B^*\}.
\end{eqnarray}

We realize $\pi_f$ in $\widehat G$ by taking a Pukanszky polarization $\mathfrak l=\mathbb R\mbox{-span}\{P, Q, R, S\}$ and $\pi_f=\ind_{\exp(\mathfrak l)}^G\chi_f$. Writing
$g_0=E_0(a,b):=\exp(aA+bB)$, $n=n(p,q,r,s):=\exp(pP)\exp(qQ+rR+sS)$ and
$t_0=E_0(t,u):=\exp(tA+uB)$, we have
\begin{eqnarray*}
\pi_f(g_0n)\xi(t_0) & =& \chi_f((g_0^{-1}t_0)^{-1}n g_0^{-1}t_0)\xi(g_0^{-1}t_0))
=\chi_{\Ad^*(g_0^{-1}t_0)f}(n)\xi(g_0^{-1}t_0))\\
&=&\chi_{\Ad^*E_0(t-a,u-b)(f)}(n)\xi(E_0(t-a,u-b))\\
&=& \exp(i({\varepsilon p e^{\frac{-t+a+u-b}{2}}+\nu q e^{-u+b}}))\xi(E_0(t-a,u-b)).
\end{eqnarray*}
Let $F\in L^1(G)$ and $l\in\mathfrak l^*$ such that $l[\mathfrak l,\mathfrak l]=\{0\}$.
We define
$$
\widehat{F^\mathfrak l}(g_0)(l):=\int_{\mathfrak l}F(g_0 n(p,q,r,s))e^{il(pP+qQ+rR+sS)}dpdqdrds.
$$
Let $dg_0dn$ be the left Haar measure transfered the
Lebesgue measure $dadbdpdqdrds$ on $\g$ by the mapping
$(a,b,p,q,r,s)\mapsto g_0(a,b)n(p,q,r,s)$.
\begin{eqnarray*}
\pi_f(F)\xi(E_0(t,u))
&=&\int_G\pi_f(g_0 n)\xi(t_0)F(g_0 n)dg_0dn\\
&=&\int_{\mathbb R^6}\chi_{\Ad^*E_0(t-a,u-b)(f)}(n)\xi(E_0(t-a,u-b))F(E_0(a,b)n)dadbdpdqdrds\\
&=&\int_{\mathbb R^6}\chi_{\Ad^*E_0(a,b)(f)}(n)\xi(E_0(a,b))F(E_0(t-a,u-b)n)dadbdpdqdrds\\
&=:&\int_{\g/\mathfrak l}\widehat{F^{\mathfrak l}}(E_0(t-a,u-b))(\Ad^*E_0(a,b)(f|_\h))\xi(E_0(a,b))dadb\\
&=&\int_{\mathbb R^2}\widehat{F^{\mathfrak l}}(E_0(t-a,u-b))
(\varepsilon e^{\frac{-a-b}{2}}P^*+\nu e^{-b}Q^*)\xi(E_0(a,b))dadb
\\
&=:&\int_{\mathbb R^2}\mathcal K_F(t,u; a,b)\xi(E_0(a,b))dadb,
\end{eqnarray*} where
\begin{eqnarray}\label{kerdfourrone}
\mathcal K_F(t,u; a,b)& := &
\widehat{F^{\mathfrak l}}(E_0(t-a,u-b))(\Ad^*E_0(a,b)(f|_{\mathfrak l}))\notag\\
& = & \widehat{F^{\mathfrak l}}(E_0(t-a,u-b))
(\varepsilon e^{\frac{-a-b}{2}}P^*+\nu e^{-b}Q^*).
\end{eqnarray}

\subsubsection{The orbits $ a^* A^*+\ve Q^*, a^*\in\R, \ve=\pm1 $}

Let $f:= a^*A^*+ \ve Q^*$. Then
$\g(a^*A^*+\ve Q^*) =\mathbb R\mbox{-span}\{A, P, R, S\}$,
\begin{eqnarray}
%E(0,b,0,q,0,0)\cdot (a^*A^*+\nu Q^*)
% & =a^*A^*+ q\nu B^* + e^{-b}\nu Q^*\\
\Omega_f  & = a^*A^*+\mathbb R B^*+ \mathbb R^+\ve Q^*\notag,\\
\overline{\Omega_f}\setminus\Omega_f
& =a^*A^*+\mathbb R B^*+0\cdot Q^*
=\cup_{\lambda\in\mathbb R}(a^*A^*+\lambda B^*).
\end{eqnarray}
We realize $\pi_f$ in $\widehat G$ as $\pi_f=\ind_{\exp(\mathfrak l_A)}^G\chi_f$,
$\mathfrak l_A=\mathbb R\mbox{-span}\{A, P, Q, R, S\}$,
on $L^2(G/\exp(\mathfrak l_A),\chi_f)\simeq L^2(\exp\mathbb RB)$.
Write
\begin{equation*}
E_0(b):=\exp bB,
\quad
h=h(a,p,q,r,s):=\exp(aA)\exp(pP)\exp(qQ+rR+sS)
\end{equation*}
for $b\in\mathbb R$, $(a,p,q,r,s)\in\mathbb R^5$. We have
\[\begin{array}{rl}
\pi_f(E_0(b)h(a,p,q,r,s))\xi(E_0(t))= &
\chi_{\Ad^*E_0(t-b)(f)}(h)\xi(E_0(t-b))\\
= &
\exp(i(aa^*+q\ve e^{b-t}))\xi(E_0(t-b)),
\end{array}\]
\begin{eqnarray*}
\pi_f(F)\xi(E_0(t)) &=&
\int_{\mathbb R^6} F(E_0(b)h)\chi_{\Ad^*E_0(t-b)(f)}(h)\xi(E_0(t-b))dbdadpdqdrds\\
&=& \int_{\mathbb R^6} F(E_0(t-b)h)\chi_{\Ad^*E_0(b)(f)}(h)\xi(E_0(b))dbdadpdqdrds\\
&=& \int_\mathbb{R} \widehat{F^{\mathfrak l_A}}(E_0(t-b))(\Ad^*E_0(b)(f|_{\mathfrak l_A})\xi(E_0(b))db\\
&=& \int_{\mathbb{R}}\mathcal K_F(t,b)\xi(E_0(b))db,
\end{eqnarray*}
where
\begin{equation*}
\widehat{F^{\mathfrak l_A}}(g)(l):
=\int_{\mathfrak l_A}F(gh(a,p,q,r,s))\chi_l(h(a,p,q,r,s))dadpdqdrds
\end{equation*}
for $l\in\g^*$ such that $l([\mathfrak l_A,\mathfrak l_A])=\{0\}$, and
\begin{eqnarray*}
\mathcal K_F(t,b) :=\widehat{F^{\mathfrak l_A}}(E_0(t-b))(\Ad^*E_0(b)(f|_{\mathfrak l_A})
=\widehat{F^{\mathfrak l_A}}(E_0(t-b))(a^*A^*+\ve e^{-b}Q^*).
\end{eqnarray*}

\subsubsection{The orbits $ x^* X^*+\ve P^*, x^*\in\R, \ve=\pm1 $}

Let $f=\alpha^*(\frac{A^*+B^*}{2})+\varepsilon P^*
=\alpha^*X^*+\varepsilon P^*$,
$X=A+B$, $Y=A-B$, $X^*=\frac{A^*+B^*}{2}$, $Y^*=\frac{A^*-B^*}{2}$.
%we have $[X,P]=0$, $[Y,P]=P$.
Then
$\g(f)=\mathbb R\mbox{-span}\{X, Q, R, S\}$.
By \eqref{2d-1-1} and \eqref{2d-1-2}, we have
\begin{eqnarray}
\Omega_f & =\alpha^*X^*+\mathbb RY^*+\mathbb R^+\varepsilon P^*\notag,\\
\overline{\Omega_f}\setminus\Omega_f
& =\alpha^*X^*+\mathbb RY^*
=\bigcup_{\lambda\in\mathbb R}\{\alpha^*X^*+\lambda Y^*\}.
\end{eqnarray}
We realize $\pi_f$ in $\widehat G$ as $\pi_f=\ind_{\exp(\mathfrak l_X)}^G\chi_f$, where
$\mathfrak l_X=\mathbb R\mbox{-span}\{X, P, Q, R, S\}$,
on $L^2(G/\exp(\mathfrak l_X), \chi_f)\simeq L^2(\exp\mathbb RY)$.
We write
\begin{equation*}
E_0(y):=\exp(yY),
\quad
h=h(x,p,q,r,s):=\exp (xX)\exp( pP)\exp(qQ+rR+sS)
\end{equation*}
for $y\in\mathbb R$ and $(x,p,q,r,s)\in\mathbb R^5$,
and  transfer a Lebesgue measure on $\mathbb R^6$ to
a left Haar measure on $G$ by
$(x,y,p,q,r,s)\mapsto E_0(y)h(x,p,q,r,s)$. For $\xi\in L^2(G/\exp(\mathfrak l_X),\chi_f)$
we have
\[\begin{array}{rl}
\pi_f(E_0(y)h(x,p,q,r,s))\xi(E_0(t))=
%=\chi_f(E_0(t-y)^{-1}hE_0(t-y))\xi(t-y)
&\chi_{\Ad^*E_0(t-y)(f)}(h)\xi(E_0(t-y))\\
=&\exp\left(ix\alpha^*+ip\varepsilon e^{-t+y}\right)\xi(E_0(t-y)),
\end{array}\]
and for an integrable function $F$, we have
\begin{eqnarray*}
\pi_f(F)\xi(E_0(t))
&=&\int_{\mathbb R^6}
F(E_0(y)h(x,p,q,r,s))\pi_f(E_0(y)h(x,p,q,r,s))\xi(E_0(t))dydxdpdqdrds
\\
&=&\int_{\mathbb R^6} F(E_0(y)h)
\chi_{\Ad^*E_0(t-y)(f)}(h)\xi(E_0(t-y))dydxdpdqdrds
\\
&=&\int_{\mathbb R^6} F(E_0(t-y))h)
\chi_{\Ad^*E_0(y)(f)}(h)\xi(E_0(y))dydxdpdqdrds
\\
&=:&\int_{\mathbb R} \widehat{F^{\mathfrak l_X}}(E_0(t-y))(\Ad^*E_0(y)(f|_\h))
\xi(E_0(y))dy
\\
&= &\int_{\mathbb R} \widehat{F^{\mathfrak l_X}}(E_0(t-y))
(\alpha^*X^*+\varepsilon e^{-y}P^*)\xi(E_0(y))dy.
\end{eqnarray*}
For $l\in\g^*$ satisfying $l([\mathfrak l_X,\mathfrak l_X])=\{0\}$, we write
\begin{equation*}
\widehat{F^{\mathfrak l_X}}(g)(l):=\int_{\mathfrak l_X}e^{il(Z)}F(g\exp Z)dZ,
\
g\in G,
\end{equation*}
with some fixed Lebesgue measure $dZ$ on $\h$. And
$\pi_f(F)$ is described by the operator
$\pi_f(F)\xi(E_0(y))=\int_\mathbb R\mathcal K_f(t,y)\xi(E_0(y))dy$, where
\begin{equation}\label{ker2d1}
\mathcal K_f(t,y):=\widehat{F^{\mathfrak l_X}}(E_0(t-y))(\Ad^*E_0(y)(f|_{\mathfrak l_X}))
=\widehat{F^{\mathfrak l_X}}(E_0(t-y))
(\alpha^*X^*+\varepsilon e^{-y}P^*).
\end{equation}

\section{The continuity  and infinity conditions}\label{contcon}
In this short section we find the continuity and infinity conditions for $C^*(G)$.
%\subsection{}
\begin{definition}\label{conin}
\rm   We say that a net $(\ga_i)_{i\in I}$ in  a topological space $\GA$ goes to infinity, if the net contains no converging subnet.
In particular, a sequence of orbits $\ol \OM=(\OM_k)_{k\in\N}\subset \g^*$ goes to infinity, if for any compact subset $K\subset \g^*$ there exists an index $k_0$ such that $K\cap \OM_k=\es$ whenever $k\geq k_0$.
\end{definition}

\begin{proposition}[Riemann-Lebesgue Lemma]\label{infycon}
Let $A$ be a C*-algebra. If a net $(\pi_k)_k\subset \wh A$ goes to infinity, then $\lim_k \noop{\pi_k(a)}=0$ for all $a\in A$.
 \end{proposition}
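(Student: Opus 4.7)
The plan is to argue by contradiction, combining weak-$*$ compactness of the pure state space with the continuity of the GNS construction on pure states. Since $\noop{\pi(a)}^2=\noop{\pi(a^*a)}$, I would first replace $a$ by the positive element $a^*a$ and assume from the outset that $a\geq 0$.

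Suppose the conclusion fails. Passing to a subnet we may assume there exists $\ve>0$ with $\noop{\pi_k(a)}\geq \ve$ for every index $k$. Since $\pi_k(a)\geq 0$, its operator norm is realised as the supremum of $\sp{\pi_k(a)\xi}{\xi}$ over unit vectors, and one can choose unit vectors $\xi_k\in \H_{\pi_k}$ with
\begin{equation*}
\ph_k(a):=\sp{\pi_k(a)\xi_k}{\xi_k}\geq \ve/2.
\end{equation*}
Because $\pi_k$ is irreducible, the nonzero vector $\xi_k$ is cyclic, so the state $\ph_k(b):=\sp{\pi_k(b)\xi_k}{\xi_k}$ is a pure state on $A$ whose GNS representation is unitarily equivalent to $\pi_k$.

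Next, the set $P(A)\cup \pa 0$ of pure states together with the zero functional is weak-$*$ compact in $A^*$ (Banach--Alaoglu applied to the unit ball of $A^*$, with $0$ adjoined to account for the non-unital case). Extract a subnet $(\ph_{k_j})_j$ converging weak-$*$ to some $\ph\in P(A)\cup \pa 0$. Since $\ph(a)=\lim_j\ph_{k_j}(a)\geq \ve/2>0$, the limit $\ph$ is nonzero, hence a pure state with some GNS representation $\pi_\ph\in\wh A$.

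Finally I would invoke the classical result of Dixmier (\emph{Les C*-alg\`ebres et leurs repr\'esentations}, \S 3.4) asserting that weak-$*$ convergence of pure states entails convergence of their GNS representations in the Jacobson topology on $\wh A$. This yields $\pi_{k_j}\to \pi_\ph$ in $\wh A$, contradicting the hypothesis that $(\pi_k)$ has no convergent subnet. The delicate step, and the one on which the whole argument rests, is precisely this last passage from weak-$*$ limits of pure states to limits of the corresponding points of $\wh A$; rather than reproducing its proof we would cite Dixmier directly.
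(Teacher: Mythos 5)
Your reduction to $a\geq 0$ and the choice of unit vectors $\xi_k$ with $\langle \pi_k(a)\xi_k,\xi_k\rangle\geq \ve/2$ are fine, and these vector states are indeed pure because each $\pi_k$ is irreducible. The gap is the compactness claim: the set $P(A)\cup\{0\}$ is \emph{not} weak-$*$ compact for a general C*-algebra. Banach--Alaoglu gives compactness of the positive part of the unit ball (the quasi-state space), but $P(A)\cup\{0\}$ need not be weak-$*$ closed in it: a weak-$*$ limit of pure states can be a mixed state. This is true for commutative $A$ (limits of characters are characters or $0$), but fails already for $A=\{f\in C([0,1],M_2):\ f(0)\ \mathrm{diagonal}\}$, where the vector states of the two-dimensional evaluations at points $x_k\to 0$ with vector $(1,1)/\sqrt 2$ converge to the mixed state $\tfrac12(\chi_{11}+\chi_{22})$; for non-type-I algebras Glimm's theorem even makes \emph{every} state a weak-$*$ limit of pure states. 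And it is exactly the phenomenon occurring in this paper: a sequence $(\pi_k)\subset \GA_i$ approaching the boundary $S_{i-1}$ has a whole limit \emph{set} of representations, and vector states of the $\pi_k$ typically converge to mixtures (or integrals) of states attached to those limits. Consequently your limit functional $\varphi$ need not be pure, and the Dixmier result you invoke (continuity of the map from pure states to $\wh A$, i.e.\ weak-$*$ convergence of pure states \emph{to a pure state} implies convergence of the GNS representations) does not apply as stated.

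The argument can be repaired, but then it essentially reproduces Dixmier's proof of the fact the paper actually quotes. Take the weak-$*$ cluster point $\varphi$ in the compact quasi-state space $\{f\in A^*:\ f\geq 0,\ \Vert f\Vert\leq 1\}$; then $\varphi(a)\geq \ve/2$, so $\varphi\neq 0$, but since $\varphi$ may fail to be pure you must use the finer statement (Dixmier, Sections 3.3--3.4, in particular 3.4.2 together with 3.3.2) that when a nonzero positive form is a weak-$*$ cluster point of states associated with the $\pi_k$, then any irreducible representation occurring in the GNS representation of $\varphi$ is a cluster point of $(\pi_k)$ in $\wh A$; this contradicts the assumption that the net goes to infinity. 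The paper short-circuits all of this by citing Dixmier, Proposition 3.3.7: for every $c>0$ the set $\{\pi\in\wh A:\ \noop{\pi(a)}\geq c\}$ is quasi-compact, so a net with no convergent subnet must eventually leave each such set, which is precisely the assertion $\lim_k\noop{\pi_k(a)}=0$. So either cite 3.3.7 directly, or replace the false compactness of $P(A)\cup\{0\}$ by compactness of the quasi-state space plus the cluster-point statement just described.
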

\begin{proof} We know from \cite[Proposition 3.3.7]{Di} that for every $ c>0$ and $a\in A$, the subset $\{\pi\in\wh A; \noop{\pi(a)}\geq c\}$ is quasi-compact. This shows that $\lim_k \noop{\pi_k(a)}=0$, if the net $(\pi_k)_k$ goes to infinity.
\end{proof}

In the following we recall the definition of properly converging sequences and their limit sets, we also give a proposition of properly converging sequences in our group.

%\subsection{Limit sets of properly converging sequences}

\begin{definition}\label{limits}
\rm   Let $ Y $ be a topological space. Let
$\ol y= (y_k)_k $ be a net in $ Y $. We denote by $ L(\ol y) $ the set of
all limit points of the net $ \ol y $.
A net $\ol y$ is called \textit{properly converging} if $
\ol y $ has limit points and if every cluster point of the net is a limit
point, i.e. the set of limit points of any subnet is always the same, indeed, it equals to $ L(\ol y) $.
\end{definition}

We know that every converging net in $ Y $ admits a properly converging subnet, hence, we can work with properly
converging nets in our space $\wh G$.

\begin{definition}\label{fourhausd}
\rm  Let $\ve=\pm 1$, and
\begin{itemize}
\item
$\R_{\ve P^*}:=\{\Ad^*(G)(x^*X^*+ \ve P^*)=x^*X^*+\Ad^*(G)(\ve P^*),  x^*\in\mathbb R \}$,
\item
$\R_{\ve Q^*}:=\{\Ad^*(G)(a^*A^*+ \ve Q^*)=a^*A^*+\Ad^*(G)(\ve Q^*),  a^*\in\mathbb R\}$,
\item
$\R_{\ve R^*}:= \{\Ad^*(G)(b^*B^*+\varepsilon R^*)=b^*B^*+\Ad^*(G)(\varepsilon R^*), b^*\in\mathbb R\}$, 
\item
$\R_{\ve S^*}:= \{\Ad^*(G)(b^*B^*+\varepsilon S^*)=b^*B^*+\Ad^*(G)(\varepsilon S^*), b^*\in\mathbb R\}$.
\end{itemize}
\end{definition}

\begin{proposition}\label{limpropcon}
Let $ \ul\OM=(\OM_k)_{k\in \N} $ be  a properly converging sequence in $ \g^*/G $. Then there exists a subsequence (also denoted by $\ul\OM= (\OM_k)_{k\in \N} $ for simplicity) such that either $ \ul\OM $ is  a constant sequence, or if $ \ul\OM $ is not constant, it is contained in one of the subsets in Definition \ref{fourhausd} and its limit set is the closure in $ \g^*/G $ of one of the points of this subset or all the elements  of the subsequence are  characters and the limit set is a single character.
\end{proposition}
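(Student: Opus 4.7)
The plan is to pass to successive subsequences that isolate a single orbit family, and then exploit the fact that within each family the orbits are parametrized either by a finite set or by $\R$.

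First I would pass to a subsequence in which every $\Omega_k$ has the same dimension and the same orbit type from the list of Section~\ref{desor}, then further fix any signs $\ve,\nu\in\{\pm 1\}$ that appear. Three cases have ``finite'' parameter space and dispose of themselves immediately. The type $(4d\text{-}1)$ orbits $\Omega_{\ve P^*+\nu Q^*}$ and the type $(6d)$ orbits $\Omega_{\nu Q^*+\ve S^*}$ each consist of only four distinct orbits, so after fixing the signs the subsequence is constant. The $0$-dimensional orbits $\{a^*A^*+b^*B^*\}$ form $\R^2$, which is Hausdorff in $\g^*/G$; proper convergence then yields $(a_k^*,b_k^*)\to(a_\infty^*,b_\infty^*)$ and a single limit character.

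The substantial cases are the $\R$-parametrized families $(2d\text{-}1)$, $(2d\text{-}2)$, $(4d\text{-}2)$, $(4d\text{-}3)$. After fixing signs, the $k$-th orbit is determined by a single real parameter $t_k$ (namely $\alpha_k^*$, $a_k^*$, $b_k^*$, $b_k^*$ respectively), and the subsequence lies in $\R_{\ve P^*}$, $\R_{\ve Q^*}$, $\R_{\ve R^*}$, or $\R_{\ve S^*}$ of Definition~\ref{fourhausd}. If $t_k$ is bounded, a further subsequence gives $t_k\to t_\infty\in\R$, and I claim the limit set equals $\overline{\Omega_\infty}$, where $\Omega_\infty$ is the orbit with parameter $t_\infty$ (its closure having been computed in Sections~\ref{vesstrclo}--\ref{openor} and the analogous orbit sections). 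The inclusion $\overline{\Omega_\infty}\subset L(\underline{\Omega})$ follows by approximating interior points of $\Omega_\infty$ by parallel points in $\Omega_k$ (continuous dependence on $t$) and boundary points via the parametrizations already used to compute the boundaries; the reverse inclusion follows because any convergent $l_k\in\Omega_k$ must satisfy the defining equations in the limit, forcing $\lim l_k\in\overline{\Omega_\infty}$.

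The main obstacle is to rule out $|t_k|\to\infty$. For $(2d\text{-}1)$ and $(2d\text{-}2)$, the parameter is the value of a linear functional ($2l(X^*)$ or $l(A)$) which is constant on each orbit; hence $|t_k|\to\infty$ sends every point of $\Omega_k$ out of any compact set in $\g^*$, so the sequence has no limit points, contradicting the hypothesis of proper convergence. For $(4d\text{-}2)$ and $(4d\text{-}3)$ the parameter $b^*$ is not a single coordinate but appears only in the defining relation, e.g.\ $\bar y=-b^*-\bar p\bar q/\bar r$ for $(4d\text{-}2)$ or through $\Phi_2$ for $(4d\text{-}3)$. Here any convergent $l_k\in\Omega_k$ with $b_k^*\to\infty$ is forced into the degenerate regime $\bar r_k\to 0$, and by scaling $\bar p_k,\bar q_k,\bar r_k$ differently one can produce limit points in several distinct lower-dimensional orbits (for instance various $(4d\text{-}1)$ orbits $\Omega_{\ve' P^*+\nu' Q^*}$), violating the proper-convergence requirement that every subnet has the same limit set. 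This ``unbounded parameter implies not properly converging'' step for the four-dimensional families is the delicate part of the argument; in every remaining case the conclusion reduces to continuity of the orbit family in its real parameter together with the boundary descriptions already obtained.
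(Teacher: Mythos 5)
Most of your reduction is fine and parallels the paper: fixing the orbit type and the signs, disposing of the finite families $(4d\text{-}1)$, $(6d)$ and of the characters, and the bounded--parameter case of the $\R$-parametrized families (there your identification of the limit set with $\ol{\OM_\infty}$ is correct). For $(2d\text{-}1)$ and $(2d\text{-}2)$ your exclusion of $|t_k|\to\infty$ is also sound, because there the parameter is the value of a fixed linear form ($l(X)$, resp.\ $l(A)$) which is constant on each orbit, so unbounded parameters leave every compact set and the sequence has no limit points at all, contradicting proper convergence.

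The genuine gap is exactly the step you flag for $\R_{\ve R^*}$ and $\R_{\ve S^*}$: your proposed contradiction is not a contradiction. Proper convergence (Definition \ref{limits}) does not forbid a large limit set containing several mutually non-comparable orbits; it only requires that every cluster point be a limit point. Concretely, take $\OM_k=\Ad^*G(kB^*+R^*)$, $k\to\infty$. Using \eqref{4d-2-2}, for any $v$ with $v(R)=v(S)=0$ and $v(P)v(Q)<0$ the points with coefficients $\bar p=v(P)$, $\bar q=v(Q)$, $\bar r_k=-v(P)v(Q)/(k+v(Y))\in\R^+$ lie in $\OM_k$ and converge to $v$; conversely any convergent $l_k\in\OM_k$ has a limit with $l(R)=l(S)=0$ and $l(P)l(Q)\le 0$. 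Hence every subsequence and every subnet has the same limit set, so this sequence \emph{is} properly converging, while its limit set contains both $\OM_{P^*-Q^*}$ and $\OM_{-P^*+Q^*}$, neither of which lies in the closure of the other (nor in the closure of any orbit $b^*B^*+R^*$, by \eqref{bdry-4d-2}). So the argument ``unbounded parameter $\Rightarrow$ several incomparable limit orbits $\Rightarrow$ not properly converging'' fails, and the exclusion of $|t_k|\to\infty$ cannot be obtained this way. The paper's proof does not argue via a bounded/unbounded dichotomy at all: it writes the subsequence as $\OM+\la_kU^*$ with $\OM$ a fixed orbit and $U^*\in\{X^*,A^*,B^*\}$, chooses an ideal $\m\supset[\g,\g]$ with $\g=\R U+\m$, and uses that on $\OM$ one has $v(U)=\va(v|_{\m})$ for a function $\va$ of the restriction to $\m$ (constant in the two-dimensional cases, $v(B)=v(A)+v(P)v(Q)/v(R)$ on the orbit of $\ve R^*$, $v(B)=v(P)v(R)/(2v(S))$ on the orbit of $\ve S^*$); since $(\la_kU^*+v_k)|_{\m}=v_k|_{\m}$, convergence of $\la_kU^*+v_k$ is used to force convergence of $\la_k$, i.e.\ the convergence of the parameter is \emph{deduced} from the existence of a limit point rather than assumed. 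Note, however, that for the four-dimensional families this $\va$ is continuous only where $v(R)\neq 0$ (resp.\ $v(S)\neq 0$), which is precisely the boundary regime occurring in the example above; so the point you called delicate is indeed the crux, and neither your sketched exclusion argument nor a routine appeal to proper convergence settles it.
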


\begin{proof}
The orbit space consists of four open orbits of the eight sets given in Definition \ref{fourhausd}, of the four orbits $\nu P^*+\ve Q^*, \ve,\nu\in\{1,-1\},$ and of the set of characters $ \X $. Hence, if the sequence $ \ul\OM $ contains a constant subsequence, then the limit set of the sequence itself is the closure of this constant element, since it is properly converging. If the sequence is not constant, then we can suppose that a subsequence is contained in one of the 8 sets or is made out of characters. If we deal with non-characters, then  we can write $ \ul\OM={(\OM+\la_k U^*)}_{k\in\N} $, where $ \la_k\in\R $ and $ U^* $ is one of the characters $ X^*,A^* $ or $ B^* $, and $ \OM $ is a fixed orbit in one of sets $ \R_{\ve P^*}, \R_{\ve Q^*}, \R_{\ve R^*}, \R_{\ve S^*} $. Since the sequence $ \ul \OM $ is properly converging, for any $ v $ in a limit orbit $ \OM_\iy $, there exists a sequence $ (v_k)_k\subset \OM $ and a real sequence $ (\la_k)_k $ such that $ \la_k U^*+v_k $ converges in $ \g^* $ to $ v$. We can write $ \g=\R U+\m $, where $ \m $ is an
ideal of $ \g $ containing $ [\g,\g]  $. Then by the formulas in Subsection \ref{desor}, it follows that $ v_k(U)=\va(v_k{\res \m}) $, where $ \va:\
\m^*\to \R $ is  a continuous
function. Hence the sequence $ (\la_k+\va(v_k{\res\m}))_k $ is convergent with a limit $ \la+v(U) $ and so the sequence  $ (\la_k)_k $ converges to some $ \la $. Hence the limit orbit $ \OM_\iy $ is contained in $ \ol{\OM+\la U^*} =\la U^*+\ol \OM $, and so the limit set of the sequence $ \ul \OM  $ is the set $ \ol{\OM+\la U^*} $.
\end{proof}

%\subsection{}

We define now the subsets $S_j, j=0,\cdots, 6$, which we shall need for the definition of the algebra $D^*(G)$ in the later section.

\begin{definition}\label{defgais}
\rm Let
\begin{itemize}\label{}
\item $ \GA_0=\X =\{a^*A^*+b^*B^*, a^*,b^*\in\R\}$,
\item  $ \GA_1:=\{\R_{\ve P^*},\ve=\pm 1\} $,
\item  $ \GA_2:=\{\R_{\nu Q^*},\nu=\pm 1\} $,
\item  $ \GA_3:=\{\Omega_{\ve P^*+\nu Q^*}, \nu,\ve= \pm 1\} $,
\item  $ \GA_4:=\{\R_{\ve R^*},\ve=\pm 1\} $,
\item  $ \GA_5:=\{\R_{\ve S^*}, \ve=\pm1\}$,
\item  $ \GA_6:=\{\Omega_{\ve S^*+\nu Q^*},\ve,\nu=\pm 1\} $.
 \end{itemize}
Let $ S_i:=\bigcup_{j=0}^i \GA_j\subset  \g^*/G  $. Then $ S_i $ is closed in $ S_{i+1}$ for $i=0,\ldots,5$ with respect to the orbit topology. The set $\GA_6$ is finite and open in $\wh G$ while $\GA_3$ is finite and open in $S_3$. The sets $\GA_1,\GA_2,\GA_4$ and $\GA_5$ are homeomorphic to two disjoint copies of the real line $\R$.
\end{definition}

\begin{theorem}\label{contonR}
The mappings  $ \pi\mapsto \pi(F) $, $F\in C^*(G) $, are  norm-continuous on the subsets
 $\GA_i$ of $ \wh{G} $ for all $i=0,\ldots, 6$.
\end{theorem}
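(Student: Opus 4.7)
The plan is standard: reduce to a norm-dense $*$-subalgebra via the uniform bound $\noop{\pi_\ga(F)}\le\no F_{C^*(G)}$, dispose of the trivial cases, and handle the one-parameter families by analyzing the explicit integral kernels given in Section~\ref{topwhG}. The subspace
\[
\mathcal N_i:=\{F\in C^*(G): \ga\mapsto \pi_\ga(F)\hbox{ is norm-continuous on }\GA_i\}
\]
is closed in $C^*(G)$ by the triangle inequality and the uniform bound, so it suffices to check $C_c^\iy(G)\subset\mathcal N_i$ for every $i$.

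Two cases are essentially trivial. For $i\in\{3,6\}$, $\GA_i$ consists of only four orbits, each a single point of the orbit space, so $\GA_i$ is finite discrete in the relative topology and the claim is vacuous. For $i=0$, the set $\GA_0=\X$ is homeomorphic to $\R^2$ via $(a^*,b^*)\mapsto a^*A^*+b^*B^*$, each $\pi_\ga$ is a one-dimensional character, and
\[
\pi_{(a^*,b^*)}(F)=\int_G F(g)\chi_{a^*A^*+b^*B^*}(g)\,dg
\]
depends continuously on $(a^*,b^*)$ for $F\in L^1(G)$ by dominated convergence.

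The substance lies in the one-parameter families $\GA_1,\GA_2,\GA_4,\GA_5$, each homeomorphic to two disjoint copies of $\R$ via a scalar orbit parameter. Section~\ref{topwhG} realizes each $\pi_\ga$ on a fixed Hilbert space $\H_i=L^2(\R^{d_i})$ as an integral operator with explicit kernel $\K_F(\,\cdot\,,\,\cdot\,;\ga)$: formula \eqref{ker2d1} for $\GA_1$, the analogous formula displayed just before Section 4.1.6 for $\GA_2$, \eqref{kerndftwo} for $\GA_4$, and \eqref{fdthreeker} for $\GA_5$. Each kernel is built from the partial Fourier transform $\widehat{F^\h}(g)(\ell)$ composed with $\ga$-dependent arguments in $\h^*$ and continuous $\ga$-dependent weights. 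For $F\in C_c^\iy(G)$, $\widehat{F^\h}$ is smooth, compactly supported in its group argument $g$, and Schwartz in its linear-dual argument $\ell$, yielding pointwise convergence $\K_F(\,\cdot\,,\,\cdot\,;\ga_k)\to \K_F(\,\cdot\,,\,\cdot\,;\ga)$ whenever $\ga_k\to\ga$ in $\GA_i$. I then promote this to $L^2(\R^{d_i}\ti\R^{d_i})$-convergence of the kernels by dominated convergence and conclude via the Hilbert-Schmidt bound $\noop{\pi_{\ga_k}(F)-\pi_\ga(F)}\le\|\K_F(\,\cdot\,;\ga_k)-\K_F(\,\cdot\,;\ga)\|_{L^2}\to 0$, which together with the reduction step settles the theorem.

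The main obstacle is producing a $\ga$-uniform $L^2$-dominant on compact subsets of $\GA_i$. In \eqref{fdthreeker} and \eqref{kerndftwo}, the kernels involve an inner integration over $b$ or $y\in\R$ with unbounded weights such as $e^{b/4}$ or $e^{-y/2}$; these must be absorbed by the Schwartz decay of $\widehat{F^\h}$ in the $\ga$-dependent linear-dual variables, whose $Q^*,R^*,S^*$-components carry factors like $e^{-x}$, $e^{-(a+b)/2}$ that supply the necessary rapid decay in the outer variables $(t,u,x,p)$. Local uniformity in the scalar orbit parameter then follows because the weights and the arguments of $\widehat{F^\h}$ depend continuously on $\ga$; once this bookkeeping is in place, the dominated-convergence step is routine.
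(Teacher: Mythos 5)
There is a genuine gap at the decisive quantitative step: the Hilbert--Schmidt bound you invoke for $\GA_1,\GA_2,\GA_4,\GA_5$ is not available. For $F\in C_c^\iy(G)$ the operators $\pi_\ga(F)$, $\ga\in\GA_i$, are in general \emph{not} Hilbert--Schmidt, and neither are the differences $\pi_{\ga_k}(F)-\pi_\ga(F)$. Look at \eqref{fdthreeker}: the kernel is supported on a tube $\{|t-x|\le M,\ |u-e^{-b/2}p|\le M,\ |b|\le M\}$ of infinite measure, and the Schwartz decay of $\widehat{F^\h}(g)(\ell)$ in $\ell$ only helps in the direction $x\to-\iy$, where $e^{-x}\to\iy$; as $x\to+\iy$ the dual argument $\ve e^{-x}(\tfrac12 p^2Q^*-pR^*+S^*)$ tends to $0$ and the kernel tends to values of $\widehat{F^\h}(\cdot)(0)=\int_\h F(\cdot h)\,dh$, which do not vanish for generic $F$. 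Changing $b^*$ only changes the factor $e^{ib^*b}$ inside the $b$-integral, so the kernel of $\pi_{b^*_kB^*+\ve S^*}(F)-\pi_{b^*B^*+\ve S^*}(F)$ is pointwise $O(|b_k^*-b^*|)$ but again has no decay as $x\to+\iy$; its $L^2$-norm over the tube is infinite, so the claimed ``$\ga$-uniform $L^2$-dominant'' does not exist and the inequality $\noop{\pi_{\ga_k}(F)-\pi_\ga(F)}\le\|\K_F(\cdot;\ga_k)-\K_F(\cdot;\ga)\|_{L^2}$ is vacuous. (This is also forced structurally: by Theorems \ref{comdfourth} and \ref{expcompcon}, $\pi_\ga(F)$ can only be compact --- a fortiori Hilbert--Schmidt --- when $\wh F$ vanishes on the boundary of the orbit, so no dense $*$-subalgebra can consist of elements with HS images. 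The same objection applies verbatim to \eqref{ker2d1}, \eqref{kerndftwo} and the $\GA_2$ kernel.)

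The argument can be repaired in two ways. The paper's route is much shorter: within each $\GA_i$ the representations are all of the form $\ch\otimes\pi$ for one fixed representation $\pi$ and unitary characters $\ch$ of $G$ varying continuously with the orbit parameter (the orbits in $\GA_i$ differ by translation by multiples of $A^*$, $B^*$ or $X^*$), and then $\noop{\ch_k\otimes\pi(F)-\ch\otimes\pi(F)}=\noop{\pi(\ch_k\cdot F)-\pi(\ch\cdot F)}\le\no{\ch_k\cdot F-\ch\cdot F}_1\to0$ by dominated convergence, for $F\in L^1(G)$; density of $L^1(G)$ in $C^*(G)$ finishes it, with no kernel analysis at all. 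Alternatively, if you want to keep your kernel computation, replace the Hilbert--Schmidt estimate by the Schur-test/Young's inequality bound of Remark \ref{remark_pre}(3): the kernel difference is dominated by $|\ga_k-\ga|$ (in the scalar orbit parameter) times a fixed convolution-type profile $\ps(s-x,\dots)$ with $\ps\in C_c$, giving $\noop{\pi_{\ga_k}(F)-\pi_\ga(F)}\le C_F|\ga_k-\ga|$; this is exactly the type of estimate the paper uses elsewhere (e.g.\ in Proposition \ref{ve-sign}), and it succeeds precisely where the $L^2$ bound fails.
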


\begin{proof}
This follows from the following more general result for the C*-algebra of a second countable  locally compact group $G$.

Let $(\ch_k)_k$ be  a sequence of unitary characters of $G$, which converges pointwise to a unitary character $\ch$. Let $(\pi,\H)$ be a unitary representation of $G$. Then for every $F\in L^1(G)$, we have that
\begin{eqnarray*}
 \lim_{k\to\iy}\noop{\ch_k\otimes \pi(F)-\ch\otimes \pi(F)}=0.
 \end{eqnarray*}
Indeed, it suffices to remark that
\begin{eqnarray*}
 \noop{\ch_k\otimes \pi(F)-\ch\otimes \pi(F)}=\noop{\pi(\ch_k\cdot F)-\pi(\ch\cdot F)}\leq \no{\ch_k\cdot F-\ch\cdot F}_1
 \end{eqnarray*}
and therefore by the Lebesgue's theorem of dominated convergence, $\lim_{k\to\iy}\noop{\ch_k\otimes \pi(F)-\ch\otimes \pi(F)}=0$.
\end{proof}

\section{The boundary conditions for the C*-algebra}\label{bound}

For each one of our subsets $\GA_i$ defined in Definition \ref{defgais}, we must find the linear mappings $\si_{i,\de},\de>0, $ which give the structure of the C*-algebra of $G$ an almost $C_0(\K)$-algebra. These mappings $\si_{i,\de}$ will be built on the representations coming from the different sets $\GA_i$. Hence, this forces us to make a case by case study of the different situations.

%\subsection{Preliminaries}\label{prel}

We indicate here some definitions and methods, which will be used in the proofs of this (long) section.

\begin{remark}\label{remark_pre}
\rm
\begin{enumerate}\label{}
%\item
%\begin{definition}\label{ellta}
%\rm $ $
%For a closed subgroup $H$ of a locally compact group $G$ and a unitary character $\ch$ of $H$  let
%\begin{eqnarray*}\label{deftaell}
%\si_\ch&:=&\ind_H ^G \ch
%\end{eqnarray*}
%be the induced representation of $\ch$ from $H$ to $G$.
% \end{definition}

\item
For a measurable subset $ S $ of  a measure space $(X,\mu)$, denote
 the multiplication operator with the characteristic function $ 1_{S} $ on $ L^2(X,\mu) $ by $ M_{S} $.

\item  Let $H=\exp(\h)$ be  an abelian  normal subgroup of $G$. The subspace $L^1_c=L^1_{c,\h}$ is defined to be the set of all $F$'s in $L^1(G)$, for which the partial Fourier transform
 \begin{eqnarray*}
 \wh F^\h(s,q):=\int_H F(E(s)h)\ch_q(h)dh,s\in G, q\in \h^*
 \end{eqnarray*}
is a $C^\iy$-function with compact support on $G/H\ti \h^*$.
The vector space $L^1_c$ is of course dense in $L^1(G)$ and hence is also dense in $C^*(G)$.

Furthermore, for every $F\in L^1_c$ there exists a function $\va\in C_c(G/H)$ such that
\begin{eqnarray*}
 \val{\wh F^\h(s,q)}\leq \no{q}\val{\va(s)}\, \text{ for }\, s\in G, q\in\h^*.
 \end{eqnarray*}

\item
 Let $ F:\R^d\ti\R^d\to \C $ be a smooth function with compact support for which there exists some continuous function $ \va:\R^d\to \R^+ $ with compact support, such that
\begin{eqnarray*}
  \val {F(x,y)}\leq \va(x-y) \, \text{ for }\, x,y\in\R^d .
\end{eqnarray*}
Then by the Young's inequality, the operator norm  $ \noop{\cdot} $ of the kernel operator $ T_F $ defined on $ L^2(\R^d) $ by
\begin{eqnarray*}
T_F(\xi)(x):=\int_{\R^d}F(x,y)\xi(y)dy \text{ for } \xi\in L^2(\R^d), x\in\R^d,
\end{eqnarray*}
is bounded by the $ L^1 $-norm $ \no \va_1 $ of $ \va $.

\item
Let $(X,\mu), (Y,\nu)$ be two measure spaces. Let $Y\mapsto L^2(X,\mu): y\mapsto \xi(y)$ be  an integrable mapping.
Then we have that
\begin{eqnarray}\label{estfh2}
 \nn\no{\int_Y \xi(y)d\nu(y)\vert}_2&\leq& \int_Y \no{\xi(y)}_2d\nu(y),
\end{eqnarray}
this is,
\begin{eqnarray}
 \nn\Big(\int_X\vert{\int_Y\xi(y)(x)d\nu(y)}\vert^2d\mu(x)\Big)^{1/2}&\leq&\int_Y\Big(\int_X\vert \xi(y)(x)\vert^2d\mu(x)\Big)^{1/2}d\nu(y).
\end{eqnarray}

\end{enumerate}
\end{remark}

\begin{proposition}\label{sumsofop}
Let $(X,\mu)$ be a measure space, and $(\si_i)_{i\in I}$ be a family of bounded linear operators on the Hilbert space $\H=L^2(X,\mu)$ such that $\noop{\si_i}\leq C$ for all $i\in I$ and some $C>0$. Suppose furthermore that there exists families $(T_{i,j})_{i\in I}(j=1,\ldots, N)$ and $(S_i)_{i\in I}$ of measurable subsets of $X$ such that $T_{i,j}\cap  T_{i' , j}=\es, j=1,\ldots, N,$ and  $S_{i}\cap S_{i'}=\es$ whenever $i\ne i'$. Then the linear operator
\begin{eqnarray*}
 \si=\sum_{j=1}^N\sum_{i\in I}M_{T_{i,j}}\circ \si_i\circ M_{S_i}
 \end{eqnarray*}
is bounded by $NC$.
\end{proposition}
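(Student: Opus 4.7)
The plan is to split $\si$ into $N$ pieces according to the index $j$ and bound each piece separately by $C$, then use the triangle inequality to obtain $NC$.

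For each $j\in\{1,\ldots,N\}$ set
$$
\si^{(j)}:=\sum_{i\in I}M_{T_{i,j}}\circ\si_i\circ M_{S_i},
$$
so that $\si=\sum_{j=1}^N\si^{(j)}$. The key observation is that, for fixed $j$, the sets $T_{i,j}$ are pairwise disjoint in $i$, and independently of $j$ the sets $S_i$ are pairwise disjoint in $i$. Thus for any $\xi\in L^2(X,\mu)$ the vectors $\{M_{S_i}\xi\}_{i\in I}$ are mutually orthogonal with $\sum_{i}\no{M_{S_i}\xi}_2^2=\no{M_{\bigcup_i S_i}\xi}_2^2\leq \no\xi_2^2$, and similarly for $\{M_{T_{i,j}}\eta_i\}_{i\in I}$ for any choice of $\eta_i\in L^2(X,\mu)$.

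Using these orthogonalities in turn, I would estimate
$$
\no{\si^{(j)}\xi}_2^2=\sum_{i\in I}\no{M_{T_{i,j}}\si_i(M_{S_i}\xi)}_2^2\leq \sum_{i\in I}\no{\si_i(M_{S_i}\xi)}_2^2\leq C^2\sum_{i\in I}\no{M_{S_i}\xi}_2^2\leq C^2\no\xi_2^2,
$$
where the first equality is the disjointness of the $T_{i,j}$ for fixed $j$, the first inequality drops the projection $M_{T_{i,j}}$, the second uses the uniform bound $\noop{\si_i}\leq C$, and the last uses the disjointness of the $S_i$. This gives $\noop{\si^{(j)}}\leq C$ for each $j$, and hence $\noop{\si}\leq\sum_{j=1}^N \noop{\si^{(j)}}\leq NC$.

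There is essentially no obstacle here; the only mild subtlety is making sure the (possibly infinite) sum defining $\si^{(j)}\xi$ converges in $L^2$, which follows from the orthogonality and the square-summability estimate above, so the operator $\si^{(j)}$ is well defined on all of $L^2(X,\mu)$ before one takes the norm. The proof is really a ``Pythagoras plus triangle inequality'' argument.
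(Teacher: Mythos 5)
Your proof is correct and follows essentially the same route as the paper: decompose $\si=\sum_{j}\si^{(j)}$ with $\si^{(j)}=\sum_{i}M_{T_{i,j}}\circ\si_i\circ M_{S_i}$, use the pairwise disjointness of the $T_{i,j}$ (in $i$) and of the $S_i$ to get $\noop{\si^{(j)}}\leq C$ by the same Pythagorean estimate, and finish with the triangle inequality over $j$. Your remark on the $L^2$-convergence of the sum defining $\si^{(j)}\xi$ is a small but welcome addition that the paper leaves implicit.
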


\begin{proof} Let us write
\begin{eqnarray*}
 \si^j:=\sum_{i\in I}M_{T_{i,j}}\circ \si_i\circ M_{S_i} \quad \text{for} \quad j=1,\cdots, N.
 \end{eqnarray*}
Then $\si=\sum_{j=1}^N\si^j$ and
for $\xi\in L^2(X,\mu),j\in \{1,\ldots, N\}$ we have
\begin{eqnarray*}
 \no{\si^j(\xi)}_2^2&=&\int_X\val{\sum_{i\in I}M_{T_{i,j}}\circ \si_i\circ M_{S_i}(\xi)(x)}^2d\mu (x)\\
 &=&\sum_{i\in I}\int_{T_{i,j}}\val{\si_i(M_{S_i}(\xi))(x)}^2d\mu (x)\\
  &\leq &\sum_{i\in I}\int_X\val{\si_i(M_{S_i}(\xi))(x)}^2d\mu (x)\\
 &\leq&\sum_{i\in I}C^2\int_X\val{ M_{S_i}(\xi)(x)}^2d\mu (x)\\
 &=&C^2(\sum_{i\in I}\int_{S_i}\val{\xi(x)}^2d\mu (x))\\
 &\leq&C^2\int_X\val{\xi(x)}^2d\mu (x)\\
 &=&C^2\no\xi_2^2.
 \end{eqnarray*}
Hence $\noop\si\leq N C$.
\end{proof}

\subsection{The open orbits}\label{open}

\subsubsection{The orbit $ \Omega_{\ve,-\ve}= \Om_{\ve S^*-\ve Q^*} $ for $\ve= \pm 1$}\label{vemve}

Let $ \ell_{\ve,-\ve}:=\ve(S^*-Q^*),\ve\in \{1,-1\} $. We have seen in (\ref{6d1}) and (\ref{6d2}) that  the $ G $-orbit of $ (\ell_{\ve,-\ve}){\res\h} $ is the subset
\begin{eqnarray*}
\Om_{\ve,-\ve}{\res\h}&=&\left\{\ve\left ((-e^{-b}+\frac{e^{-b}p^2}{2})Q^*+(-e^{-\frac{a+b}{2}}p)R^*+e^{-a}S^*\right),  a,b,p\in\R\right\}.
\end{eqnarray*}
By Subsection \ref{openor}, the  boundary $ \partial(\ve,-\ve)  $ of the orbit of $
(\ell_{\ve,-\ve})\res\h $ is given by the union of the 5 orbits
\begin{eqnarray*}
G\cdot\ve S^*\res\h&=&\{E(a,b,p)\cdot \ve S^*=
(e^{-b}\frac{\ve}{2}p^2)Q^*+  (-\ve pe^{-\frac{a+b}{2}})R^*+\ve e^{-a}S^*) ;\ a,b,p\in\R\},\\
G\cdot \ve Q^*\res\h &=& \{E(a,b,p)\cdot \ve Q^*= \ve e^{-b}Q^*;\ a,b,p\in\R\}, \\
G\cdot(-\ve Q^*)\res\h &=& \{-E(a,b,p)\cdot \ve Q^*=- \ve e^{-b}Q^*;\ a,b,p\in\R\},\\
G\cdot\sqrt 2 \ve R^*\res\h&=&\{E(a,b,p)\cdot (\sqrt 2 \ve R^*)=
\ve ((-e^{-b}\sqrt 2 p)Q^*+  (e^{\frac{-a-b}{2}}\sqrt 2)R^*);\ a,b,p\in\R \},\\
G\cdot(-\sqrt 2\ve R^*)\res\h&=&\{-E(a,b,p)\cdot (\sqrt 2 \ve R^*)=
\ve ((e^{-b}\sqrt 2 p)Q^*  -(e^{\frac{-a-b}{2}}\sqrt 2)R^*);\ a,b,p\in\R \}
\end{eqnarray*}
and $\{0\}$. Here we denote $E(a,b,c)=\exp(aA)\exp(bB)\exp(pP)$ for
$(a,b,p)\in\R$.

We must now consider the following subsets of $ \R^3 $, which give us the
points in the orbit $ {\OM_{\ve,-\ve}}\res\h $   ``close'' up to a distance $ \de $ to
the corresponding boundary orbits.
\begin{definition}\label{}

\begin{enumerate}\label{}
\item  For $ k \in\Z $, denote by
$$ J_{\de,k}:=[\log(\frac{1}{\de^{6}}),+\iy[ \ti ]-\iy, \log(\frac{1}{\de})] \ti I_{\de^2,k}$$ where $I_{c,l}:=[c l,cl+c [\subset \R$ for $l\in\Z, c>0$ and let
\begin{eqnarray*}
 L_{\de,k}&:=&\R\ti\R\ti  I_{\de^2,k}.
\end{eqnarray*}

\item Let
\[K_\de=[-\frac{1}{\de},\frac{1}{\de}] \ti [-\frac{1}{\de},\frac{1}{\de}]
\ti[-\frac{1}{\de^{1/2}},\frac{1}{\de^{1/2} }].  \]

\item
Let $ S_{\de,1}:=\{(a,b,p); e^{-a}>\de^6\} (\textrm{corresponds to } \R_{\ve S^*})$.
\item
Let $ S_{\de,2}:=\{(a,b,p); e^{-a}\leq\de^6,e^{-b}<\de\}(\textrm{corresponds to } \R_{\ve S^*})$.

\item Let
\begin{eqnarray*}
 S_{\de,3,k,+}:= J_{\de,k}\cap \{(a,b,p);
\ve e^{-b/2}(1- \frac{p^2}{2})>\de^{1/2}, \de\leq e^{-b}\leq
\frac{1}{\de^{{5/4}}}\} \text{ for } k\in\Z,
\end{eqnarray*}
 $  (\textrm{corresponds to } \R_{\ve Q^*})$ and $S_{\de,3,+}:= \bigcup_k S_{\de,3,k,+}$.

\item Let
\begin{eqnarray*}
 S_{\de,3,k,-}:=J_{\de,k}\cap\{(a,b,p);
\ve e^{-b/2}(1-\frac{ p^2}{2})<-\de^{1/2}, \de\leq e^{-b}\leq
\frac{1}{\de^{{5/4}}}\} \text{ for } k\in\Z,
\end{eqnarray*}
$(\textrm{corresponds to } \R_{-\ve Q^*})$ and $S_{\de,3,-}:= \bigcup_k S_{\de,3,k,-}$.

\item Let
\begin{eqnarray*}
S_{\de,4,\pm}:= \{(a,b,p)\in\R^3;  e^{-a}<\de^6, e^{-b}\geq\de,
e^{-b/2}\vert \frac{ p^2}{2}-1\vert\leq\de^{1/2},  \pm p\leq 0 \}.
\end{eqnarray*} $(\textrm{corresponds to } \R_{\pm R^*}) $.

\item

Define the corresponding multiplication operators on $L^2(\R^3) $ by $:
M_{\de,i}=M_{S_{\de,i}}$ for $i=1,2 $, respectively; for
$k\in \Z $, define $ M_{\de,3,k,\pm}=M_{S_{\de,3,k,\pm}}$,$ M_{\de, 3,\pm}=M_{S_{\de,3,\pm}}$ and  $
M_{\de,4,\pm}=M_{S_{\de,4,\pm}} $.

\end{enumerate}

\end{definition}

\begin{remark}\label{iplusi}
\begin{enumerate}\label{propofS}$  $
\item For any $ \de>0 $, the sets $ \{S_{\de,1}, S_{\de,2},S_{\de,3,k,\pm},S_{\de,4,\pm}; k\in
\Z\}$ are pairwise disjoint measurable subsets of  $ \R^3 $.
\item  The region
\begin{eqnarray*}
  \{(a,b,p);
e^{-b/2}\vert \frac{ p^2}{2}-1\vert> {\de^{1/2}}, e^{-b}>
\frac{1}{\de^{5/4}}\}
\end{eqnarray*}
does not need to be considered, since for  a function $ F\in L^1_c $ the kernel
function $ \wh F^{\h}(s-a,t-b,
e^{\frac{a-b}{2}}(v-p),\ve(-e^{-b}+\frac{e^{-b}p^{2}}{2})Q^*-(e^{-\frac{a+b}{2}}
p)R^*+e^{-a}S^*)  $ of the operator $ \pi_{\ve,-\ve}(F) $ is 0 for $ \de $
small enough on this region.
\item
Recall that for $ \al=(a,b,p)$ and $\be=(s,t,v)\in\R^3 $, the multiplication of $ E(s,t,v)\cdot E(a,b,p) $ is given by
\[
E(s,t,v)\cdot E(a,b,p)=E(s+a,t+b, e^{\frac{-a+b}{2}}v+p).
\]
For $\be\in K_{\de}$ and $\al\in S_{\de,k,3,\pm}$, we have that $ e^{{-a}}\leq \de^6, e^b< \frac{1}{\de}, \val v<\frac{1}{\de^{1/2}} $ and so $ e^{{\frac{-a+b}{2}}}\val v\leq \de^2$ and
\begin{eqnarray}\label{suminter}
\be\cdot \al
&=&
(s+a,t+b, e^{\frac{-a+b}{2}}v+p)\\
\nn &\in &
\R\ti\R\ti( [-\de^2,\de^2[+I_{\de^2,k})\\
\nn&\subset& \bigcup_{i=-1}^1L_{\de,k+i}=: T_{\de,3,k}.
\end{eqnarray}

We see that the set $T_{\de,3,k}  $ is  the  union of the $ 3 $
disjoint boxes $ L_{\de, k+i}$ for $i\in\{-1,0, 1\} $.

Let $ N_{\de,3,k} $ be the multiplication operator with the characteristic function of the set $ T_{\de,3,k}$ for all $k\in\Z $.

\item
The irreducible representation $  \pi_{\ve,-\ve}=\ind_ H ^G \ch_{\ve(S^*-Q^*)} $ acts on a $ \xi\in L^2(G/H,\ch_{\ve(S^*-Q^*)}) $ in the following way (see \eqref{kerS}):
\begin{eqnarray*}\label{transpiveve}
\nn &&\pi_{\ve,-\ve}(F)\xi(E(s,t,v))\\
\nn&=&
\int_{\R^3}e^{\frac{1}{2}(a-b)}\wh F^{\h}(s-a,t-b, e^{\frac{a-b}{2}}(v-p),\ve( -e^{-b}(1-\frac{p^{2}}{2})Q^*+(-e^{-\frac{a+b}{2}}p)R^*+e^{-a}S^*))\\
&&\xi(E(a,b,p))dadbdp,\ F\in L^1(G).
\end{eqnarray*}
\item  Let $ F\in L^1_c $. There exists a box $ K=[-M,M]^3 $ for some $ M>0 $ such that
\begin{eqnarray*}
\widehat F^\h(s,q)=0 \text{ for } s\not\in K \text{ and } q\in\h^*.
\end{eqnarray*}
Hence it follows from the relations (\ref{suminter}) that for $ \de>0 $ small enough,
\begin{eqnarray*}
\widehat F^\h(st\inv,t\cdot (\ve(S^*-Q^*)))=0, \text{ for } t\in S_{\de,3,k,\pm}, s\not \in  T_{\de,3, k} \text{ and } k\in\Z.
\end{eqnarray*}
Therefore,
\begin{eqnarray}\label{multrel}
N_{\de,3,k}\circ \pi_{\ve,-\ve}(F)\circ M_{\de,3,k,\pm}=\pi_{\ve,-\ve}(F)\circ M_{\de,3,k,\pm} \, \text{ for }\, k\in \Z.
\end{eqnarray}
\end{enumerate}
\end{remark}

\begin{definition}\label{defsiplmi}
Let $\sigma_l:=\ind_H^G \chi_l$ for
$l= \ve S^*$, $\ve(\pm\sqrt 2 R^*-2 Q^*)$ and $\ve (-1+\frac{\de^4 k^2}2)Q^*$, respectively.
We define the linear operator $ \sigma_{\ve,-\ve,\de}(F) $ by the rule
\rm   \begin{eqnarray*}
 &&\si_{\ve,-\ve,\de}(F) :=
 \sum_{i=1}^{2}\si_{\ve S^*}(F)\circ M_{\tilde \va_i}\circ M_{\de,i}+\si_{\de,3}(F)+\si_{\de,4,+}(F)+\si_{\de,4,-}(F),
\end{eqnarray*}
where $\si_{\de,4,\pm}(F)=\si_{\ve(\pm\sqrt{{2}}R^*-2 Q^*)}(F)\circ M_{\de,4,\pm}$ and
\begin{eqnarray*}
&&\si_{\de,3}(F):=\sum_{k\in\Z}\Big(N_{\de,3,k}\circ \si_{\ve(-1+\frac{\de^4 k^2}2)Q^*}(F)
\circ (M_{\de,3,k,+}+M_{\de,3,k,-})\Big),
\end{eqnarray*}
where the operators $ M_{\tilde \va_i}, i=1,2$, are multiplication operators
with the functions $ \tilde \va_1(a,b,p):=\va(-e^{-b}), \tilde \va_2=1 $ and $ \va:\R\to \R $ is a $
C^{\iy} $-function with support contained in $ [-1,1]  $ such that $ \va([-\frac{1}{2}, \frac{1}{2}] )=\{1\}
$ and $ \no\va_\iy=1 $.
\end{definition}

The kernel functions of the different operators appearing in the sum above are given by
\begin{enumerate}
\item $\si_{\ve S^*}(F)\circ M_{\tilde \va_i}$ gives
\begin{eqnarray}\label{defveve}
\nn F_{\de,i}((s,t,v),(a,b,p))
 =& \wh F^{\h}(s-a,t-b, e^{\frac{a-b}{2}}(v-p),\ve( (\frac{e^{-b}p^{2}}{2})Q^*+(-e^{-\frac{a+b}{2}}p)R^*+e^{-a}S^*))\\
\nn& e^{\frac{1}{2}(a-b)}\va_i(a,b,p) \, \text{ for } i= 1, 2;
\end{eqnarray}
\item $\si_{\ve(\pm \sqrt{2}R^*-2 Q^*)}(F)\circ M_{\de,4,\pm}$ gives
\begin{eqnarray}
&\nn F_{\de,4,\pm}((s,t,v),(a,b,p))
=& \wh F^{\h}(s-a,t-b, e^{\frac{a-b}{2}}(v-p), \ve( -e^{-b}(\pm\sqrt 2 p +2))Q^*\pm \ve(e^{-\frac{a+b}{2}}\sqrt2) R^*)\\
\nn&& e^{\frac{1}{2}(a-b)}1_{S_{\de,4,\pm}}(a,b,p); \text{ and }
\end{eqnarray}
\item $\si_{\ve(-1+\frac{\de^4 k^2}{2})Q^*}(F)\circ M_{\de,3,k,\pm}$ gives
\begin{eqnarray}
& \nn F_{\de,3,k,\pm}((s,t,v),(a,b,p))=& \wh F^{\h}(s-a,t-b,e^{\frac{a-b}{2}}(v-p),\ve (e^{-b}(-1+\frac{\de^4 k^2}{2})Q^*)\\
\nn&& e^{\frac{1}{2}(a-b)}1_{S_{\de,3,k,\pm}}(a,b,p) \text{ for } k \in \Z.
\end{eqnarray}
\end{enumerate}

Now the representation $ \si_{\ve\vert 1-\frac{k^2\de^4}{2}\vert Q^*}$ is
equivalent to the representation $ \si_{\ve Q^*} $, since both linear
functionals are in the same $ G $-orbit. Therefore,
for any $ k\in \Z$ and $1>\de>0 $:
\begin{eqnarray}\label{sikde}
\noop{N_{\de,3,k}\circ \si_{\ve|1-\frac{\de^4 k^2}{2}|Q^*}(F) \circ M_{\de,3,k,\pm})}&\leq&\noop{\si_{ \ve Q^*}}(F).
\end{eqnarray}

\begin{proposition}\label{se3bou}
For any $ F\in L^1_c $ and $\de>0$, the operator $ \si_{\de,3}(F) $ is bounded in norm by $
C \noop{\si_{Q^*}(F)} $, where  $ C $ is the number given by $ \max_{l\in \Z}\#\{k\in \Z, T_{\de,3,k}\cap
T_{\de,3,l}\ne\es\}$ which is less and equal $9$.
\end{proposition}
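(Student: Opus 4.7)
The plan is to realize Proposition~\ref{se3bou} as an immediate consequence of Proposition~\ref{sumsofop} applied to a refactored form of the sum defining $\si_{\de,3}(F)$. Two inputs are required: a uniform operator-norm bound on each diagonal piece $\si_{\ve(-1+\de^4 k^2/2)Q^*}(F)$, and a decomposition of the target projection $N_{\de,3,k}$ into mutually orthogonal slab projections.

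The uniform bound is exactly \eqref{sikde}: the functional $\ve(-1+\de^4 k^2/2)Q^*$ is a (generically nonzero) real multiple of $Q^*$, hence lies in the coadjoint $G$-orbit of $\pm Q^*$, so the induced representation $\si_{\ve(-1+\de^4 k^2/2)Q^*}$ is unitarily equivalent to $\si_{\pm Q^*}$, which has the same operator norm as $\si_{Q^*}$. For the decomposition, since $T_{\de,3,k}=L_{\de,k-1}\cup L_{\de,k}\cup L_{\de,k+1}$ is a disjoint union by construction, we have $N_{\de,3,k}=M_{L_{\de,k-1}}+M_{L_{\de,k}}+M_{L_{\de,k+1}}$. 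Substituting this expression into the defining formula for $\si_{\de,3}(F)$ and interchanging the finite outer sum with the sum over $k$ yields
\[
\si_{\de,3}(F)=\sum_{m=-1}^{1}\sum_{k\in\Z} M_{L_{\de,k+m}}\circ \si_{\ve(-1+\de^4 k^2/2)Q^*}(F)\circ M_{S_{\de,3,k,+}\cup S_{\de,3,k,-}}.
\]

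This is exactly the form required by Proposition~\ref{sumsofop} with $I=\Z$ and $N=3$. The disjointness hypotheses hold: for each fixed $m$, the targets $L_{\de,k+m}$ are pairwise disjoint in $k$ because the third coordinate of $L_{\de,j}$ sits in $I_{\de^2,j}$ and the intervals $I_{\de^2,\cdot}$ are pairwise disjoint; the sources $S_{\de,3,k,+}\cup S_{\de,3,k,-}$ are pairwise disjoint in $k$ for the same reason, since by construction both $S_{\de,3,k,\pm}$ are contained in $J_{\de,k}\subset\R\times\R\times I_{\de^2,k}$. Proposition~\ref{sumsofop} then delivers $\noop{\si_{\de,3}(F)}\leq 3\,\noop{\si_{Q^*}(F)}$, which is in particular bounded by $C\,\noop{\si_{Q^*}(F)}$ for the overlap multiplicity $C=\max_{l}\#\{k: T_{\de,3,k}\cap T_{\de,3,l}\neq\es\}\leq 9$ (in fact $C\leq 5$, since $T_{\de,3,k}$ meets $T_{\de,3,l}$ only when $|k-l|\leq 2$).

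The proof is essentially bookkeeping, so I do not foresee a substantive obstacle; the only items requiring a little attention are verifying the source-set disjointness (immediate from the $p\in I_{\de^2,k}$ constraint built into $J_{\de,k}$) and confirming that the at most two exceptional values of $k$ for which $\de^4 k^2/2=1$ do not spoil the uniform norm bound (they can be absorbed, since each contributes only a single operator of finite norm).
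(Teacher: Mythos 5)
Your proof is correct and is essentially the paper's own argument: the paper's proof consists of the single instruction to apply Proposition \ref{sumsofop}, and your splitting of $N_{\de,3,k}$ into the three mutually disjoint slab projections $M_{L_{\de,k+m}}$, $m=-1,0,1$, together with the uniform norm bound coming from the equivalence of $\si_{\ve(-1+\de^4k^2/2)Q^*}$ with an induced representation attached to a point of the orbit of $\pm\ve Q^*$, is exactly the intended application (and even yields the sharper constant $3\leq C$). The only caveat is that for $\de^4k^2<2$ the functional is a positive multiple of $-\ve Q^*$ rather than of $\ve Q^*$, so "same operator norm as $\si_{Q^*}$" is not literally justified and the clean bound is $\max\big(\noop{\si_{Q^*}(F)},\noop{\si_{-Q^*}(F)}\big)$; this imprecision is shared by the paper's own statement and does not distinguish your argument from theirs.
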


\begin{proof}
Apply Proposition \ref{sumsofop}.

\end{proof}

\begin{proposition}\label{ve-sign}
For any $ F\in L^1_c$ and $\de>0$, we have that
\begin{eqnarray*}
\noop{\sigma_{\ve,-\ve,\de} (F)}\leq 2\noop{\si_{\ve S^*}(F)}+C\noop{\si_{\ve Q^*}(F)}+ 2\noop{\si_{\ve R^*}(F)}
\end{eqnarray*}
and  we can extend the mapping $ F\mapsto \si_{\ve,-\ve,\de}(F)$ to every $ F\in C^*(G)$, then
\[ \lim_{\de\to 0}\dis(\pi_{\ve(S^*-Q^*)}(F)-\sigma_{\ve,-\ve,\de} (F),\K(L^{2}(\R^3)))=0 . \]
Here $\dis(a, \K(\H))$ means the distance of an operator $a \in B(\H)$ to the algebra $\K(\H)$ of compact operators on the Hilbert space $\H$.
\end{proposition}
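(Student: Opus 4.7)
I would prove Proposition \ref{ve-sign} in two steps, addressing the uniform norm bound and the compact approximation separately. For the norm bound, decompose $\si_{\ve,-\ve,\de}(F)$ into its five blocks. The two blocks $\si_{\ve S^*}(F)\circ M_{\tilde\va_i}\circ M_{\de,i}$ are each bounded by $\noop{\si_{\ve S^*}(F)}$ since $|\tilde\va_i|\leq 1$ and multiplication operators are contractive; the disjointness of $S_{\de,1}$ and $S_{\de,2}$ then gives at most $2\noop{\si_{\ve S^*}(F)}$ in total. The block $\si_{\de,3}(F)$ is handled directly by Proposition \ref{se3bou}. The two blocks $\si_{\de,4,\pm}(F)$ are bounded by $\noop{\si_{\ve(\pm\sqrt 2 R^*-2Q^*)}(F)}=\noop{\si_{\pm\ve R^*}(F)}$, since $\ve(\pm\sqrt 2 R^*-2Q^*)$ lies in the $G$-orbit of $\pm\sqrt 2 \ve R^*$ in $\h^*$ and hence induces a unitarily equivalent representation.

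For the compact approximation, I would decompose the identity on $L^2(\R^3)$ as
\begin{eqnarray*}
 I &=& M_{\de,1}+M_{\de,2}+M_{\de,3,+}+M_{\de,3,-}+M_{\de,4,+}+M_{\de,4,-}+M_{R_\de},
\end{eqnarray*}
where $R_\de=\{(a,b,p):e^{-a}\leq\de^6,\,e^{-b}>\de^{-5/4},\,e^{-b/2}|p^2/2-1|>\de^{1/2}\}$. By Remark \ref{iplusi}(2), $\pi_{\ve(S^*-Q^*)}(F)\circ M_{R_\de}=0$ for $\de$ small enough (relative to the compact support of $\wh F^{\h}$ in $\h^*$), so the residual piece contributes nothing to the distance. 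On each $S_{\de,i}$ the corresponding block of $\si_{\ve,-\ve,\de}$ was designed so that the kernel difference involves only the $\h^*$-argument of $\wh F^{\h}$: on $S_{\de,2}$ the $Q^*$-error is $-\ve e^{-b}=O(\de)$; on $S_{\de,4,\pm}$ the condition $e^{-b/2}|p^2/2-1|\leq\de^{1/2}$ with $\pm p\leq 0$ forces $p$ close to $\mp\sqrt 2$, making the $Q^*$-coefficient match the target representation up to $O(\de^{1/2})$; on each box $S_{\de,3,k,\pm}$, using $|p-\de^2 k|\leq \de^2$ together with the upper bound on $e^{-b}$, the error is $O(\de^{\kappa})$ uniformly in $k$ for some $\kappa>0$; and on $S_{\de,1}$ both operators have effectively compact parameter support, so both are Hilbert--Schmidt and their difference is automatically compact. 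Pointwise kernel bounds are turned into operator-norm bounds by combining uniform continuity of $\wh F^{\h}$ with the Young-type estimate of Remark \ref{remark_pre}(3), after a routine change of variables to translation-invariant coordinates.

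The principal obstacle is the region $S_{\de,3}$: although each box yields a small individual error, the number of admissible boxes grows like $\de^{-2}$, so a box-by-box summation would diverge. I would instead invoke Proposition \ref{sumsofop} with the pairwise disjoint domain sets $(S_{\de,3,k,\pm})_k$ together with the at-most-threefold-covering image sets $(T_{\de,3,k})_k$ provided by \eqref{multrel}, which delivers a $k$-uniform operator-norm bound on the assembled error rather than a divergent sum. Collecting the estimates on all regions and passing to the limit $\de\to 0$ yields the distance-to-$\K$ claim on the dense subspace $L^1_c$; the uniform bound of the first step then extends the result to all $F\in C^*(G)$.
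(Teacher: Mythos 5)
Your proposal is correct and follows essentially the same route as the paper's proof: the same block-wise norm bound (triangle inequality on the two $\si_{\ve S^*}$-blocks, Proposition \ref{se3bou} for $\si_{\de,3}$, and orbit-equivalence $\si_{\ve(\pm\sqrt2 R^*-2Q^*)}\simeq\si_{\pm\ve R^*}$ for the remaining blocks), the same partition of $\R^3$ with the residual region discarded via Remark \ref{iplusi}, Hilbert--Schmidt compactness on $S_{\de,1}$, per-region $O(\de^{1/2})$ kernel estimates from Remark \ref{remark_pre}, the finite-overlap assembly of the $S_{\de,3,k,\pm}$ errors using \eqref{multrel} and the Proposition \ref{sumsofop} mechanism, and density of $L^1_c$ to pass to $C^*(G)$. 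One small point: on $S_{\de,4,\pm}$ the constraint does not actually force $p$ to be near $\mp\sqrt2$ (it need not be when $e^{-b}$ is of order $\de$); the paper instead uses the sign condition $\pm p\le 0$ to bound $(p\pm\sqrt2)^2$ by $(\tfrac{p^2}{2}-1)^2$, which gives the small error you claim, so the conclusion stands.
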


\begin{proof} We have that:
 \begin{eqnarray*}
\noop{\sum_{i=1}^{2}\si_{\ve S^*}(F)\circ M_{\tilde \va_i}\circ M_{\de,i}}
\leq 2 \noop{\si_{\ep S^*}(F)}.
\end{eqnarray*}
Similarly, we have
$\noop{\sum_{j= \pm 1}\si_{j\sqrt{2}R^*-2Q^*}(F)\circ M_{\de,4,\pm}}\leq 2 \noop{\si_{\ep R^*}(F)}$, since $ \pm\sqrt 2 R^*-2Q^* $ is contained in the $ G $-orbit of $\pm  R^* $. Together with Proposition \ref{se3bou}, the inequality holds.

The kernel function $ D_{\ve,-\ve,\de,1} $ of the operator $ \pi_{\ve(S^*-Q^*)}(F)-\si_{\ve S^*}(F)\circ M_{\tilde \va_1}\circ M_{\de,1}$ is given by
\begin{eqnarray*}
&&D_{\ve,-\ve,\de,1}((s,t,v)(a,b,p))\\
\nn &=&e^{\frac{1}{2}(a-b)}\left(\wh F^{\h}(s-a,t-b, e^{\frac{a-b}{2}}(v-p),\ve(-e^{-b}+\frac{e^{-b}p^{2}}{2})Q^*-(e^{-\frac{a+b}{2}}p)R^*+e^{-a}S^*)\right.\\
\nn & &-
\wh F^{\h}(s-a,t-b, e^{\frac{a-b}{2}}(v-p),\ve( e^{-b}\frac{p^{2}}{2})Q^*-(e^{-\frac{a+b}{2}}p)R^*+e^{-a}S^*)\va(-e^{-b})1_{S_{\de,1}}(a,b,p)\Big).
\end{eqnarray*}
Hence
\begin{eqnarray*}
&&
\no{(\pi_{\ve,-\ve}(F)-\si_{\ve S^*}(F)\circ M_{\tilde \va_1}\circ M_{\de,1})\circ M_{\de,1}}_{\rm H-S}^2\\
&=&
\int_{\R^5\ti\{a; e^{-a}>\de^6\}}\vert(\wh F^{\h}(s,t,v,\ve(-e^{-b}+\frac{e^{-b}p^{2}}{2})Q^*-(e^{-\frac{a+b}{2}}p)R^*+e^{-a}S^*)\\
&&
-\wh F^{\h}(s,t,v,\ve( e^{-b}\frac{p^{2}}{2})Q^*-(e^{-\frac{a+b}{2}}p)R^*+e^{-a}S^*)\va(-e^{-b})\vert^2
%%e^{\frac{1}{2}(a-b)}
e^{a-b}dsdtdvdadbdp\\
&=&
\int_{\R^5\ti\{a; e^{-a}>\de^6\}}\vert(\wh
F^{\h}(s,t,v,\ve(-e^{-b}+\frac{e^{a}p^{2}}{2})Q^*-pR^*+e^{
-a}S^*)\\
&&
-\wh F^{\h}(s,t,v,\ve(
e^{a}\frac{p^{2}}{2})Q^*-pR^*+e^{-a}S^*)\va(-e^{-b}
)\vert^2
%%e^{\frac{1}{2}(a-b)}
e^{\frac{3a-b}{2}}dsdtdvdadbdp\\
&\leq&
\int_{\R^5\ti\{a; e^{-a}>\de^6\}}2\vert\Big (\wh F^{\h}(s,t,v,\ve(-e^{-b}+\frac{e^{a}p^{2}}{2})Q^*- pR^*+e^{-a}S^*)
\Big)(1-\va(-e^{{-b}})\vert^2
%%%e^a
e^{\frac{3a-b}{2}}  %%%%%%%%%%%%%%%%%%%%%%%%%%%%%%%%%
dsdtdvdadbdp\\
&&+ \int_{\R^5\ti\{a; e^{-a}>\de^6\}}2\vert\Big(\wh
F^{\h}(s,t,v,\ve(-e^{-b}+\frac{e^{a}p^{2}}{2})Q^*-pR^*+e^{
-a}S^*)\\
&&
-\wh F^{\h}(s,t,v,\ve(
e^{a}\frac{p^{2}}{2})Q^*-p R^*+e^{-a}S^*)\Big)\va(-e^{-b}
)\vert^2
%%%e^a
e^{\frac{3a-b}{2}}
dsdtdvdadbdp\\
&\leq&
%%%%%%%%%%%%%%%%
\int_{\R^4\ti\{b; e^{-b}>\frac12\}\ti\{a; e^{-a}>\de^6\}}\vert
\al(s,t,v)\be(-e^{-b}+\frac{e^{a}p^2}{2},  p,e^{-a})\vert^2
e^{\frac{3a-b}{2}}dsdtdvdadbdp
\\
%%%%%%%%%%%%%%%%%
&& +
\int_{\R^4\ti\{b; b>-C\}\ti\{a; e^{-a}>\de^6\}}e^{-2b}\vert
\al(s,t,v)\be(\frac{e^{a}p^2}{2},  p,e^{-a})\vert^2
%%%e^{a}
e^{\frac{3a-b}{2}}
dsdtdvdadbdp\\
&<&\iy
\end{eqnarray*}
for two continuous functions $ \al,\be $ on $ \R^3 $ with compact support and some $ C>0 $.
This shows that the operator $ (\pi_{\ve(S^*-Q^*)}-\si_{\ve S^*}(F)\circ M_{\tilde \va_
1}\circ M_{\de,1})\circ M_{\de,1} $ is Hilbert-Schmidt, therefore, it is compact.

Let us estimate the operator norm of the linear endomorphism
\begin{eqnarray*}
D_{\ve,\de,3,l,+}:=(\pi_{\ve,-\ve}(F)-N_{\de,3,l}\circ\si_{\ve(-1+\frac{\de^{4}
l^2}{2} )Q^*})\circ M_{\de,3,l,+}.
\end{eqnarray*}
We have seen in relation (\ref{multrel}) that
\begin{eqnarray*}
\pi_{\ve,-\ve}(F)\circ M_{\de,3,l,+}= N_{\de,3,l}\circ \pi_{\ve,-\ve}(F)\circ M_{\de,3,l,+}.
\end{eqnarray*}
The kernel function $ F_{\ve,\de, 3,l,+} $ of $ D_{\ve,\de, 3,l,+} $ is given by
\begin{eqnarray*}
&&
F_{\ve,\de,3,l,+}(s,t,v,a,b,p)\\
&=&
\left(\wh
F^{\h}(s-a,t-b,e^{\frac{a-b}{2}}(v-p),\ve(e^{-b}(-1+\frac{p^{2}}{2}
)Q^*-(e^ { -\frac{a+b}{2}}p)R^*+e^{-a}S^*)\right.\\
&&
-\left.\wh F^{\h}(s-a,t-b,e^{\frac{a-b}{2}}(v-p),  \ve
(e^{-b}(-1+\frac{\de^4
l^2}{2})Q^*)\right)\\
&&e^{\frac{1}{2}(a-b)}1_{S_{\de,3,l,+}}(a,b,p)1_{T_{\de,3,l}}(s,t,v).
\end{eqnarray*}

On the set $ S_{\de,3,l,+} $, we have that $ \frac{1}{\de^{{3/4}}} \geq e^{-b}\geq \de $, $
e^{-b/2}\val{1-\frac{p^2}{2}}>\de^{1/2} $ and $ \val {p-l \de^2}<\de^2 $. Since $ F\in L^1_c
$, there exists $ \rh>0 $ such that $ \wh F^\h(s,t,v, q Q^*+r R^*+z S^*)=0 $
whenever $ \val q>\rh\textrm{ or }\val r>\frac{1}{2}\rh $.

If now $ e^{-b/2}>\frac{1}{\de^{(3/4)}}\geq \frac{\rh}{\de^{1/2}}$ (for $\de$ small enough), then
 \begin{eqnarray*}
 \wh
F^{\h}(s-a,t-b,e^{\frac{a-b}{2}}(v-p),\ve(e^{-b}(-1+\frac{p^{2}}{2}
)Q^*-(e^ { -\frac{a+b}{2}}p)R^*+e^{-a}S^*)=0.
 \end{eqnarray*}
If $ p $ is large, then $ e^{-b}({\frac{p^2}{2}}-1)>\rh \Rightarrow F_{\ve,\de, 3,l,+}(s,t,v,a,b,p)=0$ tells us that we can assume that $ e^{-b}\val p\leq e^{-b}(\frac{p^2}{2}-1)\leq \rh$ for every $ p $ giving  a nonzero value to the function $ F_{\de,3,k,+} $.
For $ \val p $ small, we have then $ e^{-b}(\val p+\val l\de^2)\leq \frac{ C}{\de^{1/2}} $.

Since $ e^{-a}\leq \de^6 $ in the set $ S_{\de,3,l,+} $, it follows that $ e^{-\frac{a+b}{2}}\val p<\de $ whenever $  F_{\de,3,l,+}(-,e^{-\frac{a+b}{2}} p,-)\ne 0 $.
According to Remark \ref{remark_pre} we can now write
\begin{eqnarray*}
&&
\vert \wh F^{\h}(s-a,t-b,e^{\frac{a-b}{2}}(v-p),\ve(-e^{-b}+\frac{e^{-b}p^{2}}{2})Q^*-(e^{-\frac{a+b}{2}}p)R^*+e^{-a}S^*)\\
&&- \wh F^{\h}(s-a,t-b,e^{\frac{a-b}{2}}(v-p),\ve (-e^{-b}+e^{-b}\frac{\de^4
l^2}{2})Q^*)\vert\\
&\leq&(e^{-b}\vert\frac{\de^4 l^2-p^2}{2}\vert+\vert
e^{{\frac{-a-b}{2}}}p\vert+e^{-a})\ps(s-a,t-b,e^{\frac{a-b}{2}}(v-p))
\end{eqnarray*}
for some non-negative $ C^\iy $-function $ \ps $ with compact support contained in $ [-M,M] ^3 $.
Hence for any $ (a,b,p)\in\R^3 $:
$$
 \val{F_{\ve,\de,3,l,+}(s,t,v,a,b,p)} \leq M \de \ps(s-a,t-b,e^{\frac{a-b}{2}}(v-p))e^{\frac{1}{2}(a-b)},\\
$$
for all $a,b,p,s,t,v\in\R$ and some $M>0$ independent of $l,F$, and $\de$ small enough.
According to the Young's inequality, this tells us that
\begin{eqnarray*}
 \noop{(\pi_{\ve,-\ve}(F)-N_{\de,3,l}\circ\si_{\ve(-1+\frac{\de^{4}
l^2}{2} )Q^*})\circ M_{\de,3,l,+}}\leq M \de \no \ps_1,
 \end{eqnarray*}
for $l\in\Z, F\in L^1_c$ and $\de>0$ small enough.

We have seen in \eqref{suminter} that $ F_{\ve,\de,3,l,+}(s,t,v,a,b,p)=0 $
whenever $ (s,t,v)\not\in T_{\de,3,l} $.
Consequently for any sufficiently small $1>\de> 0$ we see that
\begin{eqnarray*}
\noop{(\pi_{\ve,-\ve}(F)-\si_{\de,3}(F))\circ(M_{\de,3,+}+M_{\de,3,-})}
&\leq& C\de
\end{eqnarray*}
for some constant $ C>0 $.

For $ i=2 $, the kernel function $ D_{\de,2} $ of the operator $(\pi_{\ve, -\ve}(F)-\si_{\ve S^*}(F))\circ M_{\de,2}  $ is given by
\begin{eqnarray*}
&&D_{\de,2}(s,t,v,a,b,p)\\
&=&
e^{\frac{1}{2}(a-b)}\left(\wh F^{\h}(s-a,t-b, e^{\frac{a-b}{2}}(v-p)),\ve(-e^{-b}+\frac{e^{-b}p^{2}}{2})Q^*+(-e^{-\frac{a+b}{2}}p)R^*+e^{-a}S^*)\right.\\
\nn &&
-\left. \wh F^{\h}(s-a,t-b, e^{\frac{a-b}{2}}(v-p)),\ve e^{-b}\frac{p^{2}}{2}Q^*- e^{-\frac{a+b}{2}}pR^*+e^{-a}S^*)\right)1_{S_{\de,2}}(a,b,p).
\end{eqnarray*}
Then we have a similar  manipulation:
\begin{eqnarray*}
&&\vert D_{\de,2}(s,t,v,a,b,p)\vert\\
&\leq&e^{\frac{1}{2}(a-b)}\left\vert\big( \wh F^{\h}(s-a,t-b, e^{\frac{a-b}{2}}(v-p),\ve (-e^{-b}+\frac{e^{-b}p^{2}}{2})Q^*+(-e^{-\frac{a+b}{2}}p)R^*+e^{-a}S^*)\right. \\
&&-\left.
 \wh F^{\h}(s-a,t-b, e^{\frac{a-b}{2}}(v-p),(\ve e^{-b}\frac{p^{2}}{2})Q^*+(-e^{-\frac{a+b}{2}}p)R^*+e^{-a}S^*)\big)\right\vert 1_{S_{\de,2}}(a,b,p)\\
&\leq&e^{-b}e^{\frac{1}{2}(a-b)}\vert \ps(s-a,t-b, e^{\frac{a-b}{2}}(v-p))\vert 1_{S_{\de,2}}(a,b,p)\\
&\leq&\de e^{\frac{1}{2}(a-b)}\ps(s-a,t-b, e^{\frac{a-b}{2}}(v-p))
\end{eqnarray*}
for some $ \ps\in C_c(\R^3) $.
Again we have the estimate
\begin{eqnarray}\label{esde7}
\noop{(\pi_{\ve,-\ve}(F)-\si_{\ve S^*}(F))\circ M_{\de,2} }\leq \de \int_{\R^3}\vert \ps(a,b,p)\vert dadbdp.
\end{eqnarray}
Finally for $i=4$, the kernel function $D_{\de,4,k,\pm}$ of the operator
$(\pi_{\ve,-\ve}(F)-\si_{\ve(\pm\sqrt 2 R^*-2 Q^*)}(F))\circ M_{\de,4,\pm}$
 is given by
\begin{eqnarray*}
&&D_{\de,4,\pm}(s,t,v,a,b,p)\\
&=&
e^{\frac{1}{2}(a-b)}\big(\wh F^{\h}(s-a,t-b, e^{\frac{a-b}{2}}(v-p),\ve(-e^{-b}+\frac{e^{-b}p^{2}}{2})Q^*-\ve(e^{-\frac{a+b}{2}}p)R^*+\ve e^{-a}S^*)\\
\nn &&
 -\wh F^{\h}(s-a,t-b, e^{\frac{a-b}{2}}(v-p),-\ve( e^{-b}({\pm\sqrt
2}p+2))Q^*+\ve(e^{-\frac{a+b}{2}}(\pm \sqrt
2))R^*\big)1_{S_{\de,4,\pm}}(a,b,p).
\end{eqnarray*}
Then we have as in the preceding case on the set $ S_{\ve,\de,4,\pm} $ that
\begin{eqnarray*}
e^{-b/2}\vert \frac{p}{\sqrt2}\pm 1\vert \leq
e^{-b/2}\vert \frac{p^2}{2}-1\vert \leq \de^{1/2},
\end{eqnarray*}
and
\begin{eqnarray*}
e^{-b}(\frac{p\pm\sqrt 2}{2})^2&\leq&\frac{1}{2}  e^{-b}(\frac{p^2}{2}-1)^2.
\end{eqnarray*}

\begin{eqnarray*}
&&\vert D_{\de,4,k,+}(s,t,v,a,b,p)\vert\\
&\leq&e^{\frac{1}{2}(a-b)}\left\vert\wh F^{\h}(s-a,t-b, e^{\frac{a-b}{2}}(v-p),\ve(-e^{-b}+\frac{e^{-b}p^{2}}{2})Q^*+\ve(-e^{-\frac{a+b}{2}}p)R^*+e^{-a}S^*)\right. \\
&&-\left.
 \wh F^{\h}(s-a,t-b, e^{\frac{a-b}{2}}(v-p),-\ve e^{-b}(2{\pm\sqrt
2}p)Q^*+\ve(e^{-\frac{a+b}{2}}{(\pm\sqrt 2)})R^*))\right\vert
1_{S_{\de,4,\pm}}(a,b,p)\\
 &\leq&(\vert e^{-b}(-1+\frac{p^2}{2}\pm\sqrt 2 p+2)\vert+e^{-\frac{a+b}{2}}\vert p{\pm\sqrt 2}\vert +e^{-a}) \\
&&
e^{\frac{1}{2}(a-b)}\ps(s-a,t-b, e^{\frac{a-b}{2}}(v-p))1_{S_{\de,4,\pm}}(a,b,p)\\
&\leq&
(e^{-b}\frac{(p\pm\sqrt 2)^2}{2}+e^{-\frac{a+b}{2}}\vert p{\pm\sqrt 2}\vert +e^{-a}) e^{\frac{1}{2}(a-b)}\ps(s-a,t-b, e^{\frac{a-b}{2}}(v-p))1_{S_{\de,4}}(a,b,p)\\
&\leq&
(e^{-b}(\frac{p^2}{2}-1)^2+e^{-\frac{a+b}{2}}\sqrt 2|\frac{p^2}{2}-1|+e^{-a}) e^{\frac{1}{2}(a-b)}\ps(s-a,t-b, e^{\frac{a-b}{2}}(v-p))1_{S_{\de,4}}(a,b,p)\\
&\leq&
(\de+\sqrt 2\de^{7/2}+\de^6) e^{\frac{1}{2}(a-b)}\ps(s-a,t-b, e^{\frac{a-b}{2}}(v-p))1_{S_{\de,4,\pm}}(a,b,p).
\end{eqnarray*}
Hence as before:
\begin{eqnarray}\label{esde3}
\noop{(\pi_{\ve,-\ve}(F)-\si_{-\ve (\pm\sqrt 2R^*+2 Q^*)}(F))\circ M_{\de,4,\pm }}\leq \de \int_{\R^3}\ps(a,b,p)dadbdp.
\end{eqnarray}

We have seen above that $ (\pi_{\ve,-\ve}(F)-\si_{\ve S^*}(F)\circ M_{\tilde \va_1})\circ M_{\de,1} $ is compact for any $ \de> 0 $.
Hence it follows from the estimates above that
\[\pi_{\ve,-\ve}(F)(1-M_{\de,1})-\left(\si_{\ve S^*}(F) \circ M_{\de,2}+ \sum_{j=\pm }\si_{\ve (j\sqrt{2}R^*-2Q^*)}(F) \circ M_{\de,4,j}+ \sum_{j=\pm}\si_{\de,3,j}(F)\right) \]
converges to $0$ as $ \de $ tends to $0$. Therefore,
\begin{eqnarray*}
 \lim_{\de\to 0}\text{dis}(\pi_{\ve(S^*-Q^*)}(F)-\sigma_{\ve,-\ve,\de} (F),\K(L^{2}(\R^3)))= 0 .
\end{eqnarray*}
Since $ L^1_c $ is dense in $ C^*(G) $, the  relation $ \lim_{\de\to 0}\text{dis}(\pi_{\ve(S^*-Q^*)}(F)-\sigma_{\ve,-\ve,\de} (F),\K(L^{2}(\R^3)))= 0 $ remains true for any $ F\in C^*(G) $.
\end{proof}

\subsubsection{The orbits $ \Om_{\ve,\ve}=\Om_{\ve Q^*+\ve S^*}, \ve=\pm 1$ }\label{vempe}

The boundary orbits of $ {\Om_{\ve,\ve}}\res{\h}\subset \h^* $ are the orbits of $ \ve S^* $  and of $\ve  Q^* $ inside $ \h^* $. Therefore we must decompose $ \R^3\simeq G/H $ into three  disjoint subsets. Since this case is similar but much easier to the preceding case, we shall omit the proofs.

\begin{definition}\label{}

\begin{enumerate}\label{}
\item  For $ k \in\Z $, denote by
$$ J_{\de,k}:=[\log(\frac{1}{\de^{6}}),+\iy[\ti ]-\iy, \log(\frac{1}{\de})]\ti
 I_{\de^2,k}$$ where $I_{c,l}:=[c l,cl+c [\subset \R$ for $l\in\Z, c>0$ and let
\begin{eqnarray*}
 L_{\de,k}:=\R\ti\R\ti  I_{\de^2,k}.
\end{eqnarray*}

\item   Let
\[K_\de=[-\frac{1}{\de},\frac{1}{\de}] \ti [-\frac{1}{\de},\frac{1}{\de}]
\ti[-\frac{1}{\de^{1/2}},\frac{1}{\de^{1/2} }].  \]

\item
Let
$ S_{\de,1}:=\{(a,b,p); e^{-a}>\de^6\} (\textrm{corresponds to } \R_{\ve S^*})$.
\item
Let
$ S_{\de,2}:=\{(a,b,p); e^{-a}\leq\de^6,e^{-b}<\de\}(\textrm{corresponds to } \R_{\ve S^*})$.

\item Let
\begin{eqnarray*}
 S_{\de,3,k}:= J_{\de,k}, \  k\in\Z.
\end{eqnarray*}
$ (\textrm{corresponds to } \R_{\ve
Q^*})$ and $S_{\de,3}:= \bigcup_k S_{\de,3,k}$.

\item
Define the corresponding multiplication operators on $L^2(\R^3) $ by $
M_{\de,i}=M_{S_{\de,i}}$ for $i=1,2 $ respectively, and for
$k\in \Z $ : $ M_{\de,3,k}=M_{S_{\de,3,k}}$,$ M_{\de, 3}=M_{S_{\de,3}}$ .
\end{enumerate}
\end{definition}

\begin{remark}\label{iplusipp}
\begin{enumerate}\label{propofS}$  $
\item For any $ \de>0 $, the sets $ \{S_{\de,1}, S_{\de,2},S_{\de,3,k}; k\in
\Z\}$ form a partition of $ \R^3 $.
\item
Recall that for $ \al=(a,b,p),\be=(s,t,v)\in\R^3 $, the multiplication of $ E(s,t,v)\cdot E(a,b,p) $ is given by
\[
E(s,t,v)\cdot E(a,b,p)=E(s+a,t+b, e^{\frac{-a+b}{2}}v+p).
\]
For $\be\in K_{\de},\al\in S_{\de,3,k}$, we have that $ e^{{-a}}\leq \de^6, e^b< \frac{1}{\de}, \val v<\frac{1}{\de^{1/2}} $ and so $ e^{{\frac{-a+b}{2}}}\val v\leq \de^2$ and
\begin{eqnarray}\label{suminterpp}
\be\cdot \al
&=&
(s+a,t+b, e^{\frac{-a+b}{2}}v+p)\\
\nn &\in &
\R\ti\R \ti( [-\de^2,\de^2[+I_{\de,k})\\
\nn&\subset&(\bigcup_{i=-1}^1L_{{\de},k+i})=: T_{\de,3,k}.
\end{eqnarray}
Let $ N_{\de,3,k} $ be the multiplication operator with the characteristic function of the set $ T_{\de,3,k}$ for $k\in\Z $.
We see that the set $T_{\de,3,k}  $ is contained in the  union of $ 3 $
disjoint boxes $ L_{\de, k+i}$ for $i\in\{-1,0, 1\} $.
Hence it follows from the relations (\ref{suminterpp}) that for $ \de>0 $ small enough,
\begin{eqnarray*}
\widehat F^\h(st\inv,t\cdot (\ve(S^*+ Q^*)))=0 \text{ for } t\in S_{\de,3,k}, s\not \in  T_{\de,3, k} \text{ and } k\in\Z.
\end{eqnarray*}
Hence,
\begin{eqnarray}\label{multre2l}
N_{\de,3,k}\circ \pi_{\ve,\ve}(F)\circ M_{\de,3,k}=\pi_{\ve,\ve}(F)\circ M_{\de,3,k} \, \text{ for } k\in \Z.
\end{eqnarray}
\end{enumerate}
\end{remark}

The representation $  \pi_{\ve,\ve}=\ind_ H ^G \ch_{\ve(Q^*+S^*)} $ acts on $ \xi\in L^2(G/H,\ch_{\ve(Q^*+S^*)}) $ in the following way:
\begin{eqnarray*}\label{transpivevee}
\nn &&\pi_{\ve,\ve}(F)\xi(u)\\
\nn&=&
\int_{\R^3}e^{\frac{1}{2}(a-b)}\wh F^{\h}(s-a,t-b, e^{\frac{a-b}{2}}(v-p),\ve( e^{-b}(1+\frac{p^{2}}{2})Q^*+(-e^{-\frac{a+b}{2}}p)R^*+e^{-a}S^*))\xi(E(a,b,p))dadbdp.
\end{eqnarray*}
We define the linear operator $ \sigma_{\ve,\ve,\de}(F) $ by the rule
\begin{eqnarray}\label{vevedef}
 \si_{\ve,\ve,\de}(F):=
 \si_{\ve S^*}(F)\circ M_{\tilde \va_1}\circ M_{\de,1}+ \si_{\ve S^*}(F)\circ M_{\de,2}+\si_{\de,3}(F),
\end{eqnarray}
where $\si_{\de,3}(F):=\sum_{k\in\N} N_{\de,k,3}\circ \si_{\ve(1+\frac{\de^4 k^2}{2})Q^*}(F)\circ M_{\de,3,k}$,
the operator $ M_{\tilde \va_1}$ is the multiplication operator with the function $ \tilde \va_1(a,b,p):=\va(e^{-b}) $ where $ \va:\R \to \R $ is a $ C^{\iy} $-function with support contained in $ [-1,1] $ such that $ \va(0)=1 $ and $ \no\va_\iy=1 $. The kernel functions $ F_{\de,i}$, for $i=1,2,3 $, of these operators are given by
\begin{eqnarray*}\label{defveve}
F_{\de,1}((s,t,v),(a,b,p))
 &:=&
\wh F^{\h}(s-a,t-b, e^{\frac{a-b}{2}}(v-p),\ve( (\frac{e^{-b}p^{2}}{2})Q^*+(-e^{-\frac{a+b}{2}}p)R^*+e^{-a}S^*))\\
&&
e^{\frac{1}{2}(a-b)}\va(e^{{-b}})1_{S_{\de,1}}(a,b,p),\\
F_{\de,3}((s,t,v),(a,b,p))&=&
\sum_{k\in\N}\Big(\wh F^{\h}(s-a,t-b, e^{\frac{a-b}{2}}(v-p),\ve e^{-b}(1+\frac{\de^4 k^2}{2})Q^*)\Big)\\
&&
e^{\frac{1}{2}(a-b)}1_{S_{\de,k,3}}(a,b,p),\\
F_{\de,2}((s,t,v),(a,b,p))&:=&
\wh F^{\h}(s-a,t-b, e^{\frac{a-b}{2}}(v-p),\ve( \frac{e^{-b}p^{2}}{2})Q^*+(-e^{-\frac{a+b}{2}}p)R^*+e^{-a}S^*))\\
&&
e^{\frac{1}{2}(a-b)}1_{S_{\de,2}}(a,b,p).
\end{eqnarray*}

Now the representation $ \si_{\ve(1+\frac{k^2\de^4}{2})Q^*}$ is equivalent to the representation $ \si_{\ve Q^*} $, since both linear functionals are in the same $ G $-orbit. Therefore, for any $ k \geq 0 $ and $\de> 0$,
\begin{eqnarray}\label{sikde1}
\noop{ \si_{\ve(1+\frac{\de^4 k^2}{2})Q^*}(F)\circ M_{\de,3,k}}\leq \noop{\si_{\ve Q^*}(F)}.
\end{eqnarray}
It follows from Proposition \ref{se3bou} that there exists $C>0$ such that the operator $\si_{\de,3}(F)$ is bounded by $C$ for any $F \in L_c^1$.

The proof of the following proposition is similar but much easier than that of Proposition \ref{ve-sign} and will be left to the reader.

\begin{proposition}\label{veboco}
For any $ F\in L^1_c$ and $\de>0$, we have that
\begin{eqnarray*}
\noop{\sigma_{\ve,\ve,\de} (F)}\leq 2\noop{\si_{\ve S^*}(F)}+C\noop{\si_{\ve Q^*}(F)}
\end{eqnarray*}
and  we can extend the mapping $ F\mapsto \si_{\ve,\ve,\de}(F)$ to every $ F\in C^*(G)$, then
\[ \lim_{\de\to 0}\dis(\pi_{\ve(Q^*+S^*)}(F)-\sigma_{\ve,\ve,\de} (F),\K(L^{2}(\R^3)))=0 . \]
\end{proposition}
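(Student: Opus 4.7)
The plan is to mimic the proof of Proposition~\ref{ve-sign}, which is the substantially harder case because its boundary includes $R^*$ orbits and the two signs of $Q^*$; here only the two orbits $\Ad^*G(\ve S^*)$ and $\Ad^*G(\ve Q^*)$ appear as boundary components, so the partition of $\R^3$ has only three types of pieces and the $\pm$ splittings disappear. The subsets $S_{\de,1},S_{\de,2}$ and $\{S_{\de,3,k}\}_{k\in\Z}$ are pairwise disjoint and cover $\R^3$, so $\pi_{\ve,\ve}(F)=\pi_{\ve,\ve}(F)\circ M_{\de,1}+\pi_{\ve,\ve}(F)\circ M_{\de,2}+\sum_k \pi_{\ve,\ve}(F)\circ M_{\de,3,k}$, and we compare each summand with the corresponding piece of $\si_{\ve,\ve,\de}(F)$ defined in \eqref{vevedef}.

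For the norm bound, I would argue as follows. The first two summands of $\si_{\ve,\ve,\de}(F)$ are composed on the right with contractions ($M_{\tilde\va_1},M_{\de,1},M_{\de,2}$ all have operator norm $\leq 1$), hence are each bounded by $\noop{\si_{\ve S^*}(F)}$. For $\si_{\de,3}(F)$, the key observation is that the representations $\si_{\ve(1+\tfrac{\de^4 k^2}{2})Q^*}$ are unitarily equivalent to $\si_{\ve Q^*}$ (they lie in the same coadjoint orbit), so $\noop{\si_{\ve(1+\tfrac{\de^4 k^2}{2})Q^*}(F)}\leq \noop{\si_{\ve Q^*}(F)}$ uniformly in $k$. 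Since the $S_{\de,3,k}$ are pairwise disjoint and each $T_{\de,3,k}$ meets at most three of the $T_{\de,3,l}$ (by \eqref{suminterpp}), Proposition~\ref{sumsofop} gives $\noop{\si_{\de,3}(F)}\leq C\noop{\si_{\ve Q^*}(F)}$. Summing yields the claimed estimate.

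For the distance-to-compacts statement, I treat each of the three pieces of $\pi_{\ve,\ve}(F)-\si_{\ve,\ve,\de}(F)$ separately. For the piece corresponding to $S_{\de,1}$, the same $L^2$ computation as in Proposition~\ref{ve-sign} (using that the extra factor $(1-\va(e^{-b}))$ vanishes on $e^{-b}\leq 1/2$ and $e^{-a}>\de^6$ forces $a$ bounded below) shows that the difference $(\pi_{\ve,\ve}(F)-\si_{\ve S^*}(F)\circ M_{\tilde\va_1})\circ M_{\de,1}$ is Hilbert--Schmidt, hence compact, for every $\de>0$. For $S_{\de,2}$, on this set we have $e^{-b}<\de$, so replacing $\ve e^{-b}(1+p^2/2)Q^*$ by $\ve e^{-b}(p^2/2)Q^*$ in the argument of $\wh F^\h$ produces a pointwise error bounded by $\de\,\psi(s-a,t-b,e^{(a-b)/2}(v-p))\,e^{(a-b)/2}$ for some $\psi\in C_c(\R^3)$, and Young's inequality (Remark~\ref{remark_pre}(3)) bounds the corresponding operator norm by $O(\de)$. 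For $S_{\de,3,k}$, on this slice $p\in I_{\de^2,k}$ so $|p^2-\de^4 k^2|\leq \de^2(2|k|\de^2+\de^2)$; since $\wh F^\h$ has compact support in its last argument, only $k$'s with $\de^2|k|$ bounded (depending on $F$) contribute, and the three kernel differences produced by replacing $\ve e^{-b}(1+p^2/2)Q^*$ by $\ve e^{-b}(1+\de^4k^2/2)Q^*$ and by dropping the $R^*$-term $-e^{-(a+b)/2}p$ and the $S^*$-term $e^{-a}$ are pointwise of size $O(\de)$ after using $e^{-a}\leq\de^6$. Young's inequality then gives a uniform $O(\de)$-bound on the operator norm of $\sum_k\bigl(\pi_{\ve,\ve}(F)-N_{\de,3,k}\circ\si_{\ve(1+\tfrac{\de^4k^2}{2})Q^*}(F)\bigr)\circ M_{\de,3,k}$, via Proposition~\ref{sumsofop} applied to the family of Young estimates.

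Combining the three pieces shows that $\pi_{\ve,\ve}(F)-\si_{\ve,\ve,\de}(F)$ is, modulo a compact operator, of norm $O(\de)$. This proves the distance statement for $F\in L^1_c$. The main obstacle here is the same one that appears in Proposition~\ref{ve-sign}: showing that the slicewise approximation on the $S_{\de,3,k}$ yields a uniform operator norm bound, which is handled by the disjoint-support structure of $S_{\de,3,k}$ together with the bounded overlap of $T_{\de,3,k}$ via Proposition~\ref{sumsofop}. Finally, the uniform bound established in the first part allows extension of $F\mapsto \si_{\ve,\ve,\de}(F)$ by continuity to all of $C^*(G)$, and since $L^1_c$ is dense in $C^*(G)$ and the distance to $\K(L^2(\R^3))$ is continuous in the operator norm, the limit relation passes to every $F\in C^*(G)$.
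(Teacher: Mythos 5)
Your plan is exactly the argument the paper has in mind: the authors leave this proof to the reader as a ``similar but much easier'' version of Proposition \ref{ve-sign}, and your scheme --- compactness of the difference on $S_{\de,1}$ via a Hilbert--Schmidt estimate, an $O(\de)$ operator-norm bound on $S_{\de,2}$ and on each slice $S_{\de,3,k}$ via Remark \ref{remark_pre} and Young's inequality, summation over $k$ using the disjointness of the $S_{\de,3,k}$ and the box structure of the $T_{\de,3,k}$ through Proposition \ref{sumsofop}, the norm bound via contractivity of the multiplication operators and the equivalence $\si_{\ve(1+\frac{\de^4k^2}{2})Q^*}\simeq\si_{\ve Q^*}$, and finally density of $L^1_c$ --- is precisely that argument, and it does work.

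Two justifications in your sketch should nevertheless be tightened. First, on $S_{\de,1}$ the inequality $e^{-a}>\de^6$ bounds $a$ from \emph{above}; the lower bound on $a$ (and the localization in $b,p$) comes from the compact support of $\wh F^\h$ in the $\h^*$-variable, and since in this subsection only $\va(0)=1$ is required, the factor $1-\va(e^{-b})$ need not vanish for $e^{-b}\le\frac12$; the Hilbert--Schmidt integral still converges because of the factor $e^{-b/2}$ (or simply take $\va\equiv 1$ near $0$ as in Subsection \ref{vemve}). Second, it is not true that only those $k$ with $\de^2\vert k\vert$ bounded in terms of $F$ contribute: on $J_{\de,k}$ one only knows $e^{-b}\ge\de$, so the support condition $e^{-b}(1+\frac{\de^4k^2}{2})\le\rho$ allows $\de^2\vert k\vert$ of order $\de^{-1/2}$. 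The correct estimate --- the one carried out for $S_{\de,3,l,+}$ in the proof of Proposition \ref{ve-sign} --- uses the joint bounds coming from the compact support: whenever one of the two kernels is nonzero one has $e^{-b}\le\rho$ and either $e^{-b}p^2\le 2\rho$ or $e^{-b}\de^4k^2\le 2\rho$, hence $e^{-b}\vert p\vert$ and $e^{-b}\de^2\vert k\vert$ are bounded by constants depending only on $F$, which yields $e^{-b}\vert p^2-\de^4k^2\vert\le e^{-b}(\vert p\vert+\de^2\vert k\vert)\,\vert p-\de^2k\vert=O(\de^2)$, $e^{-\frac{a+b}{2}}\vert p\vert\le \de^3\,e^{-b/2}\vert p\vert=O(\de^3)$ and $e^{-a}\le\de^6$. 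With this replacement your pointwise $O(\de)$ bound on each slice, and therefore the whole proof, goes through unchanged.
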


\subsection{The boundary condition for $ \R_{\ve S^*}$, $\ve= \pm 1$}\label{vesbs}

In this subsection, we consider the subset $\GA_5=\{ \R_{\ve S^*}, \ve = \pm 1 \}$ of $\wh G$. We use the coordinates of $G$ according to the basis $\{X=A+B,B,P,Q,R,S \}$ of $\g$.
We recall that the boundary  of the orbit of $b^*B^*+ \ve S^* $ is the orbit
of the functionals $ a A^*+ \ve Q^*, x^* X^*\pm P^*$ and $a^*A^*+b^*B^*$ for
$a^*,b^*,x^*\in\R$. This tells us that we much find the conditions from these boundary points.

\begin{definition}\label{sisstdef}
Let
\rm   \begin{eqnarray*}
\si_{\ve S^*}&:=&\ind_ H ^G \ch_{\ve S^*}.
\end{eqnarray*}
The kernel function $\K_F $ of the operator $\si_{\ve S^*}(F) $ is given by
\begin{eqnarray*}
 \K_F(s,t,v;x,b,p)=\wh F^{\h}(s-x,t-b,e^{\frac{-b}{2}}(v-p),\ve(\frac{e^{-x-b}p^2}2)
Q^*-\ve(e^{-\frac{2x+b}{2}}p)R^*+\ve e^{-x}S^*)e^{-b/2}.
 \end{eqnarray*}
This representation is equivalent to the direct integral representation
\begin{eqnarray*}
  \ta_{\ve S^*}:=\int_\R ^\oplus\pi_{b^* B^* +\ve S^*}db^*
\end{eqnarray*}
acting on the Hilbert space
\begin{eqnarray*}
\H_{\ta_{\ve S^*}}=\int_\R^\oplus  L^2(G/\exp(\mathbb RB+\h),\ch_{b^*B^*+\ve
S^*})db^*\simeq \int_\R^\oplus  L^2(\R^2)db^*
\end{eqnarray*}
with the norm
\begin{eqnarray*}
\no\xi^2_2=\int_\R  \no{\xi(b^*)}_2^2 db^* \quad \text{for} \quad \xi\in \H_{\ta_{\ve S^*}}.
\end{eqnarray*}

An intertwining operator $ U_{\ve S^*} $ for this equivalence is given by
\begin{eqnarray*}
  U_{\ve S^*}(\xi)(b^*)(g):= \int_\R \xi(g\exp(b
B))e^{\frac{1}{4}b}e^{{- 2 \pi i b^*b}}db, \, \xi\in L^2(G/H, \ch_{\ve
S^*}), g\in G, b^*\in\R.
\end{eqnarray*}
Let $  C(\R,\B)  $ be the $ C^* $-algebra of all continuous uniformly bounded  mappings
from $ \R $ into the algebra $ \B $ of bounded operators on the Hilbert
space $ L^2(\R) $, containing the ideal $
C_0(\R,\K) $ of all continuous mappings from $ \R $ into the algebra of
compact operators on  $ L^2(\R) $ vanishing at infinity.
It follows from Section \ref{contcon} that the image of $ \ta_{\ve
S^*}$ is contained in $ C(\R,\B) $.
On the other hand, we can consider $ C(\R,\B) $ as a subalgebra of $
B(\H_{\ta_{\ve S^*}}) $ as for $ \ph\in C(\R,\B) $, let
\begin{eqnarray*}
\ph(\xi)(b^*):=\ph(b^*)(\xi(b^*)) \text{ for }\xi\in\H_{\ta_{\ve
S^*}}, b^*\in\R.
\end{eqnarray*}
The unitary mapping $ U_{\ve S^*} $ induces a canonical homomorphism $ \rh_{\ve S^*} $
from the algebra $  B(L^2(G/H,\ch_{\ve S^*})) $
onto $ B(\H_{\ta_\ve S^*}) $. This homomorphism is defined on $ \si_{\ve S^*}(a)$ by
\begin{eqnarray}\label{rhyuit}
  \rh_{\ve S^*}(\si_{\ve S^*}(a)) &= & U_{\ve S^*}\circ \si_{\ve S^*}(a)\circ U_{\ve S^*}^*\\
\nn&=&\int_\R^\oplus \pi_{b^*B^*+\ve S^*}(a)db^*\\
\nn&=&\ta_{\ve S^*}(a) \quad \text{for} \quad a\in C^*(G).
\end{eqnarray}
\end{definition}

\begin{definition}\label{comonS}
\rm   Let $ \partial \R_{\ve S^*} $ be the boundary of $\R_{\ve S^*}$ in $ \g^*/G $.
\end{definition}

It is easy to see from the relations \eqref{coadG}  that $ \partial \R_{\ve S^*}=\{Q,R,S\}^{\perp} + (\R^{+,0}\ve) Q^* \subset S^\perp$.

\begin{definition}\label{}
\begin{enumerate}\label{}
\item  For $ k \in\Z $, denote by
$$ J_{\de,k}:=[\log(\frac{1}{\de^6}),+\iy[\ti \R \ti  I_{\de^2,k},$$ where $I_{c,l}:=[c l,cl+c [\subset \R$ for $l\in\Z$ and $c>0$.

\item
Let
$ S_{\de,1}:=\{(x,b,p); e^{-x}>\de^6\}$.

\item  Let $ \de\mapsto r_\de\in \R^+ $ be such that $ \lim_{\de\to 0}r_\de=+\iy
$ and $ \lim_{\de\to 0}e^{r_\de} \de^{{1/2}}=0 $.

\item For a constant $ D>0 $ and $ k=(k_1,k_2,k_3)\in \Z^3 $, let
\begin{eqnarray*}
 S_{\de,D,3,k}:= \{(x,b,p)\in\R^3: e^{-x}\leq\de^6,
 x\in I_{r_\de,k_1}, b\in I_{r_\de,k_2}, p\in
I_{{D\de^2}{e^{r_\de/2(k_1+k_2)}},k_3}\}.
\end{eqnarray*}
\end{enumerate}
\end{definition}

\begin{proposition}\label{kmulsS}
For every compact subset $ K\subseteq \R^3 $, we have that
\begin{eqnarray*}
 KS_{\de,D,3,k}&\subset&
\bigcup_{ {j\in \Z^3}\atop{ \val {j_i}\leq 1,i=1,2,3}}S_{\de,
De^{\frac{r_\de}{2}(-j_1-j_2)}, 3,k+j}=: R_{\de, D,3,k}
\end{eqnarray*}
for every $ k\in\Z^3 $ and $ \de>0 $ small enough.
\end{proposition}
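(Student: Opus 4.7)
The plan is to use the explicit group multiplication
\[
E(u,u',u'') \cdot E(x,b,p) = E(u+x,\, u'+b,\, e^{b/2}u'' + p),
\]
which is derived from the commutation relations of the basis $\{X=A+B, B, P\}$ and is consistent with the kernel formula appearing in the definition of $\si_{\ve S^*}(F)$ in Definition \ref{sisstdef}. By compactness of $K$ fix $M>0$ with $|u|, |u'|, |u''| \leq M$ for every $(u,u',u'') \in K$, and take $\de$ small enough that $r_\de > M$.

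For the $s$- and $t$-coordinates the argument is immediate: since $|u| \leq M < r_\de$ and $x$ lies in the interval $I_{r_\de, k_1}$ of length $r_\de$, the translated value $s = u+x$ lies in $I_{r_\de, k_1-1} \cup I_{r_\de, k_1} \cup I_{r_\de, k_1+1}$, giving $s \in I_{r_\de, k_1 + j_1}$ with $|j_1| \leq 1$, and similarly $|j_2| \leq 1$. The condition $e^{-s} \leq \de^6$ is preserved up to the multiplicative factor $e^M$, which for $\de$ small is absorbed (after a harmless shrinking of $\de$) into the defining inequality.

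The critical step is the $v$-coordinate, and here the whole point of the exponent $e^{r_\de(-j_1-j_2)/2}$ in the definition of $R_{\de,D,3,k}$ becomes visible. One checks the calibration
\[
D\, e^{-r_\de(j_1+j_2)/2} \cdot \de^2 \cdot e^{r_\de(k_1+j_1+k_2+j_2)/2} \;=\; D\de^2 e^{r_\de(k_1+k_2)/2},
\]
so the $p$-interval width in each target set $S_{\de, De^{-r_\de(j_1+j_2)/2}, 3, k+j}$ equals that in the source $S_{\de, D, 3, k}$. Since $|u''| \leq M$ and $b \in I_{r_\de,k_2}$ gives $e^{b/2} \leq e^{r_\de(k_2+1)/2}$, the translation $e^{b/2}u''$ moves $p$ by at most
\[
\frac{M e^{r_\de(k_2+1)/2}}{D\de^2 e^{r_\de(k_1+k_2)/2}} \;=\; \frac{M e^{r_\de/2}}{D\de^2 e^{r_\de k_1/2}}
\]
interval widths. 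The non-emptiness condition $r_\de(k_1+1) \geq -6\log\de$ (forced by the constraints $e^{-x}\leq\de^6$ and $x \in I_{r_\de,k_1}$) yields $e^{r_\de k_1/2} \geq \de^{-3} e^{-r_\de/2}$, so the above ratio is bounded by $Me^{r_\de}\de/D$. The hypothesis $e^{r_\de}\de^{1/2} \to 0$ forces $e^{r_\de}\de \to 0$, hence for $\de$ small enough this ratio is $\leq 1$, giving $j_3 \in \{-1,0,1\}$.

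The main obstacle is recognizing the calibration between source and target interval widths: the factor $e^{r_\de(-j_1-j_2)/2}$ in $D'$ is engineered exactly so that admissible index shifts $|j_i|\leq 1$ preserve the width of the $p$-interval, so that the translation coming from $K$ shifts the grid by at most one step in each coordinate. Once this observation is in hand, the remaining work is a routine quantitative check exploiting both growth conditions on $r_\de$ (namely $r_\de \to \infty$ for the $s,t$-coordinates and $e^{r_\de}\de^{1/2}\to 0$ for the $v$-coordinate).
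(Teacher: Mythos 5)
Your argument is correct and is essentially the paper's own proof: translation by $K\subset[-M,M]^3$ shifts the $x$- and $b$-grid indices by at most one, and the factor $e^{\frac{r_\de}{2}(-j_1-j_2)}$ is precisely the calibration making each target $p$-interval width equal to the source width $D\de^2 e^{\frac{r_\de}{2}(k_1+k_2)}$, so your bound $Me^{r_\de}\de/D\le 1$ (available because $e^{r_\de}\de^{1/2}\to 0$) forces $|j_3|\le 1$. The paper reaches the same displacement estimate slightly more directly, using $e^{-x/2}\le\de^3$ at the given point together with $x+b<(k_1+k_2+2)r_\de$, instead of your lower bound on $k_1$ from nonemptiness, but the two computations are interchangeable.
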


\begin{proof}
Indeed, we have an $ M>0 $ such that $ K\subset  [-M, M]^3$ and then for $
r_\de> M $ and $ (s,t,v)\in K , (x,b,p)\in S_{\de,D,3,k}$, it follows that
\begin{eqnarray*}
 u:=(s,t,v)\cdot (x,b,p) = (s+x,t+b, e^{b/2}v+p),
\end{eqnarray*}
$ (k_1+j_1)r_\de\leq s+x<(k_1+j_1+1)r_\de$ and $ (k_2+j_2)r_\de\leq
t+b<(k_2+j_2+1)r_\de $ for some $ j_1,j_2\in \{-1,0,1\}$.  It follows that
\begin{eqnarray*}
  \vert e^{b/2}v\vert &\leq &\vert e^{-x/2}v\vert e^{(x+b)/2}\\
     &\leq &\de^3\vert v\vert e^{(x+b)/2}\\
   &\leq&  \de^3\vert v\vert e^{r_\de/2(-j_1-j_2)}{e^{r_\de/2(k_1+k_2+j_1+j_2)} }e^{r_\de}\\
&<&
De^{r_\de/2(-j_1-j_2)}\de^2{e^{r_\de/2(k_1+k_2+j_1+j_2)} }
\end{eqnarray*}
for $ \de$ small enough, since $\lim_{\de\to 0}e^{r_\de}\de^{1/2}=0$. Hence,
\begin{eqnarray*}
 p+e^{b}v&<&
(k_3+1)D e^{\frac{r_\de}{2}(-j_1-j_2)}
\de^2 e^{\frac{r_\de}{2}(k_1+k_2+j_1+j_2)} +e^{b}v\\
&<&(k_3+2)De^{\frac{r_\de}{2}(-j_1-j_2)}
\de^2 e^{\frac{r_\de}{2}(k_1+k_2+j_1+j_2)}
\end{eqnarray*}
and also
\begin{eqnarray*}
 p+e^{b}v&\geq&
{k_3De^{\frac{r_\de}{2}(-j_1-j_2)}\de^2}
{e^{\frac{r_\de}{2}(k_1+k_2+j_1+j_2)} }-e^{b}\vert v\vert \\
&\geq &{(k_3-1)De^{\frac{r_\de}{2}(-j_1-j_2)}\de^2}
{e^{\frac{r_\de}{2}(k_1+k_2+j_1+j_2)} }.
\end{eqnarray*}
Hence $ u$ is contained in $R_{\de, D, 3, k}$.
\end{proof}

We define the multiplication operators $M_{\de,1}=M_{S_{\de,1}}$, respectively,
$M_{\de,D,3, k}=M_{S_{\de,D,3,k}}$  and $P_{\de,D,3,k}:=M_{R_{\de,D,3,k}}$  on $L^2(\R^3) $.
Then we have the following proposition.

\begin{proposition}\label{eqnm}
\begin{eqnarray*}
  \ol M_{\de,1}\circ U_{\ve S^*}= U_{\ve S^*}\circ  M_{\de,1} \quad \text{for} \quad \de>0,
\end{eqnarray*}
where $ \ol M_{\de,1}=M_{\{(x, p); e^{-x}>\de^6\}}$ on $ L^2(\R^2) $.
\end{proposition}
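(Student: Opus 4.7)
The plan is essentially a one-line computation once the coordinates are set up. The key observation is that $S_{\de,1}=\{(x,b,p): e^{-x}>\de^6\}$ is a cylinder depending only on the $x$-variable, whereas $U_{\ve S^*}$ is essentially a partial Fourier transform in the $b$-variable (with a density factor $e^{b/4}$). Multiplication by a function of $x$ alone therefore passes through the transform in the obvious way, and the statement reduces to this trivial commutation.

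To be explicit, I would parametrize a generic element of $G/H$ by $(x,b,p)\mapsto \exp(xX)\exp(bB)\exp(pP)$, so that $\xi \in L^2(G/H,\ch_{\ve S^*})\simeq L^2(\R^3)$ is viewed as a function of $(x,b,p)$ and $(M_{\de,1}\xi)(x,b,p)=1_{\{e^{-x}>\de^6\}}(x)\,\xi(x,b,p)$. For $G/\exp(\R B+\h)$ I would pick coset representatives $g'=\exp(xX)\exp(pP)$, so that as $b$ varies $g'\exp(bB)$ traces out a slice of $G/H$ along which only the $b$-coordinate changes. The crucial fact is that $[X,B]=[A+B,B]=0$ and $X$ commutes with $P$, so the $x$-coordinate of $g'\exp(bB)$ equals the $x$-coordinate of $g'$. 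Plugging into the definition of $U_{\ve S^*}$ given in Definition~\ref{sisstdef} yields
\begin{align*}
[U_{\ve S^*}(M_{\de,1}\xi)](b^*)(g')
&=\int_\R 1_{\{e^{-x}>\de^6\}}(x)\,\xi(g'\exp(bB))\,e^{b/4}\,e^{-2\pi i b^* b}\,db \\
&=1_{\{e^{-x}>\de^6\}}(x)\,[U_{\ve S^*}\xi](b^*)(g') \\
&=[\ol M_{\de,1}\,U_{\ve S^*}\xi](b^*)(g'),
\end{align*}
since $\ol M_{\de,1}$ on $L^2(\R^2)$ is multiplication by the same $x$-characteristic function in the $(x,p)$ coordinates.

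There is essentially no obstacle beyond coordinate bookkeeping: the only substantive point is that the $x$-coordinate on $G/H$ is invariant under right translation by $\exp(\R B)$ modulo $H$, which is immediate from $[X,B]=0$ and the fact that $\exp(\R B)$ normalises $\h$. The proposition is then a formal consequence of $S_{\de,1}$ being a cylinder in the $x$-direction, orthogonal to the $b$-direction along which $U_{\ve S^*}$ Fourier-transforms.
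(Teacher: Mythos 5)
Your proof is correct and is essentially the paper's own argument: the paper likewise evaluates $U_{\ve S^*}(M_{\de,1}\xi)$ at $E(x,p)=\exp(xX)\exp(pP)$, notes that the $x$-coordinate of $E(x,p)\exp(bB)$ is still $x$, pulls $1_{\{e^{-x}>\de^6\}}$ out of the $b$-integral, and identifies the result with $\ol M_{\de,1}(U_{\ve S^*}\xi)$. The only difference is cosmetic bookkeeping in how the coset representatives are written.
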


\begin{proof}
Indeed, for $ \xi\in L^2(G/H,\ch_{S^*}), b\in\R$ and $g\in G$, we have that
\begin{eqnarray*}
U_{S^*}(M_{\de,1}(\xi)(E(x,p)) &= & \int_\R M_{\de,1}( \xi)(E(x,p)\exp(b
B))e^{\frac{1}{4}b}e^{{- 2 \pi i b^*b}}db\\
&=&\int_\R 1_{\{e^{-x}>\de^6\}}\xi(E(x,p)\exp(b
B))e^{\frac{1}{4}b}e^{{- 2 \pi i b^*b}}db\\
&=&1_{\{e^{-x}>\de^6\}}(E(x,p))\int_\R \xi(E(x,p)\exp(b
B))e^{\frac{1}{4}b}e^{{- 2 \pi i b^*b}}db\\
&=&\ol M_{\de,1}(U_{S^*}\xi)(E(x,p)),
\end{eqnarray*}
here $E(x,p)=\exp(x X)\exp(p P)$ for $x,p\in\R$.
\end{proof}

\begin{definition}\label{sisstardef}
\rm
For $ D>0 $, we define the  linear operator $ \sigma_{\ve,\de, D}(F) $ on $
L^2(G/H,\ch_{\ve S^*})=L^2(\R^3) $ by the rule
\begin{eqnarray*}
  &&\si_{\ve,\de,D}(F) :=\sum_{k\in\Z^3}P_{{\de,D,3,k}}\circ \si_{({k_3^2 \de^3 D^2e^{r_\de(k_1+k_2)}})\ve Q^*}(F)\circ M_{\de,D,3, k}.
\end{eqnarray*}

\end{definition}

Now the representation $\si_{({k_3^2 \de^3 D^2e^{r_\de(k_1+k_2}})\ve Q^*}$ is equivalent
to the representation $ \si_{\ve Q^*} $, since both linear functionals are
in the same $ G $-orbit. Therefore for any $ k,\de $,
\begin{eqnarray}\label{sikde1}
\noop{ \si_{({k_3^2 \de^3 D^2e^{r_\de(k_1+k_2)}})\ve Q^*}(F)\circ
M_{\de,D,3,k})}&\leq&\noop{\si_{\ve Q^*}(F)}.
\end{eqnarray}

\begin{proposition}\label{boundedsetw}
For every $ F\in C^*(G) $ and $\de>0 $, the operator $ \si_{\ve,\de,D}(F) $ is bounded in the operator norm by  $ 3^3\noop{\si_{\ve Q^*}(F)} $.
\end{proposition}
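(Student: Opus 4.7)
The plan is to apply Proposition \ref{sumsofop} after decomposing each multiplication operator $P_{\de,D,3,k}=M_{R_{\de,D,3,k}}$ into the $3^{3}=27$ elementary pieces that make up $R_{\de,D,3,k}$ and then reindexing the resulting double sum.

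First I would observe that, by the very definition of $R_{\de,D,3,k}$ given in Proposition \ref{kmulsS},
\[
R_{\de,D,3,k}=\bigcup_{j\in\{-1,0,1\}^3}T_{k,j},\qquad T_{k,j}:=S_{\de,\,De^{r_\de/2(-j_1-j_2)},\,3,\,k+j},
\]
and these $27$ sets are pairwise disjoint. Indeed, for distinct $j,j'\in\{-1,0,1\}^3$ the shifted indices $k+j$ and $k+j'$ differ in at least one coordinate; from the definition of $S_{\de,D',3,m}$ a disagreement in the first or second coordinate forces the intervals $I_{r_\de,m_1}$ or $I_{r_\de,m_2}$ to be disjoint, while if only $m_3$ changes (the first two coordinates agreeing) the $p$-scale
\[
De^{r_\de/2(-j_1-j_2)}\de^{2}e^{r_\de/2((k+j)_1+(k+j)_2)}\;=\;D\de^{2}e^{r_\de/2(k_1+k_2)}
\]
is in fact independent of $j$, so different values of $m_3$ again give disjoint $p$-intervals. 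Consequently $M_{R_{\de,D,3,k}}=\sum_{j\in\{-1,0,1\}^3}M_{T_{k,j}}$.

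Substituting this into the definition of $\si_{\ve,\de,D}(F)$ and swapping the order of summation yields
\[
\si_{\ve,\de,D}(F)=\sum_{j\in\{-1,0,1\}^3}\sum_{k\in\Z^3}M_{T_{k,j}}\circ\si_{(k_3^{2}\de^{3}D^{2}e^{r_\de(k_1+k_2)})\ve Q^*}(F)\circ M_{S_{\de,D,3,k}}.
\]
I would then verify the two disjointness hypotheses of Proposition \ref{sumsofop}: the sets $S_{\de,D,3,k}$ are pairwise disjoint as $k$ ranges over $\Z^3$ since they occupy distinct cells of the $(x,b,p)$-partition, and for each fixed $j$ the sets $T_{k,j}$ are pairwise disjoint as $k$ varies by the identical argument (with $j$ fixed, the index $k+j$ determines the cell).

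Finally, as already recorded in \eqref{sikde1}, the linear functional $(k_3^{2}\de^{3}D^{2}e^{r_\de(k_1+k_2)})\ve Q^*$ lies in the same coadjoint $G$-orbit as $\ve Q^*$, so the corresponding induced representations are unitarily equivalent and hence $\noop{\si_{(k_3^{2}\de^{3}D^{2}e^{r_\de(k_1+k_2)})\ve Q^*}(F)}\leq\noop{\si_{\ve Q^*}(F)}$ uniformly in $k$. Applying Proposition \ref{sumsofop} with $N=3^{3}=27$ and $C=\noop{\si_{\ve Q^*}(F)}$ then delivers the asserted bound $\noop{\si_{\ve,\de,D}(F)}\leq 3^{3}\noop{\si_{\ve Q^*}(F)}$. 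The only mildly delicate point is the $j$-independence of the $p$-scale in the first step, and this is precisely the reason the exponential factor $e^{r_\de/2(-j_1-j_2)}$ was inserted into the definition of $R_{\de,D,3,k}$ in Proposition \ref{kmulsS}; beyond that the argument is purely a bookkeeping reduction to Proposition \ref{sumsofop}.
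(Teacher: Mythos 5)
Your argument is correct and follows essentially the same route as the paper: split each $P_{\de,D,3,k}=M_{R_{\de,D,3,k}}$ into the $3^3$ constituent pieces, check the two disjointness hypotheses, use the orbit-equivalence bound \eqref{sikde1}, and invoke Proposition \ref{sumsofop} with $N=27$. The only difference is cosmetic: you verify that the $27$ sets $S_{\de,\,De^{r_\de/2(-j_1-j_2)},3,k+j}$ are themselves pairwise disjoint (via the $j$-independence of the $p$-scale), whereas the paper simply decomposes $R_{\de,D,3,k}$ into disjoint measurable subsets $R_{k,j}$ each contained in one of them, which suffices for the same conclusion.
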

\begin{proof}
For $ k\in\Z^3 $, decompose the set $ R_{\de,D,3,k} $ into a disjoint union of measurable subsets $ R_{k,j}, j= (j_1,j_2,j_3)\in \Z^3$ with $\val{j_i}\leq 1, i=1,2,3 $, such that $ R_{k,j} \subset S_{\de,
De^{\frac{r_\de}{2}(-j_1-j_2)}, 3,k+j} $ for every $ j $. This gives us at most $ 3^3 $ such subsets $ R_{k,j} $ for fixed $k$. These sets $ R_{k,j} $ are disjoint in $ k $ for fixed $ j $, since the sets $S_{\de,
De^{\frac{r_\de}{2}(-j_1-j_2)}, 3,k+j}$ are mutually disjoint in $k$ for fixed $j$.  It suffices then to apply Proposition \ref{sumsofop}.
\end{proof}

\begin{proposition}\label{Rdemulcom}
Let $ a\in C^*(G) $. Then the element $ \ta_{\ve S^*}(a)\circ
\ol M_{\de,1}:=\int_\R^\oplus \pi_{b^*B^*+\ve S^*}(a)\circ \ol M_{\de,1}db^* $ is in
$ C_0(\R,\K) $ for any $ \de>0 $.
\end{proposition}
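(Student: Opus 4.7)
The plan is to reduce to $a\in L^1_c$ by density in $C^*(G)$, and then to verify the three conditions for $C_0(\R,\K)$-membership: \textbf{(i)} each $\pi_{b^*B^*+\ve S^*}(a)\circ \ol M_{\de,1}$ is compact; \textbf{(ii)} the map $b^*\mapsto \pi_{b^*B^*+\ve S^*}(a)\circ \ol M_{\de,1}$ is continuous in operator norm; \textbf{(iii)} the norm tends to $0$ as $|b^*|\to\infty$. Continuity (ii) is immediate from Theorem~\ref{contonR} applied to the stratum $\GA_5$, so only (i) and (iii) need work.

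For (i) I would use the kernel formula \eqref{fdthreeker}. For $F\in L^1_c$ the partial Fourier transform $\wh F^\h$ has compact support on $G/H\times\h^*$: there exist $\rh>0$ and a compact $K\subset\R^3$ such that $\wh F^\h(E_0(s,v)\exp(bB))(qQ^*+rR^*+zS^*)=0$ unless $(s,v,b)\in K$ and $\max(|q|,|r|,|z|)\le \rh$. The spectral variable appearing in \eqref{fdthreeker} is $\ve e^{-x}(\tfrac12 p^2Q^*-pR^*+S^*)$, so the integrand forces $e^{-x}\le \rh$, i.e.\ $x\ge -\log\rh$. The cutoff coming from $\ol M_{\de,1}$ gives $e^{-x}>\de^6$, i.e.\ $x<-6\log\de$; hence $x\in[-\log\rh,-6\log\de)$. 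Then $e^{-x}p^2\le 2\rh$ bounds $p$, compactness of $K$ bounds $b$, and the conditions $(t-x,u-e^{-b/2}p,b)\in K$ force $(t,u)$ into a bounded set as well. Thus the kernel of $\pi_{b^*B^*+\ve S^*}(F)\circ\ol M_{\de,1}$ is bounded and compactly supported on $\R^2\times\R^2$, giving a Hilbert--Schmidt (in particular compact) operator. Density of $L^1_c$ in $C^*(G)$ and closedness of $\K$ transfer compactness to every $a\in C^*(G)$.

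For (iii), I would write, for $F\in L^1_c$,
\[
\pi_{b^*B^*+\ve S^*}(F)\circ\ol M_{\de,1}\;=\;\int_\R e^{ib^*b}\,S_b\,db,
\]
where $S_b$ is the bounded operator on $L^2(\R^2)$ with kernel
\[
\wh F^\h\bigl(E_0(t-x,u-e^{-b/2}p)\exp(bB)\bigr)\bigl(\ve e^{-x}(\tfrac12 p^2Q^*-pR^*+S^*)\bigr)\,e^{b/4}\,1_{\{e^{-x}>\de^6\}}(x,p).
\]
The analysis in (i) shows that the full Hilbert--Schmidt kernel $g(t,u,x,p,b)$ is bounded and compactly supported in all five variables, so the Hilbert--Schmidt norm squared equals
\[
\int\Bigl|\int e^{ib^*b}g(t,u,x,p,b)\,db\Bigr|^2 dt\,du\,dx\,dp,
\]
which tends to $0$ as $|b^*|\to\infty$ by the scalar Riemann--Lebesgue lemma applied in $b$ together with dominated convergence over the bounded $(t,u,x,p)$-support. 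A standard $\eta/3$-approximation, combined with the uniform estimate $\noop{\pi_{b^*B^*+\ve S^*}(a-F)\circ\ol M_{\de,1}}\le \no{a-F}_{C^*(G)}$, extends (iii) to every $a\in C^*(G)$.

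The main obstacle is step (i): neither the cutoff $e^{-x}>\de^6$ nor the compact spectral support of $\wh F^\h$ alone bounds $x$, but their combination pins $x$ into a finite window $[-\log\rh,-6\log\de)$, which in turn controls $p$ via $e^{-x}p^2\le 2\rh$. This precise interplay, peculiar to the stratum $\R_{\ve S^*}$, is what turns the representation into a Hilbert--Schmidt operator once truncated by $\ol M_{\de,1}$, and via Riemann--Lebesgue it also drives the $C_0$-behaviour in $b^*$.
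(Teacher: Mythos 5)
Your proposal is correct, and its overall architecture (reduce to $F\in L^1_c$, prove compactness of each $\pi_{b^*B^*+\ve S^*}(F)\circ\ol M_{\de,1}$, prove vanishing of the norm as $|b^*|\to\iy$, then transfer to all of $C^*(G)$ by the contractive density argument; continuity on $\GA_5$ being already covered by Theorem \ref{contonR}) coincides with the paper's. The compactness step is essentially the paper's argument in a slightly different dress: you observe directly that the cutoff $e^{-x}>\de^6$ together with the compact spectral support of $\wh F^\h$ (the $S^*$-coefficient $e^{-x}\le\rh$) pins $x$ into a finite window, which then bounds $p$, $b$ and $(t,u)$, so the kernel is bounded with compact support and hence Hilbert--Schmidt; the paper reaches the same conclusion by estimating the Hilbert--Schmidt norm with the Minkowski-type inequality of Remark \ref{remark_pre} and a change of variables.

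Where you genuinely diverge is the behaviour at infinity. The paper integrates by parts in the $b$-variable, producing the factor $(ib^*+\tfrac14)^{-1}$, and then bounds the operator norm of the resulting kernel by a Schur/Young estimate, obtaining the quantitative decay $\noop{\pi_{b^*B^*+\ve S^*}(F)\circ\ol M_{\de,1}}\le C/|b^*|$. You instead dominate the operator norm by the Hilbert--Schmidt norm and let the scalar Riemann--Lebesgue lemma in $b$, combined with dominated convergence over the bounded $(t,u,x,p)$-support, drive the Hilbert--Schmidt norm to $0$. Your route is softer and a bit more economical (no differentiability of $\wh F^\h$ in $b$ is needed, only integrability and compact support, and no second Schur estimate), but it yields no rate; the paper's integration-by-parts technique gives an explicit $O(1/|b^*|)$ bound and is the template reused verbatim for the analogous statement on the stratum $\R_{\ve R^*}$ (Proposition \ref{Rdemulcomrver}). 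For the present proposition, whose conclusion only requires membership in $C_0(\R,\K)$, your rate-free argument is fully sufficient.
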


\begin{proof}
We must prove first  that the operators  $  \pi_{b^*B^*+\ve S^*}(a)\circ \ol  M_{\de,1}$ for all $b^*\in\R$ and $\de>0 $ are all compact.
The kernel function $ F_{b^*} $ of the operator $ \pi_{b^*B^*+ \ve S^*}(F)\circ\ol{M_{\de,1}}
$ is given by Formula (\ref{fdthreeker}):
\begin{eqnarray*}
F_{b^*}((t,u),(x,p))&=&1_{\{
e^{-x}>\de^6\}} \int_{\mathbb R} \widehat{F^{\mathfrak h}}(E(t-x,u-e^{-\frac b2}p)\exp bB)\\
&&\left(\varepsilon e^{-x}\big(\frac12 p^2 Q^*- pR^*+S^*\big)\right)
e^{ib^*b}e^{\frac b4}db.
\end{eqnarray*}
Hence for $ F\in L^1_c $, using Remark \ref{remark_pre}:
\begin{eqnarray*}
  \no{ \pi_{b^*B^*+\ve S^*}(F)\circ \ol M_{\de,1}}_{H-S}^2 &= &\int_{\R^4}1_{\{
e^{-x}>\de^6\}}
\val{ F_{b^*}(t,u,x,p)}^2dxdtdu dp\\
  &=&\int_{\R^4}1_{\{
e^{-x}>\de^6\}}
\val{ \int_{\mathbb R} \widehat{F^{\mathfrak h}}(E(t-x,u-e^{-\frac b2}p)\exp bB)\\
&&\left(\varepsilon e^{-x}\big(\frac12 p^2 Q^*- pR^*+S^*\big)\right)
e^{ib^*b}e^{\frac b4}db}^2dxdtdu dp\\
&\leq &(\int_\R(\int_{\R^4}1_{\{ e^{-x}>\de^6\}}
\vert \widehat{F^{\mathfrak h}} (E(t-x,u-e^{-\frac b2}p)\exp bB)\\
&&\left(\varepsilon e^{-x}\big(\frac12 p^2 Q^*- pR^*+S^*\big)\right)
\vert^2dxdtdu dp)^{\frac{1}{2}}e^{\frac b4}db)^2\\
&=&(\int_\R(\int_{\R^4}1_{\{ e^{-x}>\de^6\}}
\vert \widehat{F^{\mathfrak h}} (E(t,u)\exp bB)\\
&&\left( \ve \big(\frac12 e^xp^2 Q^*- pR^*+e^{-x}S^*\big)\right)\vert^2 e^{ x}
dxdtdu dp)^{\frac{1}{2}}e^{\frac b4}db)^2\\
&<&\iy,
\end{eqnarray*}
since the function
\begin{eqnarray*}
 (t,u,x,p,b)\mapsto 1_{\{e^{-x}>\de^6\}}\widehat{F^{\mathfrak h}} (E(t,u)\exp bB)
\left( \ve \big(\frac12 e^xp^2 Q^*- pR^*+e^{-x}S^*\big)\right) e^{ x}
)e^{\frac b4}
 \end{eqnarray*}
has compact support.

Now we must  show that $ \lim_{b^*\to \iy}\noop{\pi_{b^*B^*+\ve S^*}(F)\circ \ol M_{\de,1} }=0. $
The kernel function $ F_{b^*} $ of the operator $ \pi_{b^*B^*+ \ve S^*}(F)\circ\ol{M_{\de,1}}  $
can be written, using partial integration, as
\begin{eqnarray*}
F_{b^*}((t,u),(x,p))&=&
1_{\{e^{-x}>\de^6\}} \int_{\mathbb R} \widehat{F^{\mathfrak h}}(E(t-x,u-e^{-\frac b2}p)\exp bB)\\
&&
\left(\varepsilon e^{-x}\big(\frac12 p^2 Q^*- pR^*+S^*\big)\right)
e^{ib^*b+\frac{b}{4}}db\\
&=&
1_{\{e^{-x}>\de^6\}} \int_{\mathbb R}\frac{1}{ib^*+\frac{1}{4}} (e^{-b/2}p\partial_2+\partial _B)\widehat{F^{\mathfrak h}}
 (E(t-x,u-e^{-\frac b2}p)\exp bB)\\
&&
\left(\varepsilon e^{-x}\big(\frac12 p^2 Q^*- pR^*+S^*\big)\right)
e^{ib^*b}e^{\frac b4}db\\
&=&
1_{\{e^{-x}>\de^6\}} \int_{\mathbb R} \frac{1}{ib^*+\frac{1}{4}}(e^{-b/2} p\partial_2+\partial _B)\widehat{F^{\mathfrak h}}
 (E(t-x,u-e^{-\frac b2}p)\exp bB)\\
&&
\left(\varepsilon \big(\frac12 e^{-x}p^2 Q^*- e^{-x}pR^*+e^{-x}S^*\big)\right)
e^{ib^*b}e^{\frac b4}db.
\end{eqnarray*}
Therefore,
\begin{eqnarray*}
&&\sup_{x,p}\int_{\R^2}\vert{F_{b^*}}((t,u), (x,p))\vert dt du \\
&\leq&\sup_{x,p}1_{\{
e^{-x}>\de^6\}} \int_{\mathbb R} \frac{1}
{\vert{ib^*+\frac{1}{4}}\vert}\vert (e^{-b/2}e^x(e^{-x} p)\partial_2+\partial _B)
 \widehat{F^{\mathfrak h}}(E(t-x,u-e^{-\frac b2}p)\exp bB)\\
&&\left(\varepsilon \big(\frac12 e^{-x}p^2 Q^*- e^{-x}pR^*+e^{-x}S^*\big) \right)\vert
e^{\frac b4}dbdt du \\
&<&\frac{C}{\vert b^*\vert },
\end{eqnarray*}
for $\val {b^*}\geq 1$ and for some constant $C>0$. Furthermore,
\begin{eqnarray*}
&&\sup_{t,u}\int_{\R^2}\vert{F_{b^*}}((u,v), (x,p))\vert dx dp
\\&\leq&
 \int_{\mathbb R} {\vert
\frac{1}{ib^*+\frac{1}{4}}\vert}1_{\{
e^{-x}>\de^6\}}(x)\vert (e^{-b/2}e^x(e^{-x} p)\partial_2+\partial _B)\widehat{F^{\mathfrak h}}
(E(t-x,u-e^{-\frac b2}p)\exp bB)\\
&&\left(\varepsilon \big(\frac12 e^{-x}p^2 Q^*- e^{-x}pR^*+e^{-x}S^*\big) \right)\vert
e^{\frac b4}dbdxdp\\
&<&\frac{C}{\vert b^*\vert },
\end{eqnarray*}
again because  the function
\begin{eqnarray*}
 (t,u,x,p,b)\mapsto 1_{\{e^{-x}>\de^6\}}\widehat{F^{\mathfrak h}} (E(t-x,u-e^{-\frac{b}{2}}p)
 \exp (bB))
\left( \ve \big(\frac12 p^2 Q^*- pR^*+S^*\big)e^{-x} \right)
e^{\frac b4}
 \end{eqnarray*}
has compact support and is $C^\iy$ in the variables $t,u,p,b$.
Hence, by the Young's inequality, for $ F\in L^1_c $, there exists a constant $ C>0 $
such that $ \noop{\si_{b^*B^*+\ve S^*}(F)}\leq \frac{C}{\val { b^*}}$ for large enough $b^*\in\R^*$.
\end{proof}

\begin{proposition}\label{mde2issmq}
For any $ F\in L^1_c $, we have that
\begin{eqnarray*}
  \lim_{\de\to 0}\noop{\si_{\ve S^*}(F)(1-M_{\de,1})-\si_{\ve,\de,D}(F)}=0.
\end{eqnarray*}
\end{proposition}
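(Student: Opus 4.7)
I would decompose the operator on the left via the partition $(1-M_{\de,1})=\sum_{k\in\Z^3} M_{\de,D,3,k}$ of the region $\{(x,b,p):e^{-x}\leq \de^6\}$ into cells $S_{\de,D,3,k}$, and match each resulting summand against the corresponding term in the definition of $\si_{\ve,\de,D}(F)$. Writing $\la_k:=k_3^2\de^3 D^2 e^{r_\de(k_1+k_2)}$, the difference becomes
\[
\si_{\ve S^*}(F)(1-M_{\de,1})-\si_{\ve,\de,D}(F)=\sum_{k\in\Z^3}\Bigl(\si_{\ve S^*}(F)-P_{\de,D,3,k}\circ \si_{\la_k\ve Q^*}(F)\Bigr)\circ M_{\de,D,3,k}.
\]
For $F\in L^1_c$, the partial Fourier transform $\wh F^\h$ is $C^\iy$ and compactly supported in the $G/H$-variable, say in a compact set $K\subset\R^3$. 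Proposition \ref{kmulsS} then gives $K\cdot S_{\de,D,3,k}\subseteq R_{\de,D,3,k}$ for all $k$ and $\de$ small, so the kernel of $\si_{\ve S^*}(F)\circ M_{\de,D,3,k}$ is automatically supported in $R_{\de,D,3,k}\times S_{\de,D,3,k}$, and inserting $P_{\de,D,3,k}$ on the left changes nothing. The same applies to $\si_{\la_k\ve Q^*}(F)\circ M_{\de,D,3,k}$.

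The next step is to compare the two kernels directly. Both have the common form $e^{-b/2}\wh F^\h\bigl(s-x,t-b,e^{-b/2}(v-p),q(x,b,p)\bigr)$, differing only in the $\h^*$-argument: for $\si_{\ve S^*}(F)$ it is $q_1(x,b,p)=\ve\tfrac{e^{-x-b}p^2}{2}Q^*-\ve e^{-(x+b)/2}pR^*+\ve e^{-x}S^*$ (computed from the coadjoint orbit of $\ve S^*$), while for $\si_{\la_k\ve Q^*}(F)$ it is $q_2(b)=\la_k\ve e^{-b}Q^*$. By smoothness and compact support of $\wh F^\h$ (Remark \ref{remark_pre}(2)), the mean value inequality in the $q$-variable yields a continuous compactly supported $\psi$ on $\R^3$ with
\[
\bigl|\wh F^\h(s,t,v,q_1)-\wh F^\h(s,t,v,q_2)\bigr|\leq \psi(s,t,v)\,\no{q_1-q_2}_{\h^*}
\]
pointwise, uniformly in $k$.

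The crux is then to show that $\no{q_1-q_2}_{\h^*}\to 0$ as $\de\to 0$, uniformly on $S_{\de,D,3,k}$ intersected with the nonvanishing locus of $\wh F^\h$. The $S^*$-component equals $\ve e^{-x}\leq \de^6$; the $Q^*$-component equals $e^{-b}\val{\frac{e^{-x}p^2}{2}-\la_k}$, which is small by the specific choice of $\la_k$ matching the representative value of $e^{-x}p^2/2$ across the cell together with the small variation of $(x,p)$ within it; and the $R^*$-component equals $e^{-(x+b)/2}\val p$, which is controlled using the cell $p$-width $D\de^2 e^{r_\de(k_1+k_2)/2}$, the compact $q$-support of $\wh F^\h$ (which bounds $k_3$ by something of order $\de^{-2}e^{r_\de}$), and crucially the hypothesis $\lim_{\de\to 0}e^{r_\de}\de^{1/2}=0$. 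Young's inequality (Remark \ref{remark_pre}(3)) converts the pointwise kernel estimate into an operator norm bound on each cell that vanishes as $\de\to 0$, uniformly in $k$. Summing over $k$ by Proposition \ref{sumsofop} (with the $\{S_{\de,D,3,k}\}$ disjoint and the targets $\{R_{\de,D,3,k}\}$ with $3^3$-bounded overlap by Proposition \ref{kmulsS}) then yields the conclusion.

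The main obstacle is the uniform control of the $R^*$-component of $q_1-q_2$. While the $S^*$- and $Q^*$-contributions are tamed either by the trivial bound $e^{-x}\leq \de^6$ or by the matching choice of $\la_k$, the quantity $e^{-(x+b)/2}\val p$ is a priori of order one within a cell. Its smallness is precisely the reason the $p$-width of the cells is scaled as $D\de^2 e^{r_\de(k_1+k_2)/2}$: this tuning, combined with the slow growth of $r_\de$ relative to $\de$, is the delicate point where the geometry of the partition is exploited.
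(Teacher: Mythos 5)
Your skeleton coincides with the paper's proof: partition $\{e^{-x}\le\de^6\}$ into the cells $S_{\de,D,3,k}$, use Proposition \ref{kmulsS} to absorb the projections $P_{\de,D,3,k}$, compare the two kernels through the Lipschitz-type estimate of Remark \ref{remark_pre}, apply Young's inequality on each cell, and sum by Proposition \ref{sumsofop}. The gap lies in the componentwise estimate of $q_1-q_2$, which is the whole content of the proposition. You mis-transcribed the $R^*$-coefficient of the $\si_{\ve S^*}$-kernel: in these coordinates it is $-\ve e^{-\frac{2x+b}{2}}p$ (the total $A$- and $B$-coordinates of $\exp(xX)\exp(bB)$ are $x$ and $x+b$), not $-\ve e^{-\frac{x+b}{2}}p$. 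With the correct coefficient the $R^*$-term is the trivial one: $e^{-\frac{2x+b}{2}}|p|=e^{-x/2}\,e^{-\frac{x+b}{2}}|p|\le\de^3\rh^{1/2}$ on the support of $\wh F^\h$ (where $e^{-x-b}p^2\le\rh$), with no use of the cell geometry at all. With your coefficient the term is genuinely of order one (the support condition gives exactly $e^{-\frac{x+b}{2}}|p|\le\rh^{1/2}$ and nothing smaller), and the cure you propose fails: the cell $p$-width together with the bound $|k_3|\lesssim\de^{-2}e^{r_\de}$ only yields $e^{-\frac{x+b}{2}}|p|\le(|k_3|+1)D\de^2\lesssim\rh^{1/2}e^{r_\de}+D\de^2$, which diverges as $\de\to0$ since $r_\de\to\iy$. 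So the step you single out as ``the delicate point'' rests on a wrong formula, and the argument you sketch for it would not close.

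Symmetrically, the $Q^*$-term --- the one the $p$-width $D\de^2e^{\frac{r_\de}{2}(k_1+k_2)}$ is actually designed for --- is not small for the reason you give. Both kernels carry $e^{-x-b}$ in the $Q^*$-direction, so the difference is $e^{-x-b}\bigl|\tfrac{p^2}{2}-\la\bigr|$ with $\la=\tfrac{p_k^2}{2}$, $p_k:=k_3D\de^2e^{\frac{r_\de}{2}(k_1+k_2)}$ (this is the value used in the paper's kernel computation; the $\de^3$ normalization of Definition \ref{sisstardef} that you copied is inconsistent with it). ``Small variation of $(x,p)$ within the cell'' is false: $x$ runs over an interval of length $r_\de\to\iy$, so $e^{-x}$ varies by a factor $e^{r_\de}$ across a single cell. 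The actual mechanism is the factorization
\[
e^{-x-b}\Bigl|\tfrac{p^2}{2}-\tfrac{p_k^2}{2}\Bigr|=\tfrac12\bigl(e^{-\frac{x+b}{2}}|p-p_k|\bigr)\bigl(e^{-\frac{x+b}{2}}|p+p_k|\bigr)\le\tfrac12\,D\de^2\bigl(2\rh^{1/2}+D\de^2\bigr),
\]
where the first factor uses the cell $p$-width together with $e^{-\frac{x+b}{2}}e^{\frac{r_\de}{2}(k_1+k_2)}\le1$, and the second uses the compact support of $\wh F^\h$ in the $\h^*$-variable --- precisely the condition you reserve for the $R^*$-term. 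Once these two corrections are made (keeping the $S^*$-term $e^{-x}\le\de^6$ as you have it), your outline reduces to the paper's proof.
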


\begin{proof}
Let $ F\in L^1_c $. Then there exists $ \rh>0 $ such that
\begin{eqnarray*}
\hat F^{\h}(s-x,t-b,e^{\frac{-b}{2}}(v-p),(\frac{e^{-x-b}p^2}2)
\ve Q^*+(e^{-\frac{2x+b}{2}}p)\ve R^*+e^{-x}\ve S^*)=0
\end{eqnarray*}
if $ e^{-x-b}p^2>\rh $. For $ \de $ small enough, from Proposition \ref{kmulsS} we have that  $ P_{\de,D, 3,k}\circ
\si_{\ve S^*}(F)\circ M_{\de,D,3,k} =\si_{\ve S^*}(F)\circ M_{\de,D, 3,k}, k\in\Z.$ Therefore, we must show that
\begin{eqnarray*}
  \noop{P_{\de,D,3,k}\circ \si_{\ve S^*}(F)\circ M_{\de,D,3,k}- P_{\de,D,3,k}\circ \si
_{{k_3^2\de^4D^2e^{r_\de(k_1+k_2)}}\ve Q^*}(F)\circ M_{\de,D, 3,k}}
\leq E\de,k\in\Z,
\end{eqnarray*}
for some constant $ E>0 $ which is independent of $ \de $ and $ k $.

The kernel function $ F_{\de,D,3,k} $ of the operator $ P_{\de,D,3,k}\circ
\si_{\ve S^*}(F)\circ M_{\de,D,3, k}- P_{\de, D,3,k}\circ \si_{(\frac{k_3^2 \de^4 D^2e^{r_\de(k_1+k_2})}{2})\ve Q^*}(F)\circ M_{\de,D,3,k}$ is given by
\begin{eqnarray*}
  F_{\de,D,3,k}((s,t,v),(x,b,p)) &= &
\left(\wh F^{\h}(s-x,t-b,e^{\frac{-b}{2}}(v-p),(\frac{e^{-x-b}p^2}2)
\ve Q^*+(e^{-\frac{2x+b}{2}}p)\ve R^*+e^{-x}\ve S^*)\right.\\
&&-\left.\wh F^{\h}(s-x,t-b,e^{\frac{-b}{2}}(v-p),e^{-x-b}\frac{k_3^2 \de^4 D^2e^{ r_\de(k_1+k_2)}}{2}\ve Q^*)\right)\\
&& \, e^{-\frac{b}{2}}1_{S_{\de,D,3,k}}(x,b,p)1_{R_{\de,D,3,k}}(s,t,v).
\end{eqnarray*}
Again this gives us the estimate
\begin{eqnarray*}
  \vert F_{\de,D,3,k}((s,t,v),(x,b,p)) \vert &\leq&
\va(s-x,t-b,e^{\frac{-b}{2}}(v-p))e^{-\frac{b}{2}} \\
&&(\vert
e^{\frac{-x-b}{2}}(p-\de^2k_3De^{{r_\de(k_1+k_2)/2}})\vert \vert
e^{\frac{-x-b}{2}}(p+\de^2k_3De^{{r_\de(k_1+k_2)/2}}) \vert\\
& & \quad + e^{\frac{-2x-b}{2}}\val p+{e^{-x}}),
\end{eqnarray*}
where $\va $ is as in Remark \ref{remark_pre}.
On the sets $ S_{\de,D,3,k} $, if  $ \wh F^\h(-)\ne0 $ for $\de$ small enough, we have the relations:
\begin{eqnarray*}
e^{-x}&\leq &\de^6, \\
e^{-x-b}p^2 &\leq& \rh, \\
e^{1/2(-x-b)}\vert k_3\vert \de^2De^{1/2 r_\de(k_1+k_2)}&\leq&e^{1/2(-x-b)}(\vert p\vert+D\de^2 e^{1/2r_\de (k_1+k_2)})\\
&\leq &\rh^{1/2}+e^{1/2(-x-b)}D\de^2 e^{1/2r_\de (k_1+k_2)})\\
&<&\rh^{1/2}+\de\leq 2\rh^{1/2},\\
\vert e^{1/2(-x-b)}(p-k_3 D\de^2e^{1/2 r_\de(k_1+k_2)}\vert
& \leq &D\de^2e^{(-x-b)/2}e^{1/2 r_\de(k_1+k_2)}\\
&<&D\de^{3/2}.
\end{eqnarray*}
Therefore,
\begin{eqnarray*}
  \vert F_{\de,D,3,k}((s,t,v),(x,b,p)) \vert &\leq&
\va(s-x,t-b,e^{\frac{-b}{2}}(v-p))e^{-\frac{b}{2}}(2\rh^{1/2}D \de^{3/2}+
\rh^{{1/2}}\de^3+\de^6)\\
&\leq&\de\va(s-x,t-b,e^{\frac{-b}{2}}(v-p))e^{-\frac{b}{2}}.
\end{eqnarray*}
\end{proof}

Therefore we can conclude as in Subsection \ref{vempe}.

\begin{corollary}\label{zerobos}
Let $ a\in C^*(G) $.Then
\begin{eqnarray*}
  \lim_{\de\to 0}\dis(\rh_{\ve S^*}(\si_{\ve S^*}(a)-\si_{\ve,\de,D}(a)),C_0(\R,\K))= 0.
\end{eqnarray*}
\end{corollary}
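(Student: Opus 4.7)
The plan is to reduce the corollary to the preceding three propositions by means of a simple algebraic decomposition. I write
\begin{eqnarray*}
\si_{\ve S^*}(a)-\si_{\ve,\de,D}(a)
&=& \si_{\ve S^*}(a)\circ M_{\de,1}
+ \Big(\si_{\ve S^*}(a)\circ(I-M_{\de,1})-\si_{\ve,\de,D}(a)\Big).
\end{eqnarray*}
The first step is to show that after applying the isomorphism $\rh_{\ve S^*}$ the first summand already lies in $C_0(\R,\K)$, and that the second summand tends to $0$ in operator norm as $\de\to 0$; the sum of these two facts yields the claim.

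For the first summand, I use Proposition \ref{eqnm}, which says that $U_{\ve S^*}\circ M_{\de,1}=\ol M_{\de,1}\circ U_{\ve S^*}$. Combined with the defining identity \eqref{rhyuit}, this gives
\begin{eqnarray*}
\rh_{\ve S^*}\big(\si_{\ve S^*}(a)\circ M_{\de,1}\big)
= \ta_{\ve S^*}(a)\circ \ol M_{\de,1}
= \int_\R^\oplus \pi_{b^*B^*+\ve S^*}(a)\circ \ol M_{\de,1}\,db^*.
\end{eqnarray*}
By Proposition \ref{Rdemulcom}, the right hand side is an element of $C_0(\R,\K)$ for every fixed $\de>0$, so this summand contributes nothing to the distance to $C_0(\R,\K)$.

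For the second summand I first handle $a=F\in L^1_c$: Proposition \ref{mde2issmq} states exactly that
$$
\lim_{\de\to 0}\noop{\si_{\ve S^*}(F)\circ(I-M_{\de,1})-\si_{\ve,\de,D}(F)}=0,
$$
and since $\rh_{\ve S^*}$ is a $*$-isomorphism (hence isometric), the image of this operator also goes to $0$ in norm. Combining the two summands gives the corollary for $F\in L^1_c$.

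To pass to a general $a\in C^*(G)$, I use the density of $L^1_c$ in $C^*(G)$ together with the uniform norm bound on the family $(\si_{\ve,\de,D})_{\de>0}$ provided by Proposition \ref{boundedsetw}, and the obvious bound $\noop{\si_{\ve S^*}(a)}\leq \no a_{C^*(G)}$. Given $\eta>0$, I choose $F\in L^1_c$ with $\no{a-F}_{C^*(G)}<\eta$; then
\begin{eqnarray*}
&&\dis\big(\rh_{\ve S^*}(\si_{\ve S^*}(a)-\si_{\ve,\de,D}(a)),C_0(\R,\K)\big)\\
&\leq& \dis\big(\rh_{\ve S^*}(\si_{\ve S^*}(F)-\si_{\ve,\de,D}(F)),C_0(\R,\K)\big)
+ (1+3^3)\,\eta,
\end{eqnarray*}
so letting $\de\to 0$ first and then $\eta\to 0$ yields the conclusion. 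The only step with any content is the verification that Proposition \ref{Rdemulcom} really produces an element of $C_0(\R,\K)$ and not merely of $C(\R,\K)$; this has already been done there through the partial-integration estimate in $b^*$, so no new difficulty arises. The argument is essentially a bookkeeping exercise assembling the three preceding propositions.
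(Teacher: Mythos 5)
Your argument is correct and is essentially the paper's own proof: both use the identity $\rh_{\ve S^*}(\si_{\ve S^*}(a)\circ M_{\de,1})=\ta_{\ve S^*}(a)\circ \ol M_{\de,1}\in C_0(\R,\K)$ (Propositions \ref{eqnm} and \ref{Rdemulcom}) as the $C_0(\R,\K)$-approximant, and then bound the distance by $\noop{\si_{\ve S^*}(a)\circ(1-M_{\de,1})-\si_{\ve,\de,D}(a)}$, which tends to $0$ by Proposition \ref{mde2issmq}. Your explicit density step from $L^1_c$ to general $a\in C^*(G)$ via the uniform bound of Proposition \ref{boundedsetw} merely spells out what the paper leaves implicit.
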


\begin{proof}
Indeed, by Proposition \ref{mde2issmq}, we know that $ \lim_{\de\to
0}\noop{ \si_{\ve S^*}(a)\circ (1-M_{\de,1})-\si_{\ve,\de,D}(a)}=0 $ for any $ a\in
C^*(G) $. By Proposition  \ref{Rdemulcom}, we know that $ \ta_{\ve S^*}(a)\circ \ol M_{\de,1}\in C_0(\R,\K) $. Hence, by Proposition \ref{eqnm} and relation \eqref{rhyuit}:
\begin{eqnarray*}
&&\dis(\rh_{\ve S^*}(\si_{S^*}(a)-\si_{\ve,\de,D}(a))),C_0(\R,\K))\\
&\leq&\noop{\rh_{\ve S^*}(\si_{S^*}(a)-\si_{\ve,\de,D}(a))- \ta_{\ve S^*}(a)\circ \ol M_{\de,1}}
\\
&=&\noop{\rh_{\ve S^*}\Big(\si_{S^*}(a)\circ M_{\de,1}+\si_{S^*}(a)\circ(1- M_{\de,1})-\si_{\ve,\de,D}(a)\Big)- \ta_{\ve S^*}(a)\circ \ol M_{\de,1})}
\\
&=&\noop{\si_{\ve S^*}(a)\circ(1- M_{\de,1})-\si_{\ve,\de,D}(a)}\\
&\to& 0 \quad \text{ as } \quad \de \to 0.
\end{eqnarray*}
\end{proof}

\subsection{The boundary condition for $ \R_{\ve R^*} $}\label{cora}

This case is similar  to the preceding one (and easier). We shall omit most of the proofs.

We take the coordinates on $ G $ coming from the basis $ \{X:=A+B,Y:=A-B,P,Q,R,S\} $.
Then
 \begin{eqnarray*}
[X,P] =0, [Y,P] =P, [X,Q] =Q, [Y,Q] =-Q, [X,R] =R, [Y,R] =0.
\end{eqnarray*}
Hence for $ (x,y,p):=\exp(x X)\exp(y Y)\exp(pP)\in G_0 $, we have
\begin{eqnarray*}
(x,y,p)\cdot \ve R^*\res{\h}=-{e^{-x+y}p}\ve Q^*+e^{-x}\ve R^*.
\end{eqnarray*}
The irreducible representation $ \pi_{\ve R^*+b^* B^*} $ is realized as $ \pi_{\ve R^*+b^*B^*}:=\ind_{L}^G \ch_{\ve R^*+b^* B^*}$, where $ L:=\exp(\text{span}\{Y,\h\}) $.
Let $ \partial \R_{\ve R^*} $ be the boundary of $\R_{\ve R^*}$ in $ \g^*/G $.
It follows from the description of the coadjoint orbits (see (\ref{bdry-4d-2})) that
\begin{eqnarray*}
 \partial \R_{\ve R^*} = (\{S,R\}^\perp)/G.
\end{eqnarray*}

\begin{definition}\label{sisstdef}
Let
\rm   \begin{eqnarray*}
\si_{\ve R^*}&:=&\ind_ H ^G \ch_{\ve R^*}.
\end{eqnarray*}
The kernel function $\K_F $ of the operator $\si_{\ve R^*}(F) $ is then given by
\begin{eqnarray*}
 \K_F(s,t,v;x,y,p)=\wh F^{\h}(s-x,t-y,e^{y}(v-
p),-{e^{-x+y}p} \ve Q^*+e^{-x}\ve R^*)e^y.
 \end{eqnarray*}

This representation is equivalent to the direct integral representation
\begin{eqnarray*}
  \ta_{\ve R^*}:=\int_\R ^\oplus\pi_{b^* B^* +\ve R^*}db^*
\end{eqnarray*}
acting on the Hilbert space
\begin{eqnarray*}
\H_{\ta_{\ve R^*}}=\int_\R^\oplus  L^2(G/L,\ch_{b^*B^*+\ve
R^*})db^*\simeq \int_\R^\oplus  L^2(\R^2)db^*
\end{eqnarray*}
with the norm
\begin{eqnarray*}
\no\xi^2_2=\int_\R  \no{\xi(b^*)}_2^2 db^* \quad \text{for} \quad \xi\in \H_{\ta_{\ve R^*}}.
\end{eqnarray*}
An intertwining operator $ U_{\ve R^*} $ for this equivalence is given by
\begin{eqnarray*}
  U_{\ve R^*}(\xi)(b^*)(g):= \int_\R \xi(g\exp(y
Y))e^{-\frac{1}{2}b}e^{{-2 \pi i b^*y}}dy, \, \xi\in L^2(G/H, \ch_{\ve
R^*}), g\in G, b^*\in\R.
\end{eqnarray*}
%Let $  C(\R,\B)  $ be the $ C^* $-algebra of all continuous uniformly bounded  mappings
%from $ \R $ into the algebra $ \B $ of bounded operators on the Hilbert
%space $ L^2(\R) $, containing the ideal $
%C_0(\R,\K) $ of all continuous mappings from $ \R $ into the algebra of
%compact operators on  $ L^2(\R) $ vanishing at infinity.
Similar to Subsection \ref{vesbs}, we can again consider the C*-algebra $ C(\R,\B) $ and see that the image of $ \ta_{\ve
R^*}$ is contained in $ C(\R,\B) $. Similarly, we can also consider $ C(\R,\B) $ as a subalgebra of $
B(\H_{\ta_{\ve R^*}}) $: for $ \ph\in C(\R,\B) $, let
\begin{eqnarray*}
\ph(\xi)(b^*):=\ph(b^*)(\xi(b^*)) \text{ for }\xi\in\H_{\ta_{\ve
R^*}}, b^*\in\R.
\end{eqnarray*}
The unitary mapping $ U_{\ve R^*} $ induces a canonical homomorphism $ \rh_{\ve R^*} $
from the algebra $  B(L^2(G/H,\ch_{\ve R^*})) $
onto $ B(\H_{\ta_\ve R^*}) $. This homomorphism is defined on $ \si_{\ve R^*}(a)$ by
\begin{eqnarray}\label{rhyuit}
  \rh_{\ve R^*}(\si_{\ve R^*}(a)) &= & U_{\ve R^*}\circ \si_{\ve R^*}(a)\circ U_{\ve R^*}^*\\
\nn&=&\int_\R^\oplus \pi_{b^*B^*+\ve R^*}(a)db^*\\
\nn&=&\ta_{\ve R^*}(a) \quad \text{for} \quad a\in C^*(G).
\end{eqnarray}
\end{definition}

\begin{definition}\label{sisdefr}
\rm
\begin{enumerate}\label{}
\item
Let
$ S_{\de,1}:=
\{(x,y,p); e^{-x}>\de^6\} $.

\item  Let $ \de \mapsto r_\de\in \R^+ $ be such that $ \lim_{\de\to 0}r_\de=+\iy
$ and $ \lim_{\de\to 0}e^{r_\de} \de^{{1/2}}=0 $.
\item For a constant $ D>0 $ and $ k=(k_1,k_2,k_3)\in \Z^3 $, let
\begin{eqnarray*}
&& S_{\de,D,2,k,}:= \{(x,y,p)\in\R^3; e^{-x}\leq\de^6,
 x\in I_{r_\de,k_1}, y\in I_{r_\de,k_2}, p\in
I_{{D\de^2}{e^{r_\de(k_1-k_2)}},k_3}\}.
\end{eqnarray*}

\end{enumerate}

\begin{proposition}\label{kmulsR}
For every compact subset $ K\subseteq \R^3 $, we have that
\begin{eqnarray*}
  KS_{\de,D,2,k}\subset
\bigcup_{ j\in \Z^3\atop  \val {j_i}\leq 1,i=1,2,3}S_{\de,
De^{r_\de(-j_1-j_2)}, 2,k+j}=:  R_{\de, D,2,k}.
 \end{eqnarray*}
for every $ k\in\Z^3 $ and $ \de>0 $ small enough.
 \end{proposition}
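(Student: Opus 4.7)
The proof runs parallel to that of Proposition \ref{kmulsS}, with the group multiplication adapted to the new coordinates on $G_0$. Using the parameterization $E(x, y, p) = \exp(xX)\exp(yY)\exp(pP)$ together with $[X, Y] = [X, P] = 0$ and $\mathrm{Ad}(\exp(yY))\, P = e^{y} P$ (from $[Y, P] = P$), a direct computation gives
\begin{equation*}
E(s, t, v) \cdot E(x, y, p) \;=\; E\bigl(s + x,\; t + y,\; e^{-y} v + p\bigr),
\end{equation*}
which is consistent with the kernel form $\widehat{F}^{\mathfrak h}(s-x, t-y, e^{y}(v-p), \ldots)$ recorded in Definition \ref{sisstdef}.

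Fix $M>0$ with $K \subset [-M, M]^3$ and take $\delta$ small enough that $r_\delta > M$. For $(s, t, v) \in K$ and $(x, y, p) \in S_{\delta, D, 2, k}$, the sum $s + x$ lies in $[k_1 r_\delta - M,\, (k_1 + 1) r_\delta + M)$ and hence in $I_{r_\delta, k_1 + j_1}$ for some $j_1 \in \{-1, 0, 1\}$; similarly $t + y \in I_{r_\delta, k_2 + j_2}$ for some $j_2 \in \{-1, 0, 1\}$. The cutoff $e^{-(s+x)} \leq \delta^{6}$ is automatic for $\delta$ small, since $s + x \geq x - M$ and $e^{-x} \leq \delta^{6}$. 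It remains to place $e^{-y} v + p$ into the $p$-interval of $S_{\delta, D e^{r_\delta(-j_1 - j_2)}, 2, k + j}$ for a suitable $j_3 \in \{-1, 0, 1\}$.

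The core estimate controls the perturbation $|e^{-y} v|$ against the relevant $p$-scales. Using $|v| \leq M$, $y \geq k_2 r_\delta$, and the crucial bound $e^{- k_1 r_\delta} \leq e^{r_\delta} \delta^{6}$ (which follows from $x < (k_1+1) r_\delta$ together with $e^{-x} \leq \delta^{6}$), one obtains
\begin{equation*}
\frac{|e^{-y} v|}{D \delta^{2} e^{r_\delta (k_1 - k_2)}} \;\leq\; \frac{M}{D}\, \delta^{-2}\, e^{- k_1 r_\delta} \;\leq\; \frac{M}{D}\, e^{r_\delta} \delta^{4},
\end{equation*}
which tends to $0$ as $\delta \to 0$ by the hypothesis $e^{r_\delta} \delta^{1/2} \to 0$. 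For each $(j_1, j_2)$ the target scale $D e^{r_\delta(-j_1 - j_2)} \delta^{2} e^{r_\delta((k_1+j_1)-(k_2+j_2))}$ differs from the source scale $D \delta^{2} e^{r_\delta(k_1-k_2)}$ only by a factor of the form $e^{O(r_\delta)}$, so the same argument (with worse but still vanishing constants) yields negligibility against every target scale.

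Consequently $e^{-y} v + p$ lies within a neighborhood of $p \in I_{D \delta^{2} e^{r_\delta(k_1-k_2)}, k_3}$ that is much smaller than any relevant interval length, so it belongs to $I_{D e^{r_\delta(-j_1-j_2)} \delta^{2} e^{r_\delta((k_1+j_1)-(k_2+j_2))},\, k_3 + j_3}$ for some $j_3 \in \{-1, 0, 1\}$. This places $(s+x, t+y, e^{-y} v + p)$ in $S_{\delta, D e^{r_\delta(-j_1-j_2)}, 2, k + j}$, completing the inclusion. The main technical obstacle is exactly this uniformity step: ensuring that the single bound on $|e^{-y} v|$ remains negligible against all $3^{3}$ target scales simultaneously. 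This reduces entirely to the interplay between the cutoff $e^{-x} \leq \delta^{6}$ and the prescribed growth of $r_\delta$, exactly as in Proposition \ref{kmulsS}.
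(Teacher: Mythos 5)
Your computation of the group law $E(s,t,v)\cdot E(x,y,p)=E(s+x,\,t+y,\,e^{-y}v+p)$, the shift of the first two indices by at most one when $r_\de>M$, and the core estimate $\vert e^{-y}v\vert \le M e^{-k_2 r_\de}$, hence $\vert e^{-y}v\vert/(D\de^2e^{r_\de(k_1-k_2)})\le (M/D)\de^{-2}e^{-k_1r_\de}\le (M/D)e^{r_\de}\de^4\to 0$, are exactly the paper's argument (the paper phrases it as $\vert e^{-y}v\vert\le\vert e^{-x}v\vert e^{x-y}<D\de^2e^{r_\de(k_1-k_2)}$ using $M\de^6e^{2r_\de}<D\de^2$). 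Up to that point your proof and the paper's coincide.

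The gap is in your final ``uniformity'' paragraph. Once $j_1,j_2$ are forced by where $s+x$ and $t+y$ land, the only freedom left is $j_3$, and you must show $e^{-y}v+p\in I_{c',k_3+j_3}$ for some $\vert j_3\vert\le 1$, where $c'$ is the $p$-scale of the target set. Smallness of the perturbation \emph{relative to} $c'$ does not give this when $c'$ differs from the source scale $c=D\de^2e^{r_\de(k_1-k_2)}$: the intervals $I_{c',k_3-1},I_{c',k_3},I_{c',k_3+1}$ cover a neighbourhood of $k_3c'$, not of $k_3c$, and with the constant $De^{r_\de(-j_1-j_2)}$ as printed one has $c'=c\,e^{-2j_2r_\de}$, so for $j_2=\pm1$ and $\vert k_3\vert$ large the point $p+e^{-y}v\approx k_3c$ lies nowhere near $[(k_3-1)c',(k_3+2)c')$; your ``worse but still vanishing constants'' argument cannot repair this, because it is an alignment problem, not a size problem. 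The paper's own proof avoids it by taking the correction $De^{r_\de(-j_1+j_2)}$ (the exponent $-j_1-j_2$ in the statement is evidently a slip copied from Proposition \ref{kmulsS}, where the scale depends on $k_1+k_2$ rather than $k_1-k_2$): then the target length equals the source length $D\de^2e^{r_\de(k_1-k_2)}$ exactly, and ``perturbation smaller than one interval length'' shifts $k_3$ by at most one, which closes the proof. So you should prove the matched-scale version rather than argue around the mismatched one. A smaller remark of the same flavour: the cutoff $e^{-(s+x)}\le\de^6$ is not literally ``automatic'' (you only get $e^{-(s+x)}\le e^M\de^6$); the paper passes over this silently and it is harmless for the intended application, but as stated your claim overshoots.
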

\begin{proof}
Indeed, we have an $ M>0 $ such that $ K\subset [-M, M]^3$ and then for $
r_\de> M $, $ (s,t,v)\in K$ and $(x,y,p)\in S_{\de,D,2,k}$. It follows that
\begin{eqnarray*}
 u:=(s,t,v)\cdot (x,y,p) = (s+x,t+y, e^{-y}v+p),
\end{eqnarray*}
$ (k_1+j_1)r_\de\leq s+x<(k_1+j_1+1)r_\de$ and $ (k_1+j_1)r_\de\leq
t+y<(k_2+j_2+1)r_\de $ for some $k=(k_1, k_2)\in \Z^2$ and $ j_1,j_2\in \{-1,0,1\}$. It follows that
\begin{eqnarray*}
  \vert e^{-y}v\vert &\leq &\vert e^{-x}v\vert e^{x-y}\\
&<&
De^{r_\de(-j_1+j_2)}\de^2{e^{r_\de(k_1-k_2+j_1-j_2)} },
\end{eqnarray*}
since for $\de $ small enough  $ M\de^6 e^{2r_\de}< D\de^2$. Hence:
\begin{eqnarray*}
 p+e^{-y}v&<&
{(k_3+1)De^{r_\de(-j_1+j_2)}\de^2}
{e^{r_\de(k_1-k_2+j_1-j_2)} }+e^{-y}v\\
&<&{(k_3+2)De^{r_\de(-j_1+j_2)}\de^2}
{e^{r_\de(k_1-k_2+j_1-j_2)} }
\end{eqnarray*}
and also
\begin{eqnarray*}
 p+e^{-y}v&\geq&
{k_3De^{r_\de(-j_1+j_2)}\de^2}
{e^{r_\de(k_1-k_2+j_1-j_2)} }-e^{-y}\vert v\vert \\
&\geq &{r_\de(k_3-1)De^{(-j_1+j_2)}\de^2}
{e^{r_\de(k_1-k_2+j_1-j_2)} }.
\end{eqnarray*}
Hence $ u$ is contained in the set $R_{\de, D, 2, k}$.
\end{proof}

%\begin{definition}\label{sdefr}
Define the corresponding multiplication operators on $L^2(\R^2)$ by $M_{\de,1}:=M_{S_{\de,1}}$, $M_{\de,D,k,2}:=M_{S_{\de,D, k,2}}$ and $P_{\de,D,2,k}:=M_{R_{\de,D,2,k}}$ for all $k\in \Z^3$.
%\begin{eqnarray*}
% M_{\de,1}:=M_{S_{\de,1}}, \quad  M_{\de,D,k,2}:=M_{S_{\de,D, k,2}}, \\
%  P_{\de,D,2,k}:=M_{R_{\de,D,2,k}} \quad \text{ for } k\in \Z^3 .
% \end{eqnarray*}
%\end{definition}

\begin{proposition}\label{eqnmr}
We have that
\begin{eqnarray*}
 \ol M_{\de,1}\circ U_{\ve R^*} = U_{\ve R^*}\circ  M_{\de,1} \quad \text{for} \quad \de>0,
\end{eqnarray*}
where $\ol M_{\de,1}:=M_{\{(x,p);e^{-x}>\de^6\}} $.
\end{proposition}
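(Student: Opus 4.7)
The proof will mirror the argument used for Proposition \ref{eqnm} on the subset $\R_{\ve S^*}$. The essential observation is that the multiplication operator $M_{\de,1}$ acts by the characteristic function of $S_{\de,1}=\{(x,y,p):e^{-x}>\de^6\}$, which depends only on the coordinate $x$, while the intertwining operator $U_{\ve R^*}$ is built from an integration in the variable $y$ (essentially a weighted Fourier transform in $y$). Since the cutoff $\{e^{-x}>\de^6\}$ is independent of $y$, the indicator function can be pulled out of the $y$-integral.

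Concretely, for $\xi\in L^2(G/H,\ch_{\ve R^*})$, $b^*\in\R$ and a representative $E(x,p)=\exp(xX)\exp(pP)$ of a point of $G/L$, I plan to write
\begin{eqnarray*}
U_{\ve R^*}(M_{\de,1}\xi)(b^*)(E(x,p))
&=&\int_\R M_{\de,1}(\xi)(E(x,p)\exp(yY))\,e^{-y/2}e^{-2\pi i b^*y}dy\\
&=&\int_\R 1_{\{e^{-x}>\de^6\}}\,\xi(E(x,p)\exp(yY))\,e^{-y/2}e^{-2\pi i b^*y}dy\\
&=&1_{\{e^{-x}>\de^6\}}\int_\R \xi(E(x,p)\exp(yY))\,e^{-y/2}e^{-2\pi i b^*y}dy\\
&=&\ol M_{\de,1}(U_{\ve R^*}\xi)(b^*)(E(x,p)),
\end{eqnarray*}
where in the third equality the indicator is constant with respect to the integration variable $y$ and therefore factors out. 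Applied to every $b^*$, this gives the desired operator identity $\ol M_{\de,1}\circ U_{\ve R^*}=U_{\ve R^*}\circ M_{\de,1}$.

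There is no genuine obstacle; the argument is a direct verification completely analogous to Proposition \ref{eqnm}. The only care needed is bookkeeping: one must keep track of the correct modular weight coming from the choice of Haar measure used to define $U_{\ve R^*}$, and one must confirm that the description of $G/L$ in terms of the remaining coordinates $(x,p)$ (after the $y$-integration which corresponds to moving along $\exp(\R Y)\subset L$) matches the domain $\R^2$ on which $\ol M_{\de,1}$ is defined, so that the cutoff $\{e^{-x}>\de^6\}$ is naturally an operator on the fiber Hilbert space $L^2(\R^2)$ indexed by $b^*$.
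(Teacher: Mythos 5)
Your proposal is correct and coincides with the paper's own argument: since the cutoff set $S_{\de,1}$ depends only on the coordinate $x$, the indicator $1_{\{e^{-x}>\de^6\}}$ factors out of the $y$-integral defining $U_{\ve R^*}$, which immediately yields $\ol M_{\de,1}\circ U_{\ve R^*}=U_{\ve R^*}\circ M_{\de,1}$. Nothing further is needed.
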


\begin{proof}
Indeed, for $ \xi\in L^2(G/H,\ch_{ \ve R^*}), y\in \R$ and $b\in \R$, we have that
\begin{eqnarray*}
U_{\ve R^*}(M_{\de,1}(\xi))(E(x,p)) &= & \int_\R M_{\de,1}( \xi)(E(x,p)\exp(y Y))e^{-\frac{1}{2}y}e^{{-2\pi i b^*y}}dy\\
&=&\int_\R 1_{\{e^{-x}>\de^6\}}\xi (E(x,p)\exp(y Y))e^{-\frac{1}{2}y}e^{{-2\pi i b^*y}}dy\\
&=&1_{\{e^{-x}>\de^6\}}(E(x,p))\int_\R \xi(E(x,p)\exp(y Y))e^{-\frac{1}{2}y}e^{{-2\pi i b^*y}}dy\\
&=&\ol M_{\de,1}(U_{\ve R^*}\xi)(E(x,p)).
\end{eqnarray*}
\end{proof}

\begin{definition}\label{sidefr}
\rm
For $ D>0 $ and $ F\in L^1_c $, let
\begin{eqnarray*}
\si_{\ve,\de,D,2}(F):=\sum_{k\in\Z^3} P_{\de,D,2,k}\circ \si_{\ve \de^2 k_3
De^{r_\de(k_1-k_2)}Q^*}(F)\circ M_{\de,D,2,k}.
\end{eqnarray*}
\end{definition}

The kernel functions $ F_{\ve,\de,D,2}$ of this operator is given by :
\begin{eqnarray*}\label{defveve}
F_{\ve,\de,D,2}((s,t,v),(x,y,p))
 &=&\sum_{k\in\Z^3}F_{\ve,\de,D,2,k}((s,t,v),(x,y,p))\\
 &:=&
\sum_{k\in\Z^3}\wh F^{\h}(s-x,t-y, e^{-y}(v-p), e^{-x+y}{\de^2k_3 D
e^{r_\de(k_1-k_2)}}\ve Q^*)\\
&&e^{-y}1_{R_{\de,D,2,k}}(s,t,v)1_{S_{\de,D,2,k}}(x,y,p).
\end{eqnarray*}

\begin{proposition}\label{estas usalR}
For every $ F\in L^1_c $ and every small enough $ \de>0$, there exists a constant $ E>0 $ such that
\begin{eqnarray*}
\noop{\si_{\ve,\de,D,2,k}(F)}\leq E \noop{\si_{\ve Q^*}(F)}.
\end{eqnarray*}
\end{proposition}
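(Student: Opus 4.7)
The plan is to mirror the strategy used for Proposition \ref{boundedsetw}, since the setup for $\R_{\ve R^*}$ is structurally identical to that of $\R_{\ve S^*}$: one has a family of diagonal-type operators indexed by $k\in\Z^3$, each built from a representation on the $G$-orbit of $\ve Q^*$ and cut off on a disjoint ``box'' $S_{\de,D,2,k}$, followed by a projection onto the slightly enlarged box $R_{\de,D,2,k}$. The target inequality should read
\begin{equation*}
\noop{\si_{\ve,\de,D,2}(F)}\leq 3^3\noop{\si_{\ve Q^*}(F)}
\end{equation*}
(so $E=27$ is independent of $F$ and of small $\de$), and I will treat $\si_{\ve,\de,D,2,k}$ in the statement as a typo for $\si_{\ve,\de,D,2}$ from Definition \ref{sidefr}.

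First I would handle the individual summands. The linear functional $\ve\de^2 k_3 De^{r_\de(k_1-k_2)}Q^*$ lies on the same $G$-orbit as $\ve Q^*$ whenever it is nonzero (dilations along $B$ act by $e^{-b}$ on the $Q^*$-component, exactly as in \eqref{sikde1}), and in the trivial case $k_3=0$ one has a character representation bounded by $\no F_1$, which is also dominated by $\noop{\si_{\ve Q^*}(F)}$ via the left regular representation. In either case,
\begin{equation*}
\noop{\si_{\ve\de^2 k_3 De^{r_\de(k_1-k_2)}Q^*}(F)\circ M_{\de,D,2,k}}\leq \noop{\si_{\ve Q^*}(F)}
\end{equation*}
for every $k\in\Z^3$, uniformly in $\de$.

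Next I would assemble the sum. By Proposition \ref{kmulsR}, the support set $R_{\de,D,2,k}$ is contained in the union $\bigcup_{|j_i|\leq 1}S_{\de,De^{r_\de(-j_1+j_2)},2,k+j}$, a union over at most $3^3=27$ translates indexed by $j\in\{-1,0,1\}^3$. I would split each $R_{\de,D,2,k}$ into $27$ disjoint measurable pieces $R_{k,j}\subset S_{\de,De^{r_\de(-j_1+j_2)},2,k+j}$, and set $P_{\de,D,2,k,j}:=M_{R_{k,j}}$, so that $P_{\de,D,2,k}=\sum_{j}P_{\de,D,2,k,j}$. For each fixed $j$, the family $\{R_{k,j}\}_{k\in\Z^3}$ consists of pairwise disjoint sets (because the sets $S_{\de,De^{r_\de(-j_1+j_2)},2,k+j}$ are mutually disjoint in $k$); likewise $\{S_{\de,D,2,k}\}_{k\in\Z^3}$ are mutually disjoint. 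This places the rewritten sum
\begin{equation*}
\si_{\ve,\de,D,2}(F) \;=\; \sum_{j\in\{-1,0,1\}^3}\sum_{k\in\Z^3} P_{\de,D,2,k,j}\circ \si_{\ve\de^2 k_3 De^{r_\de(k_1-k_2)}Q^*}(F)\circ M_{\de,D,2,k}
\end{equation*}
exactly in the framework of Proposition \ref{sumsofop}, with $N=3^3$ and $C=\noop{\si_{\ve Q^*}(F)}$.

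Applying Proposition \ref{sumsofop} then yields $\noop{\si_{\ve,\de,D,2}(F)}\leq 3^3\noop{\si_{\ve Q^*}(F)}$, completing the argument. I do not expect any genuine obstacle: the only point requiring care is the bookkeeping that turns the covering in Proposition \ref{kmulsR} into a disjoint decomposition into $27$ families, each disjoint in $k$, so as to legitimately invoke Proposition \ref{sumsofop}; the dominating norm $\noop{\si_{\ve Q^*}(F)}$ is then inherited automatically from the orbit equivalence.
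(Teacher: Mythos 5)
Your proof is correct and is essentially the paper's own argument: the paper proves this proposition simply by saying it is similar to Proposition \ref{boundedsetw}, i.e. the orbit-equivalence bound for each summand combined with the decomposition of the sets $R_{\de,D,2,k}$ into at most $3^3$ families that are disjoint in $k$, fed into Proposition \ref{sumsofop}, exactly as you spell out. Your aside on the $k_3=0$ terms is a point the paper glosses over, and your conclusion there is right, although the cleaner justification is that $0$ lies in the closure of the $G$-orbit of $\ve Q^*$ in $\h^*$, so $\ind_H^G\ch_0$ is weakly contained in $\si_{\ve Q^*}$, rather than an appeal to the left regular representation.
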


\begin{proof}
The proof is similar to the proof of Proposition \ref{boundedsetw}.
\end{proof}

\end{definition}

\begin{proposition}\label{Rdemulcomrver}
Let $ a\in C^*(G) $. Then the element $ \ta_{\ve R^*}(a)\circ \ol M_{\de,1}:=\int_\R^\oplus \pi_{b^*B^*+\ve R^*}(a)\circ \ol M_{\de,1}db^* $ is in $ C_0(\R,\K) $ for every $ \de>0 $.
 \end{proposition}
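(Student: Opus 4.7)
The plan is to mirror the two-step strategy used in Proposition \ref{Rdemulcom} for the $\varepsilon S^*$ case, exploiting the kernel formula \eqref{kerndftwo} together with the cut-off produced by $\bar M_{\delta,1}$. It suffices to verify the statement on the dense subspace $L^1_c$ of $C^*(G)$: show that, for any $F\in L^1_c$, the field $b^*\mapsto \pi_{b^*B^*+\varepsilon R^*}(F)\circ \bar M_{\delta,1}$ is norm-continuous (already granted by Theorem \ref{contonR}), takes values in $\K(L^2(\mathbb R^2))$, and vanishes at infinity. Closedness of $C_0(\mathbb R,\K)$ inside $C(\mathbb R,\mathcal B)$ then extends the conclusion to every $a\in C^*(G)$ by a standard $\varepsilon/3$ argument.

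For fibrewise compactness, I would show the stronger statement that each $\pi_{b^*B^*+\varepsilon R^*}(F)\circ \bar M_{\delta,1}$ is Hilbert--Schmidt. Using \eqref{kerndftwo}, its integral kernel equals
\begin{eqnarray*}
\mathcal K_{F,b^*}(t,u;x,p)=1_{\{e^{-x}>\delta^6\}}\int_{\mathbb R}e^{ib^*y}e^{-y/2}\widehat{F^{\mathfrak h}}\bigl(E(t-x,u-e^y p)\exp yY\bigr)(-\varepsilon p e^{-x}Q^*+\varepsilon e^{-x}R^*)\,dy.
\end{eqnarray*}
Because $F\in L^1_c$, the function $\widehat{F^{\mathfrak h}}$ is smooth and compactly supported in both the group and dual variables; the compact support in the dual variable forces $e^{-x}\leq C$ whenever the integrand is non-zero, which combined with the lower bound $e^{-x}>\delta^6$ confines $x$ to a compact interval. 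The spatial support restricts $(t-x,u-e^y p,y)$ to a compact box, and after the change of variables $(t,u)\mapsto (t-x,u-e^y p)$ one bounds $\|\mathcal K_{F,b^*}\|_{L^2(\mathbb R^4)}$ by a finite constant depending on $\delta$ and $F$, which proves the Hilbert--Schmidt property.

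For decay at infinity, integrate by parts in $y$ against the oscillatory factor $e^{ib^*y}$: writing $\partial_y(e^{ib^*y})=ib^* e^{ib^*y}$ and absorbing the $e^{-y/2}$ factor, each integration yields a factor $1/(ib^*-\tfrac12)$ together with a first-order differential operator in $y$ applied to $\widehat{F^{\mathfrak h}}(E(t-x,u-e^y p)\exp yY)$. The derivative contains a term proportional to $e^y p$; on the support of the integrand this quantity is bounded, so the resulting kernel is still dominated by a compactly supported continuous function $\varphi(t-x,u-e^y p,y)$. Young's inequality then gives $\noop{\pi_{b^*B^*+\varepsilon R^*}(F)\circ\bar M_{\delta,1}}\leq C(F,\delta)/|b^*|$ for $|b^*|\geq 1$, so the operator norm tends to $0$ as $|b^*|\to\infty$.

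The main obstacle, as in the $\varepsilon S^*$ case, is the careful management of the derivative in the integration by parts: the $y$-dependence enters through $e^y$ in the second argument of $\widehat{F^{\mathfrak h}}$, and one must verify that the factor $e^y$ produced by $\partial_y$ is controlled by the compact support of $\widehat{F^{\mathfrak h}}$ in the group and dual variables (the dual-variable constraint $|e^{-x}p|\leq C$ combined with the restriction $e^{-x}>\delta^6$ keeps $|p|$ bounded, and the spatial constraint bounds $|e^y p|$, giving a uniform bound on the differentiated integrand). Once these bookkeeping estimates are done, the same extension-by-density argument as in Proposition \ref{Rdemulcom} concludes the proof.
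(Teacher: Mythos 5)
Your proposal is correct and follows essentially the same route as the paper's proof: restrict to the dense subspace $L^1_c$, show each fibre operator $\pi_{b^*B^*+\ve R^*}(F)\circ \ol M_{\de,1}$ is Hilbert--Schmidt (hence compact) using the compact support of $\wh F^{\h}$ together with the cutoff $e^{-x}>\de^6$, obtain the decay $O(1/\val{b^*})$ by integration by parts in $y$ against $e^{ib^*y}$ followed by a Young/Schur estimate, and extend to all of $C^*(G)$ by density and closedness of $C_0(\R,\K)$. The only cosmetic difference is that the paper controls the Hilbert--Schmidt norm via the generalized Minkowski inequality of Remark \ref{remark_pre} and a change of variables, while you argue by confining $(t,u,x,p,y)$ to a compact set; both verifications are routine and equivalent.
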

\begin{proof}
Let us prove  that the operators  $  \pi_{b^*B^*+\ve R^*}(a)\circ \ol M_{\de,1},\de>0 $, are all compact.
However, it is easy to see, using Remark \ref{remark_pre}, that for $ F\in L^1_c $:
\begin{eqnarray*}
 && \no{ \pi_{b^*B^*+\ve R^*}(F)\circ \ol M_{\de,1}}_{H-S}^2\\
 &=& \int_{\R^4}1_{\{e^{-x}>\de^6\}}
\val{ \int_{\mathbb R} e^{ib^*y}e^{-\frac y2}\widehat{F}^{\mathfrak h}
(E(t-x,u-e^yp)\exp yY)(-\varepsilon p e^{-x+y}Q^*+\varepsilon e^{-x}R^*)dy}^2dxdtdu dp\\
&\leq &(\int_\R(\int_{\R^4}1_{\{ e^{-x}>\de^6\}}
\vert \widehat{F}^{\mathfrak h}
(E(t-x,u-e^yp)\exp yY)(-\varepsilon p e^{-x+y}Q^*+\varepsilon e^{-x}R^*)
\vert^2dxdtdu dp)^{\frac{1}{2}}e^{-y/2}dy)^2\\
&=&(\int_\R(\int_{\R^4}1_{\{ e^{-x}>\de^6\}}
\vert  e^x\widehat{F}^{\mathfrak h}
(E(t,u)\exp yY)(-\varepsilon p Q^*+\varepsilon e^{-x}R^*)\vert^2
dxdtdu dp)^{\frac{1}{2}}e^{-y}dy)^2\\
&<&\iy.
\end{eqnarray*}
The proof that $ \lim_{b^*\to\iy}\pi_{b^*B^*+\ve R^*}(a)\circ \ol M_{\de,1}db^* =0 $ is similar to that of the corresponding proof of Proposition \ref{Rdemulcom}
\end{proof}

\begin{proposition}\label{mde2issmr}
For any $ F\in L^1_c $, there exists  a constant $ K=K(D,F)>0 $ such that
\begin{eqnarray*}
 \noop{\si_{\ve R^*}(F)\circ (1- M_{\de,1})-\si_{\ve,\de,D,2,k}(F)} \leq K\de
\quad \text{for} \quad \de>0.
\end{eqnarray*}
\end{proposition}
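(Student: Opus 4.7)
The strategy is to closely parallel the proof of Proposition \ref{mde2issmq} from the $\ve S^*$ case. Since $F\in L^1_c$, there exists $\rho>0$ such that $\wh F^\h(\cdot,\cdot,\cdot, qQ^*+rR^*+zS^*)$ vanishes whenever one of $|q|,|r|,|z|$ exceeds $\rho$. The measurable sets $\{S_{\de,D,2,k}\}_{k\in\Z^3}$ partition the support $\{(x,y,p):e^{-x}\leq\de^6\}$ of $1-M_{\de,1}$. Applying Proposition \ref{kmulsR} for $\de$ small (depending on $\rho$) to insert $P_{\de,D,2,k}$ on the left of $\si_{\ve R^*}(F)\circ M_{\de,D,2,k}$, the difference in question decomposes as
\begin{equation*}
\si_{\ve R^*}(F)\circ(1-M_{\de,1})-\si_{\ve,\de,D,2}(F)=\sum_{k\in\Z^3}P_{\de,D,2,k}\circ\bigl[\si_{\ve R^*}(F)-\si_{\ve \de^2 k_3 D e^{r_\de(k_1-k_2)} Q^*}(F)\bigr]\circ M_{\de,D,2,k}.
\end{equation*}

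An application of Proposition \ref{sumsofop}, in the same spirit as in Proposition \ref{boundedsetw}, then reduces the bound on the sum to a uniform-in-$k$ bound on each summand: the $M_{\de,D,2,k}$ have pairwise disjoint supports, and the $R_{\de,D,2,k}$ belong to at most $3^3$ families of mutually disjoint subsets of $\R^3$. Thus the remaining task is to find $E>0$ independent of $\de$ and $k$ such that
\begin{equation*}
\noop{P_{\de,D,2,k}\circ\bigl[\si_{\ve R^*}(F)-\si_{\ve \de^2 k_3 D e^{r_\de(k_1-k_2)} Q^*}(F)\bigr]\circ M_{\de,D,2,k}}\leq E\de.
\end{equation*}

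For this per-$k$ estimate, I would compute the kernel of $\si_{\ve R^*}(F)-\si_{\ve \de^2 k_3 D e^{r_\de(k_1-k_2)} Q^*}(F)$ from the formulas of Definition \ref{sisstdef}. Using the smoothness of $\wh F^\h$, the kernel is dominated by $\psi(s-x,t-y,e^{y}(v-p))\bigl(e^{-x+y}|p-k_3 D\de^2 e^{r_\de(k_1-k_2)}|+e^{-x}\bigr)e^{y}$ for some $\psi\in C_c(\R^3)$. On $S_{\de,D,2,k}$, the bounds $|p-k_3 D\de^2 e^{r_\de(k_1-k_2)}|\leq D\de^2 e^{r_\de(k_1-k_2)}$ and $e^{-x+y}\leq e^{r_\de(k_2-k_1+1)}$ combine to give $e^{-x+y}|p-k_3 D\de^2 e^{r_\de(k_1-k_2)}|\leq De^{r_\de}\de^2\leq D\de^{3/2}$ for $\de$ small, using the scaling $e^{r_\de}\de^{1/2}\to 0$; together with $e^{-x}\leq\de^6$, the kernel is thus bounded by $K'\de\,\psi(s-x,t-y,e^{y}(v-p))$ for some constant $K'$, and Young's inequality (Remark \ref{remark_pre}(3)) yields the operator-norm bound $K'\de\|\psi\|_1$, uniformly in $k$.

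The main obstacle is the careful bookkeeping of the exponential factors $e^{r_\de(k_1-k_2)}$ across $k\in\Z^3$ to guarantee $k$-uniformity of $K'$; the decisive input is the scaling $e^{r_\de}\de^{1/2}\to 0$ from the definition of $r_\de$. Otherwise the argument runs parallel to the $S^*$ case, with the linear dependence on $p$ in the $R^*$ kernel replacing the quadratic $p^2$-dependence handled in Proposition \ref{mde2issmq}.
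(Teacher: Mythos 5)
Your proposal is correct and follows essentially the same route as the paper: insert $P_{\de,D,2,k}$ via Proposition \ref{kmulsR}, bound each summand's kernel by $\va(s-x,t-y,e^{y}(v-p))\bigl(e^{-x+y}D\de^2e^{r_\de(k_1-k_2)}+e^{-x}\bigr)e^{y}\leq C\de\,\va(\cdot)e^{y}$ using $e^{-x+y}e^{r_\de(k_1-k_2)}\leq e^{r_\de}$ and $e^{r_\de}\de^{1/2}\to 0$, apply Young's inequality, and sum over $k$ by the disjointness argument of Proposition \ref{sumsofop}. The only difference is that you spell out the $k$-uniformity bookkeeping and the appeal to Proposition \ref{sumsofop}, which the paper leaves as ``conclude as in the preceding cases.''
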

\begin{proof}
Let $ F\in L^1_c $. From Proposition \ref{kmulsR}, we know that for $ \de $ small
enough, $ P_{\de,D,2, k}\circ \si_{\ve R^*}(F)\circ
M_{\de,D,2,k}=\si_{\ve R^*}(F)\circ M_{\de,D,2,k}$ for $k\in \Z^3 $. Hence, it suffices to show that
\begin{eqnarray*}
 \noop{
P_{\de,D,2, k}\circ (\si_{\ve R^*}(F)\circ M_{\de,D,2,k}-\si_{\ve \de^2 k_3
De^{r_\de(k_1-k_2)}Q^*}(F)\circ M_{\de,D,2,k})}\leq L\de,
\end{eqnarray*}
for some constant $ L>0 $ independent of $ \de $.

The kernel function $ F_{\ve,\de,D,2,k} $ of the operator $P_{\de,D,2,
k}\circ (\si_{\ve R^*}(F)\circ M_{\de,D,2,k}-\si_{\ve \de^2 k_3
De^{r_\de(k_1-k_2)}Q^*}(F)\circ
M_{\de,D,2,k}) $ is given by:
\begin{eqnarray*}
 F_{\ve,\de,D,2,k}((s,t,v),(x,y,p)) &= & \left(\wh F^{\h}(s-x,t-y,e^{y}(v-
p),-{e^{-x+y}p\ve }Q^*+e^{-x}\ve R^*)\right.\\
&&- \left.\wh F^{\h}(s-x,t-y, e^{y}(v- p),e^{-x+y}\de^2 k_3
De^{r_\de(k_1-k_2)}\ve Q^*)\right)\\
&&\, 1_{S_{\de,D,2,k}}(x,y,p)1_{P_{\de,D,2,k}}(s,t,v)e^y.
\end{eqnarray*}
Hence,
\begin{eqnarray*}
&&\vert  F_{\ve,\de,D,2,k}((s,t,v),(x,y,p))\vert\\
&\leq& \vert \wh F^{\h}(s-x,t-y,v-e^y p,-{e^{-x+y}p}\ve Q^*+e^{-x}\ve R^*\\
&&- \wh F^{\h}(s-x,t-y, v-e^y p,e^{-x+y}\de^2 k_3
De^{r_\de(k_1-k_2)}\ve Q^*)\vert \\
&&\, 1_{S_{\de,D,2,k}}(x,y,p)1_{P_{\de,D,2,k}}(s,t,v)e^y\\
&\leq&\va(s-x,t-y,v-e^y p)\\
&&(e^{-x+y}e^{r_\de(k_1-k_2)}D\de^2+e^{-x})e^y1_{S_{\de,D,2,k}}(x,y,p)1_{P_{
\de,D ,2,k}}(s,t,v)\\
&\leq&\de \va(s-x,t-y,e^{y}(v-
p))e^{y}1_{S_{\de,D,2,k}}(x,y,p)1_{P_{\de,D,2,k}}(s,t,v),
\end{eqnarray*}
for $ \de $ small enough, where $\va$ is as in Remark \ref{remark_pre}. We conclude as in the preceding cases.

\end{proof}

\begin{corollary}\label{zerobosRstar}
Let $ a\in C^*(G) $. Then
\begin{eqnarray*}
 \lim_{\de\to 0}\dis\big(\rh_{\ve R^*}\big(\si_{\ve
R^*}(a)-\si_{\ve,\de,D,2}(a)\big), C_0(\R,\K)\big)=0.
\end{eqnarray*}
\end{corollary}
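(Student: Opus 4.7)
The plan is to follow the argument of Corollary \ref{zerobos} step by step, with the triple of $R^*$-analogues---Propositions \ref{mde2issmr}, \ref{Rdemulcomrver} and \ref{eqnmr}---substituting for their $S^*$-counterparts. First I would reduce the claim to the dense subalgebra $L^1_c \subset C^*(G)$: this is legitimate because $\noop{\sigma_{\varepsilon,\delta,D,2}(F)}$ is bounded uniformly in $\delta$ by Proposition \ref{estas usalR} (in terms of $\noop{\sigma_{\varepsilon Q^*}(F)}$), and $\rho_{\varepsilon R^*} \circ \sigma_{\varepsilon R^*}$ is a $*$-homomorphism from $C^*(G)$, so a standard three-$\varepsilon$ argument shows that validity on $L^1_c$ forces validity on all of $C^*(G)$.

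For $F \in L^1_c$, I would split
\[ \sigma_{\varepsilon R^*}(F) = \sigma_{\varepsilon R^*}(F) \circ M_{\delta,1} + \sigma_{\varepsilon R^*}(F) \circ (1 - M_{\delta,1}) \]
and apply $\rho_{\varepsilon R^*}$. Combining \eqref{rhyuit} with the intertwining identity of Proposition \ref{eqnmr} turns the first summand into
\[ \rho_{\varepsilon R^*}\bigl(\sigma_{\varepsilon R^*}(F) \circ M_{\delta,1}\bigr) = \tau_{\varepsilon R^*}(F) \circ \ol M_{\delta,1}, \]
which belongs to $C_0(\R, \K)$ by Proposition \ref{Rdemulcomrver}. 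Therefore
\[ \dis\bigl(\rho_{\varepsilon R^*}(\sigma_{\varepsilon R^*}(F) - \sigma_{\varepsilon,\delta,D,2}(F)), C_0(\R, \K)\bigr) \leq \noop{\sigma_{\varepsilon R^*}(F) \circ (1 - M_{\delta,1}) - \sigma_{\varepsilon,\delta,D,2}(F)}, \]
and Proposition \ref{mde2issmr} bounds the right-hand side by $K(D, F)\, \delta$. Sending $\delta \to 0$ gives the conclusion on $L^1_c$, and the density argument of the previous paragraph transports it to general $a \in C^*(G)$.

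I do not anticipate any real obstacle: the statement is essentially a formal consequence of the three preparatory propositions, assembled exactly as in the proof of Corollary \ref{zerobos}. The only delicate point worth checking is that the intertwiner $U_{\varepsilon R^*}$ indeed converts the multiplication operator $M_{\delta,1}$ on $L^2(G/H, \chi_{\varepsilon R^*})$ into the fibrewise multiplication $\ol M_{\delta,1}$ on the direct integral $\int_\R^\oplus L^2(\R^2)\, db^*$; this works because the cut-off $1_{\{e^{-x} > \delta^6\}}$ depends only on the variable $x$ and is therefore unaffected by the partial Fourier transform in $y$ that defines $U_{\varepsilon R^*}$, which is exactly the content of Proposition \ref{eqnmr}.
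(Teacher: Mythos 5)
Your proposal is correct and is essentially the paper's own argument: the paper proves this corollary by declaring it "similar to that of Corollary \ref{zerobos}", and your write-up simply carries out that analogy explicitly, assembling Propositions \ref{mde2issmr}, \ref{Rdemulcomrver} and \ref{eqnmr} exactly as the $S^*$-versions are assembled there, with the density/uniform-boundedness step (via Proposition \ref{estas usalR}) made explicit rather than implicit.
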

\begin{proof}
The proof is similar to that of Corollary  \ref{zerobos}.
 \end{proof}

\subsection{The boundary condition for $ \Om_{\ve P^*+\nu Q^*} $, $ \ve=\pm 1,\nu=\pm 1 $}\label{dfoo}

The boundary of the orbit $\OM_{\ve P^*+\nu Q^*}$ is the union of the orbits
$x^*X^*+\nu Q^*,y^* Y^*+\ve P^*$ for all $y^*,x^*\in\R$, and of the set of characters  (see (\ref{pstqst})).
We take the coordinates on $ G $ coming from the basis $ \{Y:=2A,X:=A+B,P,Q,R,S\} $.
This gives us the bracket relations:
\begin{eqnarray*}
[X,P] =0,\, [X,Q]=Q,\, [Y,P] =P,\, [Y,Q] =0 .
\end{eqnarray*}
The irreducible representation $ \pi_{\ve P^*+\nu Q^*} $ is realized as $ \pi_{\ve P^*+\nu Q^*}:=\ind_{L}^G \ch_{\ve P^*+\nu Q^*}$, where $ L:=\exp(\ll):=\exp(\text{span}\{P,\h\} )$.

\begin{definition}\label{}
\begin{enumerate}\label{}
\item Let $ \m:=\textrm{span}\{Y,\ll\} $ and $ M:=\exp( \m) $ and let:
\rm   \begin{eqnarray*}
\si_{\nu Q^*}:= \ind_ L ^G \ch_{\nu Q^*}\simeq \ta_{\nu Q^*}:=\int_\R ^\oplus\pi_{x^* X^* +\nu Q^*}dx^*.
\end{eqnarray*}
An intertwining operator $ U_{\nu Q^*} $ for this equivalence is given by
\begin{eqnarray*}
( U_{\nu Q^*}(\xi)(x^*))(g):= \int_\R \xi(g\exp(x Y))e^{{-2\pi i x^*x}}dx \, \text{ for }\, \xi\in L^2(G/L, \ch_{\nu Q^*}), g\in G, x^*\in\R.
\end{eqnarray*}
%Let $C(\R,\B)  $ be the $ C^* $-algebra of all continuous uniformly bounded mappings from $ \R $ into the algebra $ \B $ of bounded operators on the Hilbert space $ L^2(\R) $  containing the ideal $ C_0(\R,\K) $ of all continuous mappings from $ \R $ into the algebra $ \K $ of  compact operators on  $ L^2(\R) $ vanishing at infinity.

The unitary mapping $ U_{\nu Q^*} $ induces an  isomorphism  $ \rh_{\nu Q^*} $ from the algebra $  B(L^2(G/L,\ch_{\nu Q^*})) $ onto $ B( \int_\R ^\oplus B(L^2(G/M,\ch_{x^*X^*+\nu Q^*}))dx^*)$. This homomorphism is defined by
\begin{eqnarray*}
 \rh_{\nu Q^*}(\si_{\nu Q^*}(a)) &= & U_{\nu Q^*}\circ \si_{\nu Q^*}(a)\circ U_{\nu Q^*}^*\\
&=&\int_\R^\oplus \pi_{x^*X^*+\nu Q^*}(a)dx^* \, \text{ for } \,  a\in C^*(G).
\end{eqnarray*}
\item  Let $ \k:=\textrm{span}\{X,\ll\} $ and $K:=\exp( \k) $ and let:
\rm   \begin{eqnarray*}
\si_{\ve P^*}:= \ind_ L ^G \ch_{\ve P^*}\simeq \ta_{\ve P^*}:=\int_\R ^\oplus\pi_{y^* Y^* +\ve P^*}dy^*.
\end{eqnarray*}
An intertwining operator $ U_{\ve P^*} $ for this equivalence is given by
\begin{eqnarray*}
( U_{\ve P^*}(\xi)(y^*))(g):= \int_\R \xi(g\exp(y X))e^{{-2\pi i y^*y}}dy \, \text{ for }\, \xi\in L^2(G/L, \ch_{\ve P^*}), g\in G, x^*\in\R.
\end{eqnarray*}

The unitary mapping $ U_{\ve P^*} $ induces an  isomorphism  $ \rh_{\ve P^*} $ from the algebra $  B(L^2(G/L,\ch_{\ve P^*})) $ onto $ B( \int_\R ^\oplus B(L^2(G/K,\ch_{y^*Y^*+\ve P^*}))dy^*)$. This homomorphism is defined by
\begin{eqnarray*}
 \rh_{\ve P^*}(\si_{\ve P^*}(a)) &= & U_{\ve P^*}\circ \si_{\ve P^*}(a)\circ U_{\ve P^*}^*\\
&=&\int_\R^\oplus \pi_{y^*Y^*+\ve P^*}(a)dy^*  \, \text{ for } \,  a\in C^*(G).
\end{eqnarray*}
\item  Let $\si_0:=\ind_L ^G \chi_0 $.
 \end{enumerate}

\end{definition}

\begin{definition}\label{PstQst}
\begin{enumerate}\label{}
\item Let $ S_{\de,1}:=\{(x,y); e^{-x}>\de^6,e^{{-y}}>\de^6\} $,
\item $ S_{\de,2}:=\{(x,y); e^{-x}>\de^6,e^{{-y}}\leq\de^6\} $,
\item $ S_{\de,3}:=\{(x,y); e^{-x}\leq\de^6,e^{{-y}}>\de^6\} $,
\item $ S_{\de,4}:=\{(x,y); e^{-x}\leq\de^6,e^{{-y}}\leq\de^6\} $,
\end{enumerate}
and as usual we denote by $ M_{\de,i} $ the multiplication operator on $ L^2(\R^2) $ with the function $1_{S_{\de,i}} $ for $1 \leq i \leq 4$.
\end{definition}

\begin{proposition}\label{PQone com}
For every $ a\in C^*(G) $, the operator $ \pi_{\ve P^*+\nu Q^*}(a)\circ M_{\de,1} $ is compact.
\end{proposition}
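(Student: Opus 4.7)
The strategy is to show that $\pi_{\ve P^*+\nu Q^*}(F)\circ M_{\de,1}$ is in fact Hilbert--Schmidt for every $F$ in the dense subspace $L^1_c\subset C^*(G)$, and then to extend to general $a\in C^*(G)$ by norm density: since compact operators form a norm-closed subspace of the bounded operators on $L^2(\R^2)$ and $\noop{\pi_f(a)\circ M_{\de,1}}\leq \no a$, compactness passes to the norm limit. This mirrors the pattern used in Proposition \ref{Rdemulcom}.

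For $F\in L^1_c$ I would write down the integral kernel of $\pi_f(F)$ in the $(x,y)$-coordinates of $G/L\simeq \R^2$. Since in the basis $\{X,Y,P,Q,R,S\}$ we have $[Y,P]=P$ and $[X,Q]=Q$ with $X$ acting trivially on $P$ and $Y$ trivially on $Q$, the adjoint action on $f\res{\mathfrak l}$ gives $\ve e^{-(x+y)}P^*+\nu e^{-x}Q^*$, so the formula \eqref{kerdfourrone} adapted to the new basis yields
\begin{eqnarray*}
\mathcal K((t,u);(x,y))
&=& \wh{F^{\mathfrak l}}\bigl(E_0(t-x,u-y)\bigr)\bigl(\ve e^{-(x+y)}P^*+\nu e^{-x}Q^*\bigr)\,1_{S_{\de,1}}(x,y).
\end{eqnarray*}
I would then use the iterated structure
\begin{eqnarray*}
\wh{F^{\mathfrak l}}(g_0)(\alpha P^*+\beta Q^*+\gamma R^*+\delta S^*)
&=& \int_\R \wh{F^{\mathfrak h}}\bigl(g_0\exp(pP)\bigr)(\beta Q^*+\gamma R^*+\delta S^*)\,e^{ip\alpha}\,dp,
\end{eqnarray*}
together with the defining properties of $L^1_c$: $\wh{F^{\mathfrak h}}$ is $C^\iy$ and compactly supported on $G/H\times\mathfrak h^*$, so $\wh{F^{\mathfrak l}}(g_0)(l)$ is compactly supported in $g_0\in G/L$, compactly supported in $l\res{\mathfrak h}$, and Schwartz in $\alpha=l(P)$ as a one-dimensional Fourier transform of a compactly supported smooth function, with Schwartz estimates uniform in the remaining variables.

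Finally I would estimate the Hilbert--Schmidt norm. After translating $t\mapsto t+x$ and $u\mapsto u+y$, the $(t,u)$-integral is confined to the compact $g_0$-support of $\wh{F^{\mathfrak l}}$ and contributes a finite factor, leaving an integral of the form $\int_{S_{\de,1}} g(e^{-(x+y)},e^{-x})\,dx\,dy$ where $g(\alpha,\beta)$ is Schwartz in $\alpha$ and compactly supported in $\beta$. The compact support in $\beta=e^{-x}$ forces $x$ into a compact interval; on this interval the cutoff $e^{-y}>\de^{6}$ from $S_{\de,1}$ bounds $y$ above, while the Schwartz decay in $\alpha=e^{-(x+y)}$, which blows up as $y\to -\iy$, makes the $y$-integral absolutely convergent. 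Hence $\pi_f(F)\circ M_{\de,1}$ is Hilbert--Schmidt, and therefore compact, for every $F\in L^1_c$, and compactness for all $a\in C^*(G)$ follows by density.

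The only delicate point is the Schwartz decay of $\wh{F^{\mathfrak l}}$ in the $P^*$-direction; this is what prevents the $y$-integral from diverging at $-\iy$, and it is the only place in the argument where the specific form of $S_{\de,1}$ (as opposed to a substantially larger region) actually enters. Everything else follows routinely from the compact-support properties built into the definition of $L^1_c$.
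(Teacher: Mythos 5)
Your overall strategy --- prove the operator is Hilbert--Schmidt for $F$ in a dense subspace, then pass to norm limits using $\noop{\pi_{\ve P^*+\nu Q^*}(a)\circ M_{\de,1}}\leq \no a$ and the norm-closedness of the compacts --- is exactly the paper's. Where you differ is in the dense subspace: you keep the $H$-based space $L^1_{c,\h}$ of Remark \ref{remark_pre} and compensate for the lack of compact support in the $P^*$-direction by the Schwartz decay of the one-variable Fourier transform $\wh F^{\ll}(g_0)(\al P^*+\cdot)=\int_\R \wh F^{\h}(g_0\exp(pP))(\cdot)e^{ip\al}dp$, whereas in this subsection the paper redefines $L^1_c:=\{F\in L^1(G):\wh F^{L}\in C_c(G/L,\ll^*)\}$, so that its kernel $\wh F^{L}(s-x,t-y)(e^{-x}P^*+e^{-y}Q^*)1_{S_{\de,1}}(x,y)$ has compact support in all variables at once (the support in $\ll^*$ bounds $x,y$ from below, the cutoff $S_{\de,1}$ bounds them from above) and Hilbert--Schmidtness is immediate. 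Your variant is legitimate --- both subspaces are dense --- it is just longer, since it has to trade compact support in one dual direction for rapid decay.

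Two points in your region analysis need repair, though both are easily fixed. First, the bookkeeping of where the bounds come from is off: compact support of $\wh F^{\ll}$ in the $\h^*$-variables only controls the $Q^*$-coefficient, hence gives a \emph{lower} bound on $x$ (namely $e^{-x}\leq M$); it does not put $x$ in a compact interval. The upper bounds on both $x$ and $y$ come from $S_{\de,1}$, and \emph{both} halves of its definition are needed: if you drop the constraint $e^{-x}>\de^6$, then along directions with $x\to+\iy$, $y\to-\iy$ and $x+y$ bounded the $P^*$-argument stays bounded and the $Q^*$-argument tends to $0$ (still inside the support), so neither the Schwartz decay nor the compact support produces any decay and the Hilbert--Schmidt integral diverges. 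Consequently your closing remark, that the only place the specific form of $S_{\de,1}$ enters is the convergence of the $y$-integral, is wrong; but since the operator is composed with $M_{\de,1}$ anyway, you only need to invoke the $e^{-x}>\de^6$ half as well, and the estimate closes. Second, a minor slip: with the brackets you quote ($[X,P]=0$, $[Y,P]=P$, $[X,Q]=Q$, $[Y,Q]=0$) the $P^*$-coefficient in the kernel should be a single exponential $e^{-y}$, not $e^{-(x+y)}$; this does not affect the structure of the argument, but the exponents should be made consistent with the basis you declare.
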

\begin{proof}
Let $ F\in L^1_c:=\{F\in L^1(G), \wh F^L\in C_c(G/L,\ll^*)\} $. The kernel function  $ F_{\ve,\nu} $ of the operator $\pi_{\ve P^*+\nu Q^*}(F)\circ M_{\de,1}$ is given by
\begin{eqnarray*}
F_{\ve,\nu}((s,t), (x,y))=(\wh F^L(s-x,t-y), e^{-x}P^*+e^{-y}Q^*)1_{S_{\de,1}}(x,y).
\end{eqnarray*}
Since $ F\in L^1_c $ is of compact support in all variables and therefore $ \pi_{\ve P^*+\nu Q^*}(F)\ci M_{\de,1} $ is Hilbert-Schmidt. Since $ L^1_c $ is dense in $ C^*(G) $, we have that $ \pi_{\ve P^*+\nu Q^*}(a)\circ M_{\de,1} $ is compact for every $ a\in C^*(G) $.
 \end{proof}

\begin{definition}\label{siPQ}
\rm
For $ F\in L^1_c $ and $ \de>0 $, let
\begin{eqnarray*}
\si_{\ve,\nu,\de}(F):= \si_{\nu Q^*}(F)\circ M_{\de,2}+\si_{\ve P^*}(F)\circ M_{\de,3}+\si_{0}(F)\circ M_{\de,4}\in \B(L^2(\R^2)).
\end{eqnarray*}
\end{definition}

The kernel function $ F_{\ve,\nu,\de}$ of this operator is given by
\begin{eqnarray*}\label{defkoPQ}
F_{\ve,\nu,\de}((s,t),(x,y))
 &=&\wh F^{L}((s-x,t-y), e^{-x}P^*)1_{S_{\de,2}}(x,y)\\
 && +\wh F^{L}((s-x,t-y),e^{-y}Q^*)1_{S_{\de,3}}(x,y)+\wh F^{L}((s-x,t-y),0)1_{S_{\de,4}}(x,y).
\end{eqnarray*}

Similar to previous cases, we have the following propositions and corollary.

\begin{proposition}\label{estas usalR}
For every $ F\in L^1_c $ and every $ \de>0 $,
\begin{eqnarray*}
\noop{\si_{\ve,\nu,\de}(F)}\leq  \noop{\si_{\ve P^*}(F)}+\noop{\si_{\nu Q^*}(F)}
+\noop{\si_{0}(F)}.
\end{eqnarray*}
\end{proposition}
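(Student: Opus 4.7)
The plan is to derive the inequality directly from the triangle inequality applied to the operator norm, exploiting only the fact that each $M_{\de,i}$ is a norm-one projection on $L^2(\R^2)$. Since $\si_{\ve,\nu,\de}(F)$ is defined as a sum of three operators, one for each of the multiplication projections $M_{\de,2},M_{\de,3},M_{\de,4}$, the first step is to write
\begin{eqnarray*}
\noop{\si_{\ve,\nu,\de}(F)}
\leq \noop{\si_{\nu Q^*}(F)\circ M_{\de,2}}
    +\noop{\si_{\ve P^*}(F)\circ M_{\de,3}}
    +\noop{\si_{0}(F)\circ M_{\de,4}}.
\end{eqnarray*}

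Next, I observe that each $M_{\de,i}$ is the multiplication operator with the characteristic function of a measurable subset $S_{\de,i}\subset\R^2$, hence is an orthogonal projection on $L^2(\R^2)$ of operator norm $\leq 1$. Under the identifications $G/L\simeq\R^2$ used to realise $\si_{\nu Q^*}(F)$, $\si_{\ve P^*}(F)$ and $\si_{0}(F)$ all in $\B(L^2(\R^2))$ (as in Definition \ref{siPQ}), submultiplicativity of the operator norm gives $\noop{\si_{\bullet}(F)\circ M_{\de,i}}\leq \noop{\si_{\bullet}(F)}\cdot\noop{M_{\de,i}}\leq \noop{\si_{\bullet}(F)}$ for each of the three terms. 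Putting this together with the triangle inequality yields the claimed bound.

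I expect no real obstacle: the statement is of the same elementary type as the opening inequality in Proposition \ref{ve-sign} and is strictly simpler than Proposition \ref{sumsofop}, because one does not even need disjointness of the sets $S_{\de,i}$ to get the sum-of-norms estimate — disjointness would only be needed if one wanted a sharper $\max$-type bound. The proof is therefore a single line of triangle inequality followed by the contractivity of the characteristic function projections.
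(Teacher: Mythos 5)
Your argument is correct and is essentially the argument the paper intends: the proof is omitted there as ``similar to previous cases,'' and those cases (e.g.\ the first estimate in Proposition \ref{ve-sign}) bound such sums exactly as you do, via the triangle inequality together with the fact that each $M_{\de,i}$, being multiplication by a characteristic function on $L^2(\R^2)$, has operator norm at most $1$. Your remark that the disjointness of the sets $S_{\de,i}$ (and hence Proposition \ref{sumsofop}) is not needed for a finite sum-of-norms bound is also accurate; that device is only required for the infinite sums over $k$ appearing in the other boundary conditions.
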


\begin{proposition}\label{mde2issmr}
For any $ F\in L^1_c $, there exists  a constant $ K=K(F)>0 $ such that
\begin{eqnarray*}
 \noop{\pi_{\ve P^*+\nu Q^*}(F)\circ (1- M_{\de,1})-\si_{\ve,\nu,\de}(F)} \leq K\de
\quad \text{for} \quad \de>0.
\end{eqnarray*}
\end{proposition}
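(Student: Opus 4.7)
The plan is to split the difference along the partition $1 - M_{\de,1} = M_{\de,2} + M_{\de,3} + M_{\de,4}$ provided by Definition \ref{PstQst}. Writing
\[
\pi_{\ve P^* + \nu Q^*}(F)\circ(1 - M_{\de,1}) - \si_{\ve,\nu,\de}(F) = \sum_{i=2}^4 \bigl(\pi_{\ve P^* + \nu Q^*}(F) - T_i\bigr)\circ M_{\de,i},
\]
with $T_2 = \si_{\nu Q^*}(F)$, $T_3 = \si_{\ve P^*}(F)$ and $T_4 = \si_0(F)$, I would analyze each summand at the level of its integral kernel. The kernel of $\pi_{\ve P^* + \nu Q^*}(F)$ has the form $\wh F^L((s-x,t-y),\,\ve e^{-y}P^* + \nu e^{-x}Q^*)$, while the kernel of $T_i$ is the same expression with the coefficient of $P^*$, the coefficient of $Q^*$, or both, replaced by $0$, according to which character survives in the limit on $S_{\de,i}$.

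Since $F \in L^1_c$, the partial Fourier transform $\wh F^L(s,q)$ is smooth with compact support in both variables. A mean-value estimate in the $\ll^*$-variable then yields
\[
\bigl|\wh F^L(s,q_1) - \wh F^L(s,q_2)\bigr| \leq \no{q_1 - q_2}\cdot\va(s)
\]
for some fixed $\va \in C_c(G/L)$ depending only on $F$. On each of $S_{\de,2}, S_{\de,3}, S_{\de,4}$ the dropped coefficient is bounded in modulus by $\de^6$ by the very definition of the sets, so each of the three kernel differences is pointwise dominated by $2\de^6\,\va(s-x,t-y)\,1_{S_{\de,i}}(x,y)$. Applying the Young-type estimate of Remark \ref{remark_pre}(3) then bounds each summand in operator norm by $2\de^6\no\va_1$.

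Summing the three contributions and using $\de^6 \leq \de$ for $0 < \de \leq 1$, together with the uniform bound $\noop{\pi_{\ve P^*+\nu Q^*}(F)} + \noop{\si_{\ve,\nu,\de}(F)} \leq C(F)$ coming from Proposition \ref{estas usalR} and the trivial inequality $\noop{\pi_f(F)}\leq\no F_1$ (to handle $\de \geq 1$), yields the desired bound with $K = K(F)$. The argument parallels the $\R_{\ve R^*}$-analogue in Subsection \ref{cora} and is in fact simpler, since the four-set partition here already separates the boundary strata and no auxiliary decomposition indexed by $k \in \Z^3$ is required. The main point requiring care---and the only real obstacle---is verifying that the three limit kernels obtained by zeroing the appropriate coefficients indeed coincide with those of $\si_{\nu Q^*}(F), \si_{\ve P^*}(F), \si_0(F)$ as declared in Definition \ref{siPQ}; once this identification is checked, the smoothness of $\wh F^L$ immediately delivers the estimate.
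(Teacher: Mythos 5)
Your proof is correct and follows essentially the same route as the paper: the paper likewise writes the kernel of $\pi_{\ve P^*+\nu Q^*}(F)\circ(1-M_{\de,1})-\si_{\ve,\nu,\de}(F)$ as a sum over $S_{\de,2},S_{\de,3},S_{\de,4}$ of differences of $\wh F^L$ evaluated at the full functional and at the functional with the small (at most $\de^6$) coefficient dropped, bounds each difference by $\va(s-x,t-y)\,\de^6$ with $\va\in C_c(\R^2)$ coming from $F\in L^1_c$, and concludes with the Young-type estimate of Remark \ref{remark_pre}. The identification of the limit kernels with those of $\si_{\nu Q^*}(F)$, $\si_{\ve P^*}(F)$ and $\si_{0}(F)$, which you flag as the remaining point, is exactly the content of Definition \ref{siPQ} and the displayed kernel formula following it.
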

\begin{proof}
Let $ F\in L^1_c $. The kernel function $ F_{\ve,\nu,\de,0} $ of the operator
$ \pi_{\ve P^*+\nu Q^*}(F)\circ (1- M_{\de,1})-\si_{\ve,\nu,\de}(F) $ is given by:
\begin{eqnarray*}
&&F_{\ve,\nu,\de,0}((s,t),(x,y))\\
&=&(\wh F^L((s-x,t-y),e^{-x}P^*+e^{-y}Q^*)-\wh F^L((s-x,t-y),e^{-x}P^*))1_{S_{\de,2}}(x,y)\\
& &+ (\wh F^L((s-x,t-y),e^{-x}P^*+e^{-y}Q^*)-\wh F^L((s-x,t-y),e^{-y}Q^*))1_{S_{\de,3}}(x,y)\\
& &+ (\wh F^L((s-x,t-y),e^{-x}P^*+e^{-y}Q^*)-\wh F^L((s-x,t-y),0))1_{S_{\de,4}}(x,y).
\end{eqnarray*}
Since $ F\in L^1_c $, there exists a function $ \va\in C_c(\R^2) $ such that
\begin{eqnarray*}
\val{\wh F((s,t),q)}\leq \val{\va(s,t)}\no q \quad \text{for } \, q\in\ll^*, (s,t)\in\R^2.
\end{eqnarray*}
Therefore,
\begin{eqnarray*}
\val{F_{\ve,\nu,\de,0}((s,t),(x,y))}\leq 3\val{\va(s-x,t-y)}\de^6.
\end{eqnarray*}

\end{proof}

\begin{corollary}\label{zerobosRstar}
Let $ a\in C^*(G) $. Then
\begin{eqnarray*}
 \lim_{\de\to 0}\dis\big(\pi_{\ve P^*+\nu Q^*}(a)-\si_{\ve,\nu,\de}(a), \K\big)=0.
\end{eqnarray*}
\end{corollary}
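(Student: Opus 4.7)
The plan is to combine the three preceding results: Proposition \ref{PQone com} (compactness of $\pi_{\ve P^*+\nu Q^*}(a)\circ M_{\de,1}$), Proposition \ref{estas usalR} (uniform boundedness of $\si_{\ve,\nu,\de}$), and Proposition \ref{mde2issmr} (norm approximation on the dense subset $L^1_c$).

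First, I would decompose, for $F\in L^1_c$ and $\de>0$,
\begin{eqnarray*}
\pi_{\ve P^*+\nu Q^*}(F)-\si_{\ve,\nu,\de}(F)
&=&\pi_{\ve P^*+\nu Q^*}(F)\circ M_{\de,1}\\
&&+\bigl(\pi_{\ve P^*+\nu Q^*}(F)\circ(1-M_{\de,1})-\si_{\ve,\nu,\de}(F)\bigr).
\end{eqnarray*}
By Proposition \ref{PQone com}, the first summand belongs to $\K$. By Proposition \ref{mde2issmr}, the operator norm of the second summand is bounded by $K\de$, hence goes to $0$. Consequently $\dis\bigl(\pi_{\ve P^*+\nu Q^*}(F)-\si_{\ve,\nu,\de}(F),\K\bigr)\to 0$ for every $F\in L^1_c$.

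Next, I would pass from $L^1_c$ to the whole C*-algebra $C^*(G)$ by a standard density argument. Proposition \ref{estas usalR} shows $\noop{\si_{\ve,\nu,\de}(F)}\leq\noop{\si_{\ve P^*}(F)}+\noop{\si_{\nu Q^*}(F)}+\noop{\si_{0}(F)}\leq 3\no F_{C^*(G)}$, so $\si_{\ve,\nu,\de}$ is a bounded linear map on $L^1_c$ with norm bound independent of $\de$, and therefore extends uniquely to a bounded linear map on $C^*(G)$ with the same uniform bound. Given $a\in C^*(G)$ and $\eta>0$, choose $F\in L^1_c$ with $\no{a-F}_{C^*(G)}<\eta$; then $\noop{\pi_{\ve P^*+\nu Q^*}(a)-\pi_{\ve P^*+\nu Q^*}(F)}\leq\eta$ and $\noop{\si_{\ve,\nu,\de}(a)-\si_{\ve,\nu,\de}(F)}\leq 3\eta$ uniformly in $\de$. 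Combined with the already-established $L^1_c$ estimate, this yields
$$
\limsup_{\de\to 0}\dis\bigl(\pi_{\ve P^*+\nu Q^*}(a)-\si_{\ve,\nu,\de}(a),\K\bigr)\leq 4\eta,
$$
and letting $\eta\to 0$ finishes the proof.

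This proof is essentially routine once Propositions \ref{PQone com}, \ref{estas usalR} and \ref{mde2issmr} are in hand; no step should be a real obstacle. The only mild subtlety is the need for the uniform-in-$\de$ operator norm bound on $\si_{\ve,\nu,\de}$, which is exactly what Proposition \ref{estas usalR} provides and which ensures that the $L^1_c$ approximation carries over to all of $C^*(G)$.
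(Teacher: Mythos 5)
Your argument is correct and is essentially the proof the paper intends (it omits it as ``similar to previous cases''): split off $\pi_{\ve P^*+\nu Q^*}(F)\circ M_{\de,1}$, which is compact by Proposition \ref{PQone com}, control the remainder in norm by $K\de$ via Proposition \ref{mde2issmr}, and pass from $L^1_c$ to $C^*(G)$ using the uniform bound of Proposition \ref{estas usalR}, exactly as in the earlier Corollary \ref{zerobos}. The only cosmetic remark is that $\si_{\ve,\nu,\de}(a)$ is already defined for all $a\in C^*(G)$, since $\si_{\ve P^*},\si_{\nu Q^*},\si_0$ are representations of $C^*(G)$, so no extension step is strictly needed.
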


\subsection{The boundary conditions for $ \R_{\ve P^*} $ and $\R_{\nu Q^*}$ }\label{ d2 }

We consider only the case $ \OM_{a^* A^*+\nu Q^*}$. The other cases are similar.

We take the coordinates on $ G $ coming from the basis $ \{A,B,P,Q,R,S\} $. Let
$L:=\exp(\ll)$ and $\ll:=\text{span}\{P,Q,R,S\}$.
Let $ \si_{0}:= \ind_ L^ G 1$ be the left regular representation of $ G $ on $L^2(G/L) $. This representation is equivalent to  $ \ta_{0}:=\oint_\R \pi_{a^* A^*+b^*B^*} da^* db^*$ acting on $ L^2(\R^2)=\oint_{R^2} \C da^*db^* $. Let $ U_{\nu Q^*} $ be the unitary equivalence given by
\begin{eqnarray*}
U_{\nu Q^*}(\xi)(a^*,b^*):=\int_{\R^2} \xi(\exp(a A)\exp(b B))e^{-2\pi i( a a^*+bb^*)}dadb,\ a^*,b^*\in \R.
\end{eqnarray*}
We see that the algebra $B(L^2(G/L,\ch_{\nu Q^*}))$ is mapped into $ C(\R^2)\subset \B(L^2(\R^2))  $ by the mapping
\begin{eqnarray*}
\rh_{\nu Q^*}(\xi):=U_{\nu Q^*}\circ \xi\circ U_{\nu Q^*}^*.
\end{eqnarray*}

\begin{definition}\label{sisdefp}
\rm For $ \de>0 $, let
\begin{enumerate}\label{}
\item $ S_{\de,1}:=\{(a,b)\in\R^2, e^{-b}>\de\} $,
\item $ S_{\de,2}:=\{(a,b)\in\R^2, e^{-b}\leq\de\} $,
\end{enumerate}
and $ M_{\de,i} $ denotes the multiplication operator on $ L^2(\R^2) $ with the function $1_{S_{\de,i}} $ for $i= 1, 2$.
For each $ F\in C^*(G)$, we define the linear operator $ \sigma_{\de,2}(F) $ on $ L^2(\R^2) $ by
\begin{eqnarray*}
 \si_{\de,2}(F):= \si_{0}(F)\circ M_{\de,2}.
\end{eqnarray*}
\end{definition}

\begin{proposition}\label{Rqstast}
Let $ a\in C^*(G) $. Then
\begin{eqnarray*}
 \lim_{\de\to 0}\dis((\si_{\nu Q^*}(a)\ci (1-M_{\de,1})-\si_{\de,2}(a)),C_0(\R,\K))=0 .
\end{eqnarray*}
\end{proposition}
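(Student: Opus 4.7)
The strategy follows the template of the earlier boundary-condition proofs in Subsections \ref{vesbs}, \ref{cora} and \ref{dfoo}, but in a much simpler form. The first move is to reduce to $F\in L^1_c$ by density in $C^*(G)$ and the uniform bounds $\noop{\si_{\nu Q^*}(F)},\noop{\si_0(F)}\leq \no F_1$. Since $1-M_{\de,1}=M_{\de,2}$, the quantity to be controlled is
\begin{eqnarray*}
T_\de(F) := (\si_{\nu Q^*}(F)-\si_0(F))\circ M_{\de,2}.
\end{eqnarray*}
As $0\in C_0(\R,\K)$ (after the $\rh_{\nu Q^*}$-identification of $B(L^2(G/L,\ch_{\nu Q^*}))$ with a subalgebra of $\int_\R^\oplus B(L^2(\R))\,da^*$), one has $\dis(T_\de(F),C_0(\R,\K))\leq\noop{T_\de(F)}$, so it will suffice to prove $\noop{T_\de(F)}\to 0$ as $\de\to 0$.

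Next I would perform the kernel computation. Since $\ll=\span{P,Q,R,S}$ is an ideal of $\g$ and $G/L\simeq \exp(\R A)\exp(\R B)\simeq\R^2$, a derivation fully analogous to \eqref{kerdfourrone} gives, for $\xi\in L^2(G/L,\ch_{\nu Q^*})$,
\begin{eqnarray*}
\si_{\nu Q^*}(F)\xi(E_0(t,u))=\int_{\R^2}\wh F^\ll(E_0(t-a,u-b))\big(\Ad^*E_0(a,b)(\nu Q^*)\res{\ll}\big)\,\xi(E_0(a,b))\,da\,db,
\end{eqnarray*}
and $\si_0(F)$ has the same form with $\nu Q^*$ replaced by $0$. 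By the coadjoint formulas \eqref{coadG} one has $\Ad^*E_0(a,b)(\nu Q^*)\res{\ll}=\nu e^{-b}Q^*$, so the kernel of $T_\de(F)$ is
\begin{eqnarray*}
K_\de((t,u),(a,b))=\big[\wh F^\ll(E_0(t-a,u-b))(\nu e^{-b}Q^*)-\wh F^\ll(E_0(t-a,u-b))(0)\big]\,1_{S_{\de,2}}(a,b).
\end{eqnarray*}

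The last step is the pointwise estimate. For $F\in L^1_c$ the partial Fourier transform $\wh F^\ll$ is $C^\iy$ with compact support, so a first-order Taylor expansion in the $\ll^*$-variable yields some $\va\in C_c(G/L)$ with $\val{\wh F^\ll(s,q)-\wh F^\ll(s,0)}\leq \no q\cdot\va(s)$ uniformly in $s$ and $q$. On $S_{\de,2}$ we have $\no{\nu e^{-b}Q^*}=e^{-b}\leq \de$, whence $\val{K_\de((t,u),(a,b))}\leq \de\,\va(t-a,u-b)$. Young's inequality (Remark \ref{remark_pre}(3)) then gives $\noop{T_\de(F)}\leq \de\no\va_1\to 0$, which establishes the claim for $F\in L^1_c$ and hence, by density, for every $a\in C^*(G)$; the case $\R_{\ve P^*}$ is handled symmetrically. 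I do not anticipate any real obstacle: unlike the cases of $\R_{\ve S^*}$ and $\R_{\ve R^*}$, where one needed a genuine compactness argument on $\ta_{\ve \cdot}(a)\circ \ol M_{\de,1}$, here the trivial character appearing in $\si_0$ lets the whole argument collapse to a single pointwise Lipschitz-type estimate feeding into Young's inequality.
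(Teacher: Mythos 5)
Your proof is correct and is essentially the paper's own argument: the paper handles this case exactly as you do, via the kernel-difference estimate $\val{\wh F^{\ll}(s,q)-\wh F^{\ll}(s,0)}\leq \no q\,\va(s)$ with $\no{\nu e^{-b}Q^*}=e^{-b}\leq\de$ on $S_{\de,2}$, followed by Young's inequality and density of $L^1_c$, in parallel with the norm estimates of the $\R_{\ve R^*}$ case it cites. Your observation that no compactness step (the analogue of Proposition \ref{Rdemulcomrver}) is needed is also right, since the statement already contains the factor $1-M_{\de,1}=M_{\de,2}$ and $0\in C_0(\R,\K)$, so the distance is dominated by the operator norm you estimate.
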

The proof is similar to that of the corresponding proof in Section \ref{cora}.

\section{The C*-algebra of $ G $}\label{cstarGdef}

%\begin{definition}\label{gagdef}
\rm   We recall that the space $\wh G$ has been identified in Definition \ref{defgais} with the disjoint union of the subset $\GA_j,j=0,\ldots, 6$, of $ \g^*$.
We let
\begin{eqnarray*}
 l^\iy(\wh G):= \{(\ph(\ga))_{\ga\in \wh G}; \ph(\ga)\in B(\H_{\pi_\ga}),
 \, \no{\ph}:=\sup_{\ga\in\wh G}\noop{\ph(\ga)}<\iy\}.
\end{eqnarray*}

\begin{definition}\label{resfi}
\rm   For  a subset $ \GA $ of $ \wh G $ and $ \ph\in l^\iy(\wh G) $, let
\begin{eqnarray*}
 \ph\res\GA := (\ph(\ga))_{\ga\in\GA}\in l^\iy(\GA).
\end{eqnarray*}
\end{definition}

\begin{definition}\label{deffourtr}
\rm  For $F\in C^*(G)$ and $\ga \in \wh G$, we define the Fourier transform $ \F_G: C^*(G)\to l^\iy(\wh G) $ by
\begin{equation}\label{fourdef}
\nonumber \F_G(F)(\ga):=
\pi_\ga(F)\in B(\H_{\pi_\ga}).
\end{equation}
%\end{definition}
Let
\begin{eqnarray*}
  E^*(G):= &\{\ph\in l^\iy(\wh G);  \text{ the mappings }\ga\mapsto \ph(\ga) \text{ are norm continuous and  }\\
&\quad \quad \textrm{vanish at infinity on the sets } \GA_i, i=1,2,4,5\}.
\end{eqnarray*}
\end{definition}

Then the  subset  $ E^*(G) $ is a C*-subalgebra of $ l^\iy(\wh G) $ containing $ \F_G(C^*(G)) $ by Section \ref{contcon}.
We can define the following representations of $ E^*(G) $ for $\ve= \pm 1$:
\begin{enumerate}\label{}
\item
$ \ta_{\ve S^*}(\ph):=\int_\R^\oplus \ph(b^* B^*+\ve S^*)db^*$ on the Hilbert space  $\int_\R^\oplus L^2(\R^2)db^* $. We have seen in Definition \ref{sisstdef} that the restriction of this representation to $ \F_G(C^*(G)) $ is equivalent to the representations $ \si_{\ve s^* S^*}=\ind_H^G \ch_{ \ve s^*S^*}$ for $s^*\in \R^+ $. We use now these equivalences to extend the representations $ \si_{\ve s^* S^*} $ to $ E^*(G) $.
\item
$\ta_{\ve R^*}(\ph):=\int_\R^\oplus \ph(b^*B^*+\ve R^*)db^*$ on the Hilbert space  $\int_\R^\oplus L^2(\R^2)db^* $. This gives us representations $ \si_{\ve R^*} $ of $ E^*(G) $ (which coincides with $\ind_{H}^G \ch_{\ve R^*} \textrm{ on }C^*(G)$).

\item
$\ta_{\ve P^*}(\ph):=\int_\R^\oplus \ph(x^*(\frac{A^*+B^*}{2})+ \varepsilon P^*)dx^*$ on the Hilbert space  $\int_\R^\oplus L^2(\R)db^* $. This gives us representations $ \si_{\ve P^*}$ of $ E^*(G) $ (which coincides with $\ind_{[G,G] }^G \ch_{\ve P^*} \textrm{ on }C^*(G))  $.
\item
$\ta_{\ve Q^*}(\ph):=\int_\R^\oplus \ph(a^* A^*+ \ve Q^*)da^*$ on the Hilbert space  $\int_\R^\oplus L^2(\R)da^* $. This gives us representations $ \si_{\ve Q^*}$ of $ E^*(G) $, which coincides with $\ind_{[G,G] }^G \ch_{\ve Q^*} \textrm{ on }C^*(G) )$.
\end{enumerate}

This allows us  to define the following bounded linear mappings of $ E^*(G) $, where $ \ve=\pm1,\nu= \pm1$ and $\de>0 $:
\begin{enumerate}\label{}
\item (with the notations in Section \ref{vempe})
\begin{eqnarray*}
 \si_{\ve S^*+\ve Q^*,\de}(\ph):=
 \si_{\ve S^*}(\ph)\circ M_{\tilde \va_1}\circ M_{\de,1}+\si_{\ve S^*}(\ph)\circ M_{\de,2}+\si_{\de,3}(\ph),
\end{eqnarray*}
where $\si_{\de,3}(\ph):=\sum_{k\in\Z}N_{\de,k,3}\circ \si_{\ve(1+\frac{\de^4 k^2}{2})Q^*}(\ph)\circ M_{\de,3,k}$;

\item (with the notations in Section \ref{vemve})
\begin{eqnarray*}
 \si_{\ve S^*-\ve Q^*,\de}(\ph):=
 \sum_{i=1}^{2}\si_{\ve S^*}(\ph)\circ M_{\tilde \va_i}\circ M_{\de,i}+ \si_{\pm\sqrt{2}R^*-2Q^*}(\ph)\circ M_{\de,4,\pm}+ \si_{\de,3}(\ph),
\end{eqnarray*}
where $\si_{\de,3}(\ph):=\sum_{k\in\Z}N_{\de,3,k}\circ \si_{\ve(-1+\frac{\de^4 k^2}{2})Q^*}(\ph)\circ (M_{\de,3,k,+}+M_{\de,3,k,-})$;

\item  (with the notations in Section \ref{vesbs})
\begin{eqnarray*}
 \si_{\ve S^*,\de,D}(\ph) := \sum_{k\in\Z^3}P_{\de,D,3,k}\circ \si_{k_3^2 \de^3 D^2 e^{r_{\de}(k_1+k_2)}\ve Q^*}(\ph)\circ M_{\de,D,3,k};
\end{eqnarray*}
\item  (with the  notations in Section \ref{cora})
\begin{eqnarray*}
 \si_{\ve R^*,\de,D}(\ph) := \sum_{k\in\Z^3}P_{\de,D,2,k}\circ \si_{k_3 \de^2 D e^{r_{\de}(k_1-k_2)}\ve Q^*}(\ph)\circ M_{\de,D,2,k};
\end{eqnarray*}
\item  (with the notations in Section \ref{dfoo})
\begin{eqnarray*}
 \si_{\ve P^*,\nu Q^*,\de}(\ph):= \si_{\nu Q^*}(\ph)\circ M_{\de,2}+\si_{\ve P^*}(\ph)\circ M_{\de,3}+\si_{0}(\ph)\circ M_{\de,4};
\end{eqnarray*}
\item similarly  (with the notations of Section \ref{ d2 })
\begin{eqnarray*}
\si_{\ve P^*,\de}(\ph) =\si_{\ve P^*}\circ M_{\de,2},
\end{eqnarray*}
and the corresponding linear mapping
\begin{eqnarray*}
\si_{\nu Q^*,\de}(\ph) =\si_{\nu Q^*}\circ M_{\de,2}.
\end{eqnarray*}
\end{enumerate}

We define now the C*-subalgebra $ D^*( G) $ of $ l^{\iy}(\wh G) $. It will follow from Theorem \ref{aisdsta} that this subalgebra is  the Fourier transform of $ C^*(G) $.

\begin{definition}\label{}
\rm   Let $ D^*( G)$ be the subset of $ l^{\iy}(\wh G) $ consisting of all operator fields $ \ph $ contained in $ E^*(G)$ such that for $ \ve,\om =\pm1 $.
\begin{enumerate}\label{}
\item $$\lim_{\de\to 0}\dis(\ph(\ve(S^*+Q^*))-\sigma_{\ve S^*+ \ve Q^*,\de} (\ph),\K(L^{2}(\R^3)))= 0.$$

\item $$\lim_{\de\to 0}\dis((\ph(\ve(S^*-Q^*))-\si_{\ve S^*-\ve Q^*,\de}(\ph),\K(L^2(\R^3)))=0.$$

\item $$\lim_{\de\to 0}\dis(\rh_{\ve S^*}(\si_{\ve S^*}(\ph)-\si_{\ve S^*,\de,D}(\ph)), C_0(\R,\K))= 0,$$ this is,
 $$\lim_{\de\to 0}\dis(\ta_{\ve S^*}(\ph)-\rh_{\ve S^*}(\si_{\ve S^*,\de,D}(\ph)), C_0(\R,\K))= 0.$$

\item $$\lim_{\de\to 0}\dis(\rh_{\ve R^*}(\si_{\ve R^*}(\ph)-\si_{\ve R^*,\de,D}(\ph)), C_0(\R,\K))= 0,$$ this is,
 $$\lim_{\de\to 0}\dis(\ta_{\ve R^*}(\ph)-\rh_{\ve R^*}(\si_{\ve R^*,\de,D}(\ph)), C_0(\R,\K))= 0.$$

\item $$\lim_{\de\to 0}\dis((\ph({\ve P^*+\nu Q^*})-\si_{\ve P^*+\nu Q^*,\de}(\ph)),\K(L^{2}(\R^3)))=0.$$

\item $$\lim_{\de\to 0}\dis(\rh_{\ve P^*}(\si_{\ve P^*}(\ph)-\si_{\ve P^*,\de}(\ph)), C_0(\R,\K))= 0,$$ this is,
$$\lim_{\de\to 0}\dis(\ta_{\ve P^*}(\ph)-\rh_{\ve P^*}(\si_{\ve P^*,\de}(\ph)), C_0(\R,\K))= 0.$$

\item $$\lim_{\de\to 0}\dis(\rh_{\nu Q^*}(\si_{\nu Q^*}(\ph)-\si_{\nu Q^*,\de}(\ph)), C_0(\R,\K))= 0,$$ this is,
$$\lim_{\de\to 0}\dis(\ta_{\nu Q^*}(\ph)-\rh_{\nu Q^*}(\si_{\nu Q^*,\de}(\ph))), C_0(\R,\K))= 0.$$

\item The same conditions hold for the adjoint field $ \ph^* $.
\end{enumerate}
\end{definition}

We have seen in the Sections 5 and 6 that we can use Theorem \ref{aisdsta} to show the following result.

\begin{theorem}\label{disg}
The C*-algebra of the group $ G_6 $ is an almost $ C_0(\K) $-C*-algebra. In particular, the Fourier transform maps $ C^*(G_6) $ onto the subalgebra $ D^*(G_6)\subset l^\iy(\wh {G_6}) $.
\end{theorem}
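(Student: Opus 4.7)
The plan is to verify that $C^*(G_6)$ satisfies the two conditions of Definition \ref{norcontspec} relative to the filtration $S_0\subset S_1\subset\cdots\subset S_6=\wh{G_6}$ of Definition \ref{defgais}, and then to invoke Theorem \ref{aisdsta} to obtain at once that $\F_G(C^*(G_6))=D^*(G_6)$.

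Condition (1), the norm-continuity of $\ga\mapsto\F_G(a)(\ga)$ on each stratum $\GA_i$, is exactly Theorem \ref{contonR}. For Condition (2), I would on each $\GA_i$ take the uniformly bounded family $(\si_{i,\de})_{\de>0}$ already constructed in Section \ref{bound}: on $\GA_6$ the maps $\si_{\ve S^*\pm\ve Q^*,\de}$ of Subsections \ref{vemve} and \ref{vempe}; on $\GA_5$ and $\GA_4$ the maps $\si_{\ve S^*,\de,D}$ and $\si_{\ve R^*,\de,D}$ of Definitions \ref{sisstardef} and \ref{sidefr}; on $\GA_3$ the map $\si_{\ve,\nu,\de}$ of Definition \ref{siPQ}; and on $\GA_2,\GA_1$ the simpler maps based on Definition \ref{sisdefp}. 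The required approximation
$$
\lim_{\de\to 0}\dis\bigl(\si_{i,\de}(\F_G(a)|_{S_{i-1}})-\F_G(a)|_{\GA_i},\,C_0(\GA_i,\K(\H_i))\bigr)=0
$$
is then precisely the content of Propositions \ref{ve-sign}, \ref{veboco}, Corollaries \ref{zerobos}, \ref{zerobosRstar}, \ref{Rqstast}, together with the analogue for $\GA_3$; uniform boundedness in $\de$ comes from Propositions \ref{se3bou}, \ref{boundedsetw}, \ref{estas usalR} and the parallel estimates on the remaining strata.

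The key structural point, which makes each $\si_{i,\de}$ a map on $CB(S_{i-1})$ and not merely on $C^*(G_6)$, is that every one of its ingredients is of the form $\si_\ell(F)\circ M$ with $\ell\in S_{i-1}$ and $M$ a multiplication operator, so it depends on $F$ only through $\F_G(F)|_{S_{i-1}}$. This is what the list of definitions at the end of Section \ref{cstarGdef} formalises by extending each $\si_{i,\de}$ first to the larger algebra $E^*(G_6)$ containing $\F_G(C^*(G_6))$, and \emph{a fortiori} to $CB(S_{i-1})$.

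The main obstacle is the adjoint half of Condition (2), which Section \ref{bound} does not treat explicitly. Since each building block $\si_\ell$ is unitarily induced one has $\si_\ell(a)^*=\si_\ell(a^*)$, and the multiplication cutoffs are self-adjoint projections; hence the adjoint of $\si_{i,\de}(\F_G(a))$ is obtained by interchanging the roles of the left and right cutoffs and replacing $a$ by $a^*$, so the estimates of Section \ref{bound}, read in this symmetric form, yield the corresponding distance-to-compacts limit for $\F_G(a^*)$. With Definition \ref{norcontspec} thus fully verified, Theorem \ref{aisdsta} applies and concludes that $C^*(G_6)$ is an almost $C_0(\K)$-C*-algebra and that $\F_G$ is an isomorphism of $C^*(G_6)$ onto $D^*(G_6)$.
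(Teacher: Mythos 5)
Your proposal is correct and follows the same route as the paper, whose proof of this theorem is exactly the observation that the continuity condition is Theorem \ref{contonR}, the boundary conditions are the results of Section \ref{bound} packaged via the maps of Section \ref{cstarGdef}, and Theorem \ref{aisdsta} then applies. Your extra remark on the adjoint condition (using $\si_\ell(a)^*=\si_\ell(a^*)$, self-adjoint cutoffs, and invariance of the distance to $\K$ under adjoints) is a legitimate filling-in of a point the paper leaves implicit.
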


{{Junko Inoue}} {{Education Center,}}
{Organization for Supporting University Education, Tottori University},
 {Tottori 680-8550,  Japan}\\
{ E-mail: inoue@uec.tottori-u.ac.jp }

{{Ying-Fen Lin}} {{ Pure Mathematics Research Centre,}}
{{Queen's University Belfast},
 Belfast, BT7 1NN, U.K.}\\
{E-mail: y.lin@qub.ac.uk}

{{Jean Ludwig}} {{Universit\'e de Lorraine, Institut Elie Cartan de Lorraine,  UMR 7502, Metz, F-57045, France}}\\
{E-mail: jean.ludwig@univ-lorraine.fr}
 \vskip 0.5cm

\end{document}